\documentclass[11pt]{amsart}

\usepackage{amsmath}
\usepackage{amssymb}
\usepackage{amsthm}

\theoremstyle{definition}
\newtheorem{definition}{Definition}[section]
\newtheorem{example}[definition]{Example}
\newtheorem{proposition}[definition]{Proposition}
\newtheorem{theorem}[definition]{Theorem}
\newtheorem{lemma}[definition]{Lemma}

\newtheorem{remark}[definition]{Remark}

\usepackage{enumitem}
\usepackage{varioref} 


\usepackage[margin=2.0cm]{geometry}

\def\R{\mathbb R}


\newcommand{\inner}[2]{\langle #1,#2\rangle}
\newcommand{\innerbig}[2]{\left\langle #1,#2\right\rangle}
\newcommand{\norm}[1]{\|#1\|}

\DeclareMathOperator{\LspaceSymbol}{\mathbf{L}} 
\DeclareMathOperator{\WspaceSymbol}{\mathbf{W}} 

\newcommand{\Lspace}[1]{\LspaceSymbol^{#1}}
\newcommand{\Wspace}[1]{\WspaceSymbol^{#1}}
\newcommand{\Wzspace}[1]{\WspaceSymbol^{#1}_0}

\newcommand{\Lpspace}[1][p]{\Lspace{{#1}}}
\newcommand{\LGspace}[1][G]{\Lspace{{#1}}}
\newcommand{\WLGspace}[1][G]{{\Wspace{1}}\Lspace{{#1}}}
\newcommand{\WzLGspace}[1][G]{{\Wzspace{1}}\Lspace{{#1}}}

\newcommand{\Lpnorm}[2][p]{\norm{#2}_{\Lpspace[{#1}]}}
\newcommand{\LGnorm}[2][G]{\norm{#2}_{\LGspace[{#1}]}}
\newcommand{\LGastnorm}[2][{G^\ast}]{\norm{#2}_{\LGspace[{#1}]}}
\newcommand{\WLGnorm}[2][G]{\norm{#2}_{\WLGspace[{#1}]}}
\newcommand{\WzLGnorm}[2][G]{\norm{#2}_{\WzLGspace[{#1}]}}

\newcommand{\WLGnormother}[2][G]{\norm{#2}_{1,\WLGspace[{#1}]}}

\DeclareMathOperator{\ISymbol}{\mathcal{I}}

\title[Anisotropic Orlicz-Sobolev spaces of vector valued functions ]{
Anisotropic Orlicz-Sobolev spaces of vector valued functions and Lagrange equations
}

\author{M. Chmara}

\author{J. Maksymiuk}


\address{Department of Technical Physics and Applied Mathematics, Gda\'{n}sk University of
Technology, Narutowicza 11/12, 80-952 Gda\'{n}sk, Poland}
\email{mchmara@mif.pg.gda.pl, jmaksymiuk@mif.pg.gda.pl}

\begin{document}
\begin{abstract}
In this paper we study some properties of anisotropic Orlicz and anisotropic Orlicz-Sobolev spaces
of vector valued functions for a special class of G-functions. We introduce a variational setting
for a class of Lagrangian Systems. We give conditions which ensure that the principal part of
variational functional is finitely defined and continuously differentiable on Orlicz-Sobolev space.
\end{abstract}

\keywords{
anisotropic Orlicz space,
anisotropic Orlicz-Sobolev space,
Lagrange equations,
variational functional
}
\subjclass[2010]{46B10 ,  46E30 ,  46E40}

\maketitle


\section{Introduction}

In this paper we make some preliminary steps for variational analysis in anisotropic
Orlicz-Sobolev spaces of vector valued functions. We consider the Euler-Lagrange equation
\begin{equation}
    \label{eq:L}
    \frac{d}{dt}L_v(t,u(t),\dot{u}(t))=L_x(t,u(t),\dot{u}(t)),\quad t\in (a,b)
\end{equation}
where Lagrangian is of the form $L(t,x,v)=F(t,x,v)+V(t,x)$.

If $F(v)=\frac12|v|^2$ then the equation \eqref{eq:L} reduces to $\ddot{u}(t)+\nabla V(t,u(t))=0$.
One can
consider more general case $F(v)=\phi(|v|)$, where $\phi$ is convex and nonnegative.  In the above
cases $F$ does not depend on $v$ directly but rather on its norm $|v|$ and the growth of $F$ is the
same in all directions, i.e. $F$ has isotropic growth. Equation \eqref{eq:L} with Lagrangian
$L(t,x,v)=\frac{1}{p}|v|^p+V(t,x)$ has been studied by many authors under different conditions. The
classical reference is \cite{MawWil89}. The isotropic Orlicz-Sobolev space setting was considered in
\cite{AciBurGiuMazSch15}.

We are interested in anisotropic case. This means that $F$ depends on all components of $v$ not only
on $|v|$ and has different growth in different directions. A simple example of such function is
$F(v)=\sum_{i=1}^N |v_i|^{p_i}$ or $F(v)=\sum_{i=1}^N \phi_i(|v_i|)$, where $\phi_i$ are
N-functions. We wish to consider more general situation. We assume that $F\colon
[a,b]\times\R^N\times\R^N\to \R$ satisfies
 \begin{enumerate}[label=($F_\arabic*$)]
    \item $F\in C^1$,
    \item $ |F(t,x,v)|\leq a(|x|)(b(t)+G(v)), $
    \item $|F_x(t,x,v)|\leq a(|x|)(b(t)+G(v)),$
    \item $G^{\ast}(F_v(t,x,v))\leq a(|x|)(c(t)+G^{\ast}(\nabla G(v))).$
\end{enumerate}
where $a\in C(\R_+,\R_+)$, $b,c\in\LGspace[1](I,\R_+)$ and $G\colon \R^N\to \R$ is a G-function.
Conditions $(F_1)$--$(F_4)$ are direct generalization of standard growth conditions from
\cite{MawWil89} (see also \cite{AciBurGiuMazSch15}).
We show (see Theorem \ref{thm:IFabcC1}) that under these conditions the functional
$\ISymbol\colon \WLGspace\to \R$ given by
\[
\ISymbol(u)=\int_I F(t,u,\dot{u})\,dt
\]
is continuously differentiable.

We restrict our considerations to a special class of G-functions. Here $G\colon\R^n\to [0,\infty)$
is convex, $G(-x)=G(x)$, supercoercive, $G(0)=0$ and satisfies $\Delta_2$ and $\nabla_2$
conditions. We define the anisotropic Orlicz space to be
\[
    \LGspace(I,\R^N)=\{u\colon I\to \R^N\colon \int_I G(u)\,dt\leq \infty\}.
\]
The Orlicz space $\LGspace$ equipped with the Luxemburg norm
\[
\LGnorm{u}=\inf\left\{\alpha>0\colon \int_I G\left(\frac{u}{\alpha}\right)\, dt\leq 1 \right\}.
\]
is a reflexive Banach space. An important example of Orlicz space is classical Lebesgue $\Lpspace$
space, defined by $G(x)=\frac1p|x|^p$. In this case, the Luxemburg norm and the standard $\Lpspace$
norm are equivalent. Therefore, Orlicz spaces can be viewed as a straightforward generalization of
$\Lpspace$ spaces.

Properties of N-functions and of Orlicz spaces of real-valued functions has been studied in great
details in monographs \cite{KraRut61,RaoRen91,RaoRen02} and \cite{Ada75}. The standard references
for vector-valued case are \cite{Ska69_I, Ska69_II, Tru74} and \cite{DesGri01, Sch05} for
Banach-space valued functions. In \cite{Ska69_I,Ska69_II} author considers a class of G-functions
together with a uniformity conditions which, for example excludes the function $G(x)=\sum
|x_i|^{p_i}$ unless $1<p_1=\dots=p_N<\infty$. Moreover $G$ is not neccessairly assumed to be an even
function. As was pointed out in \cite{Sch05}, if $G$ is not even then $\LGspace$ is no longer a
vector space (see also \cite[Example 2.1]{DesGri01}).

Our strong conditions on $G$ allow us to work in Orlicz spaces without worry about some
technical difficulties arising in general case. For example,  it is well known that the set
$\LGspace(I,\R^N)$ is a vector space if and only if $G$ satisfies $\Delta_2$ condition. Otherwise
$\LGspace$ is only a convex set. Another difficulty is the convergence notion. In Lebesgue spaces
$\Lpnorm{u_n-u}\to 0$ means simply $\int |u_n-u|^p\to 0$. For arbitrary G-function $G$, convergence
in Luxemburg norm is not equivalent to $\int G(u_n-u)\,dt\to 0$ unless $G$ satisfies $\Delta_2$. The
$\Delta_2$ condition is also crucial for separability and reflexivity of $\LGspace$.

The main consequence of anisotropic nature of $G$ is the lack of monotonicity of the norm. It is no
longer true that $|u|\leq |v|$ implies $\LGnorm{u}\leq \LGnorm{v}$. In anisotropic case, standard
dominance condition $|u_n|\leq f$ does not implies convergence in $\LGspace$ norm and must be
replaced by $G(u_n)\leq f$ (see Theorem \ref{thm:dominatedconvergence}).

Following \cite{DesGri01} we show that for every $G$ we consider there exist $p,q\in(1,\infty)$ such
that $\LGspace[q]\hookrightarrow\LGspace[G]\hookrightarrow\LGspace[q]$. If $G(x)=\sum |x_i|^{p_i}$
then $\LGspace$ can be identified with the product of $\Lpspace[p_i]$ but in many cases an
anisotropic Orlicz Space is not equal to the space $\Lpspace[p_1]\times
\Lpspace[p_2]\times...\times \Lpspace[p_N]$  (see Example \ref{ex:LGnotLPtimesLP}).

To give a proper variational setting for equation \eqref{eq:L} we introduce a notion of an
anisotropic Orlicz-Sobolev space $\WLGspace$ of vector-valued functions. It is defined to be
\[
    \WLGspace(I,\R^N)=\{u\in \LGspace(I,\R^N)\colon \dot{u}\in \LGspace(I,\R^N)\}
\]
with the norm
\[
    \WLGnorm{u}=\LGnorm{u}+\LGnorm{\dot{u}}
\]
To the authors best knowledge there is no reference for the case of anisotropic norm and
vector-valued functions of one variable. The references for other cases are
\cite{AciBurGiuMazSch15,Tru74, FucOsm98, Cianchi00,Cianchi96,Cianchi99, Cletal04, DonTru71,
JaiLuk02, Le14}.

In  \cite{Tru74} and \cite{JaiLuk02} the  space $H^0(G,\Omega)$, $\Omega\subset\R^n$ is defined
as a completion of $C_0^1(\Omega,\R^n)$ under norm $\norm{u}_{H^0(G,\Omega)}=\norm{Du}_{G,\Omega}$.
It is classical result due to Trudinger $H^0(G,\Omega)\hookrightarrow L_A(\Omega)$, where $A$ is
some N-function (see also Cianchi \cite{Cianchi96}).

In \cite{DonTru71} and \cite{Le14} the anisotropic Orlicz-Sobolev space $W^1L_G$ is defined for
G-function $G:\R^{n+1}\to[0,\infty]$  as a space of weakly differentiable functions
$u:\R^n\supset \Omega\to\R$ such that $(u,D_1u,D_2u, ...,D_nu)$ belongs to the Orlicz space
generated by $G$. A norm for  $W^1L_G$ is given by
\[
\norm{u}_{1,G,\Omega}=\norm{(u,Du)}_{G,\Omega}.
\]
In \cite{FucOsm98} we can find definition of isotropic Orlicz-Sobolev space of real valued
functions
\[
W^1_A(\Omega)=\{u\in\Omega\to\R \text{ measurable }: u, |\nabla u|\in L_A \},
\]
where $L_A$ is Orlicz Space and $A$ is an N-function.

In \cite{AciBurGiuMazSch15} the isotropic Orlicz-Sobolev space if vector-valued functions is
defined to be a space of absolutely continuous functions $u:[0,T]\to\R^d$ such that $u$ and
$\dot{u}$ belongs to Orlicz space generated by an N-function. Similar treatment can be found in
\cite{RadRep15}.

\section{G-functions}
\label{sec:G}

Let $\inner{\cdot}{\cdot}$ denote the standard inner product on  $\R^N$ and $|\cdot|$ is the
induced norm.  We assume that $G\colon\R^N\to [0,\infty)$ satisfies the following conditions:
\begin{enumerate}[label=($G_\arabic*$),series=G]
    \item \label{G:0in0}
    $G(0)=0$,
    \item \label{G:convex}
    $G$ is convex,
    \item \label{G:symmetric}
    $G$ is even,
    \item \label{G:supercoercivity}
    $G$ is supercoercive:
    \[
    \lim_{|x|\to \infty}\frac{G(x)}{|x|}=\infty,
    \]
    \item \label{G:delta2}
    $G$ satisfies the $\Delta_2$ condition:
    \begin{equation}\tag{$\Delta_2$}
        \exists_{K_1\geq 2}\ \exists_{M_1>0}\ \forall_{|x|\geq M_1}\ G(2x)\leq K_1G(x),
    \end{equation}
    \item \label{G:nabla2}
    $G$ satisfies the $\nabla_2$ condition:
    \begin{equation}\tag{$\nabla_2$}
        \exists_{K_2\geq 1}\ \exists_{M_2>0}\ \forall_{|x|\geq M_2}\ G(x)\leq \frac{1}{2K_2}G(K_2x).
    \end{equation}
\end{enumerate}

A function $G$ is a G-function in the sense of Trudinger \cite{Tru74}. In general, G-function can
be unbounded on bounded sets and need not satisfy conditions
\ref{G:supercoercivity}--\ref{G:nabla2} but only $\lim_{x\to\infty} G(x)=\infty$. A G-function of
one variable is called N-function. Some typical examples of $G$ are
\begin{enumerate}
  \item $G(x)=\frac1p|x|^p$, $1<p<\infty$
  \item $G(x)=\sum_{i=1}^N G_{p_i}(x_i)$, $1<p_i<\infty$
  \item $G(x)=(x_1-x_2)^{2}+x_2^{4}$
\end{enumerate}
A function $G$ can be equal to zero in some neighborhood of $0$. So that a function
\[
G(x)=\begin{cases}0&|x|\leq 1\\ |x|^2-1&|x|>1\end{cases}
\]
is also admissible. Conditions $\Delta_2$ and $\nabla_2$ implies that $G$ is of polynomial growth
(see Lemma \ref{lemma:xp<G<xq} below and \cite{KraRut61}). A function $f:\R^2\to\R$
$f(x)=e^{|x|}-|x|-1$ does not satisfy $\Delta_2$.

Since $G$ is convex and finite on $\R^n$, $G$ is locally Lipschitz and therefore continuous. Note
that for every $x\in \R^N$
\begin{equation*}
\label{G:properties:homogenity}
    \begin{aligned}
        G(\alpha x)\leq \alpha G(x),& \text{ if } 0\leq \alpha \leq 1,
        \\
        \alpha G(x)\leq G(\alpha x),& \text{ if } 1 \leq \alpha.
    \end{aligned}
\end{equation*}
We obtain immediately that $G$ is non-decreasing along any half-line through the origin i.e. for
every $x\in \R^N$
\begin{equation}
\label{G:properties:nondecreasing}
    0<\alpha\leq \beta \implies G(\alpha x)\leq G(\beta x).
\end{equation}
Our assumptions on $G$ imply that for every $x_0\in \R^N$ there exists $a\in \R^N$ and $b\in \R$
such that for all $x\in\R^N$
\[
\inner{a}{x_0}+b=G(x_0) \text{ and } \inner{a}{x}+b\leq G(x).
\]
From this, we can easily obtain the Jensen integral inequality. Let $I\subset \R$ be a finite
interval and let $u\in \Lpspace[1](I,\R^N)$. Then
\[
G\left(\frac{1}{\mu(I)}\int_I u\,dt\right)\leq \frac{1}{\mu(I)}\int_I G(u)\,dt.
\]

We will often make use of the following simple observation.

\begin{proposition}
    \label{lemma:delat2charact}
    For all $\alpha\in \R$ there exists $K_1(\alpha)> 0$ such that
    \[
    G(\alpha x)\leq K_1(\alpha)G(x)
    \]
    for all $|x|\geq M_1$.
\end{proposition}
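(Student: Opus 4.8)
The plan is to bootstrap the doubling inequality in \ref{G:delta2} to an arbitrary dilation factor. First I would reduce to the case $\alpha\geq 0$: since $G$ is even by \ref{G:symmetric}, we have $G(\alpha x)=G(|\alpha|x)$, so it suffices to treat $\alpha\geq 0$. The case $\alpha=0$ is immediate because $G(0)=0\leq K_1(0)G(x)$ for any choice of $K_1(0)>0$ (both sides are nonnegative), and the case $0<\alpha\leq 1$ follows at once from the sub-homogeneity inequality $G(\alpha x)\leq \alpha G(x)\leq G(x)$ recorded above, so one may take $K_1(\alpha)=1$.

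The substantive case is $\alpha>1$. Here I would choose the smallest integer $n\geq 1$ with $2^n\geq\alpha$ and iterate the $\Delta_2$ estimate $n$ times. The key point to verify is that every intermediate argument stays in the region where \ref{G:delta2} applies: if $|x|\geq M_1$ then $|2^k x|=2^k|x|\geq |x|\geq M_1$ for all $k\geq 0$, so the hypothesis $|2^{k}x|\geq M_1$ is met at each step. Applying \ref{G:delta2} successively then gives
\[
G(2^n x)\leq K_1\, G(2^{n-1}x)\leq \dots \leq K_1^{\,n}G(x),\qquad |x|\geq M_1.
\]

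Finally I would pass from the dilation factor $2^n$ down to $\alpha$ using monotonicity along rays. Since $0<\alpha\leq 2^n$, inequality \eqref{G:properties:nondecreasing}, applied to the fixed vector $x$, yields $G(\alpha x)\leq G(2^n x)$, and combining this with the displayed chain gives $G(\alpha x)\leq K_1^{\,n}G(x)$ for all $|x|\geq M_1$. Setting $K_1(\alpha)=K_1^{\,n}$ with $n=\lceil \log_2\alpha\rceil$ completes the argument. I do not expect a genuine obstacle here; the only step requiring care is the verification that the iterated doubling never leaves the admissible region $|x|\geq M_1$, which is precisely why one scales up by factors $\geq 1$ and then invokes \eqref{G:properties:nondecreasing} to descend to $\alpha$, rather than attempting to dilate by $\alpha$ directly.
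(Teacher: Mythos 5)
Your proof is correct, and it is exactly the standard argument: the paper itself gives no proof of this proposition (it is presented as a ``simple observation'' with pointers to the literature), and the argument implicit there is the one you give --- reduce to $\alpha>1$ via evenness and sub-homogeneity, iterate the $\Delta_2$ doubling inequality $n=\lceil \log_2\alpha\rceil$ times (noting, as you carefully do, that the iterates $2^k x$ stay in the region $|x|\geq M_1$), and then descend from the factor $2^n$ to $\alpha$ by monotonicity along rays, yielding $K_1(\alpha)=K_1^{\,n}$. No gaps; this is a complete proof of the statement as the paper uses it.
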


In fact, the above proposition provides a characterization of $\Delta_2$ (see \cite{Ska69_I,Sch05}).
It follows that for every $\alpha\in \R$ there exists $C_\alpha>0$ such that for $x\in \R^N$
\[
  G(\alpha x)\leq C_\alpha+K_1(\alpha)G(x).
\]

We recall a notion of Fenchel conjugate. Define $G^{\ast}:\R^N\to[0,\infty)$ by
\begin{equation*}
    \label{eq:conjugate_def}
  G^{\ast}(y):=\sup_{x\in\R^N}\{\inner{x}{y}-G(x)\}.
\end{equation*}
A function $G^\ast$ is called Fenchel conjugate of $G$. As an immediate consequence of definition we
have  the so called Fenschel inequality:
\begin{equation*}
\label{ineq:Fenshel}
    \forall_{x,y\in\R^N}\ \inner{x}{y}\leq G(x)+G^{\ast}(y).
\end{equation*}

Consider arbitrary $f\colon \R^N\to [0,\infty)$. It is obvious that the conjugate function
$f^\ast$ is always convex. But in general $f^\ast$ need not be continuous, finite or coercive, even
if $f$ is. From the other hand, it is well known that if $f$ is convex and l.s.c. then
$f^\ast\not \equiv \infty$ and $(f^\ast)^\ast=f$.
\begin{example}

    \begin{enumerate}
            \item
        If
         \[
        g(x)=\begin{cases}
        0&|x|\leq 1\\
        \infty&|x|>1
        \end{cases}
        \]
    then   $g^{\ast}(x)=|x|.$
Note that $g$ and $g^{\ast}$ are G-functions but do not satisfy our assumptions.

\item If $G(x)=\frac{1}{p}|x|^p$, then $G^\ast(x) = \frac1q|x|^q$, $\frac{1}{p}+\frac{1}{q}=1$.
\item If $G(x)=\sum_{i=1}^N G_i(x_i)$ then $G^\ast(x)=\sum_{i=1}^N G_i^\ast(x_i)$.
\item If $G(x,y)=(x-y)^2+y^4$,  then
\[
G^{\ast}(x,y)=\frac14x^2+\frac{3}{4}(x+y)\left(\frac{x+y}{4}\right)^{\frac13}.
\]
    \end{enumerate}
\end{example}

More information on general theory of conjugate functions can be found in standard books on convex
analysis, see for instance \cite{HirLem04,Aub98}.

If a function $G\colon \R^n\to [0,\infty)$ satisfies conditions \ref{G:0in0}--\ref{G:nabla2} then
the same is true for its conjugate $G^\ast$. This is main reason we want to restrict class of
considered functions.

\begin{theorem}
    If $G$ satisfies conditions \ref{G:0in0}--\ref{G:nabla2} then $G^\ast$ also satisfies
    \ref{G:0in0}--\ref{G:nabla2} and $(G^\ast)^\ast=G$.
\end{theorem}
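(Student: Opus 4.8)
The plan is to verify the six conditions for $G^\ast$ one at a time, reserving the $\Delta_2$--$\nabla_2$ pair for last, and to deduce $(G^\ast)^\ast=G$ from the Fenchel--Moreau theorem: since $G$ is convex, finite and (being locally Lipschitz) continuous, it is proper and lower semicontinuous, so the biconjugate coincides with $G$, as already recalled in the text. The properties \ref{G:0in0}, \ref{G:convex}, \ref{G:symmetric} are immediate. Convexity of $G^\ast$ is automatic, being a pointwise supremum of the affine maps $y\mapsto \inner{x}{y}-G(x)$. From $G\geq 0$ and $G(0)=0$ one gets $G^\ast(y)\geq \inner{0}{y}-G(0)=0$ and $G^\ast(0)=\sup_x(-G(x))=-\inf_x G(x)=0$. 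Evenness follows by the substitution $x\mapsto -x$ together with \ref{G:symmetric}: $G^\ast(-y)=\sup_x(\inner{-x}{y}-G(-x))=\sup_x(\inner{x}{y}-G(x))=G^\ast(y)$.

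The finiteness and coercivity statements are dual to one another. First I would record an attainment lemma that will also drive the last step. Fix $y$ and consider $\varphi_y(x)=\inner{x}{y}-G(x)$, which is concave and continuous; supercoercivity \ref{G:supercoercivity} gives $\varphi_y(x)\leq |x|\,|y|-G(x)=|x|(|y|-G(x)/|x|)\to -\infty$ as $|x|\to\infty$, so $\varphi_y$ attains its maximum at some $w(y)$. In particular $G^\ast(y)=\varphi_y(w(y))$ is finite, so $G^\ast$ maps into $[0,\infty)$. The maximizer satisfies $y\in\partial G(w(y))$; since a finite convex function is Lipschitz on compact sets, its subgradients over any ball are bounded, whence $|w(y)|\to\infty$ as $|y|\to\infty$. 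For supercoercivity \ref{G:supercoercivity} of $G^\ast$ I would instead test the supremum at $x=M y/|y|$: using continuity of $G$ on the compact unit sphere, $G^\ast(y)\geq M|y|-C_M$ with $C_M=\max_{|u|=1}G(Mu)<\infty$, so $G^\ast(y)/|y|\geq M-C_M/|y|\to M$; as $M$ is arbitrary this yields $\lim_{|y|\to\infty}G^\ast(y)/|y|=\infty$.

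For \ref{G:delta2} and \ref{G:nabla2} the plan is to prove the classical duality $G\in\Delta_2\Rightarrow G^\ast\in\nabla_2$ and $G\in\nabla_2\Rightarrow G^\ast\in\Delta_2$ by exploiting the maximizer $w(y)$ above. For the first implication, let $w=w(y)$ realize $G^\ast(y)=\inner{w}{y}-G(w)$ and set $K=K_1/2\geq 1$. Testing the supremum defining $G^\ast(Ky)$ at the point $2w$ gives
\[
G^\ast(Ky)\geq \inner{2w}{Ky}-G(2w)=2K\inner{w}{y}-G(2w)\geq 2K\inner{w}{y}-2K\,G(w)=2K\,G^\ast(y),
\]
where the middle inequality is $\Delta_2$ in the form $G(2w)\leq K_1 G(w)=2K\,G(w)$, valid once $|w|\geq M_1$; since $|w(y)|\to\infty$ this holds for all large $|y|$, which is exactly \ref{G:nabla2} for $G^\ast$. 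For the second implication, let $w=w(2y)$ realize $G^\ast(2y)=2\inner{w}{y}-G(w)$. Testing $G^\ast(y)$ at $w/K_2$ gives $\inner{w}{y}\leq K_2\big(G^\ast(y)+G(w/K_2)\big)$, and the $\nabla_2$ estimate $G(w/K_2)\leq \tfrac{1}{2K_2}G(w)$ (valid for $|w|$ large) yields
\[
G^\ast(2y)=2\inner{w}{y}-G(w)\leq 2K_2\,G^\ast(y)+2K_2\,G(w/K_2)-G(w)\leq 2K_2\,G^\ast(y),
\]
which is \ref{G:delta2} for $G^\ast$ with constant $2K_2$.

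The routine parts are the first two paragraphs; the real content is the third, and the one point needing care there is that $\Delta_2$ and $\nabla_2$ are only assumed for large arguments. This is precisely why I would first establish that the maximizer $w(y)$ escapes to infinity as $|y|\to\infty$: it guarantees $|w|\geq M_1$ (resp. $|w|$ large) in the two chains of inequalities, so that the hypotheses on $G$ may legitimately be invoked and the resulting $\Delta_2$/$\nabla_2$ estimates for $G^\ast$ hold for all sufficiently large $|y|$.
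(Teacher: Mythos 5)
Your proof is correct, but it is a genuinely different route from the paper's: the paper's own proof is essentially a list of citations (the first three properties are declared evident; finiteness and supercoercivity of $G^\ast$ are quoted from Hiriart-Urruty--Lemar\'echal, the $\Delta_2$/$\nabla_2$ duality from a remark of Desch--Grimmer, and $(G^\ast)^\ast=G$ from the Fenchel--Moreau corollary), whereas you prove everything from first principles. Your key device --- showing that the supremum defining $G^\ast(y)$ is attained at some $w(y)$ with $y\in\partial G(w(y))$, and that $|w(y)|\to\infty$ as $|y|\to\infty$ because subgradients of a locally Lipschitz convex function are bounded on balls --- is exactly what is needed to handle the one delicate point the citations hide, namely that $\Delta_2$ and $\nabla_2$ are assumed only for large arguments; testing the conjugate supremum at $2w$ (resp.\ $w/K_2$) then yields $G^\ast(y)\leq \tfrac{1}{2K}G^\ast(Ky)$ with $K=K_1/2$ and $G^\ast(2y)\leq 2K_2\,G^\ast(y)$, with explicit constants. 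What the paper's approach buys is brevity; what yours buys is a self-contained argument in which the large-argument restriction is visibly and correctly managed, and which makes the quantitative relation between the constants of $G$ and those of $G^\ast$ explicit.
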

\begin{proof}
    It is evident that $G^\ast$ satisfies \ref{G:0in0}, \ref{G:convex} and \ref{G:symmetric}.
    It is well known that under our conditions,
    $G^\ast$ is finite (proposition 1.3.8, \cite{HirLem04}),
    $G^\ast$ is supercoercive (proposition 1.3.9, \cite{HirLem04}) and
    $G^\ast$ satisfies \ref{G:delta2} and \ref{G:nabla2} (remark 2.3, \cite{DesGri01}).
    Corrollary \cite[cor. 1.3.6]{HirLem04} gives $(G^\ast)^\ast=G$.
\end{proof}

In order to compare growth rate of G-functions we define two relations. Let $G_1$ and $G_2$ be
G-functions. Define
\[
    G_1\prec G_2
    \iff
    \exists_{M\geq 0}\ \exists_{K>0}\ \forall_{|x|\geq M}\ G_1(x)\leq G_2(K\,x)
\]
and
\[
    G_1\prec\prec G_2
    \iff
    \forall_{\alpha>0}\ \lim_{|x|\to \infty}\frac{G_2(\alpha x)}{G_1(x)}=\infty.
\]
For conjugate functions we have (see \cite[thm. 3.1]{KraRut61})
\begin{equation*}
    \label{prop:F<GtoG*<F*}
    G_1\prec G_2 \Rightarrow G_2^{\ast}\prec G_1^{\ast}.
\end{equation*}

Obviously $G_1\prec\prec G_2$ implies $G_1\prec G_2$. Assumption \ref{G:supercoercivity} implies
$|x|\prec\prec G$. It is true that $|x|\prec G$ holds under weaker assumption: $G(x)\to \infty$.
Note that, if $p>1$ then $|x|\prec\prec |x|^p$. Hence, if $|x|^p\prec G$ then $|x|\prec\prec
G$. Since $G$ satisfies \ref{G:delta2} and \ref{G:nabla2} we have the following bounds for the
growth of $G$.

\begin{lemma}[{cf. \cite[Lemma 2.4]{DesGri01}}]
    \label{lemma:xp<G<xq}
There exists $p,q\in (1,\infty)$ such that
\[
    |x|^p\prec G\prec |x|^q.
\]
\end{lemma}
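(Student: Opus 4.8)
The plan is to prove the two estimates separately. The bound $G\prec|x|^q$ will come from the $\Delta_2$ condition \ref{G:delta2}, and the bound $|x|^p\prec G$ will be obtained by applying the first estimate to the conjugate $G^\ast$ and dualizing.

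For the upper bound I would exploit the growth of $G$ along rays. Fix $x$ with $|x|\ge M_1$ and write $x=r\omega$, $|\omega|=1$, $r\ge M_1$. Since $|2^k x|=2^k|x|\ge M_1$ for every $k\ge 0$, iterating \ref{G:delta2} gives $G(2^n x)\le K_1^n G(x)$, and using that $G$ is nondecreasing along the ray through $\omega$ (see \eqref{G:properties:nondecreasing}) one can compare $G(r\omega)$ with $G(M_1\omega)$: choosing $n$ with $2^nM_1\le r<2^{n+1}M_1$ and setting $q=\log_2 K_1$ yields $G(r\omega)\le K_1\,(r/M_1)^q\,G(M_1\omega)$. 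The factor $G(M_1\omega)$ is exactly where the anisotropy enters, since a priori it depends on the direction $\omega$; making the estimate uniform over directions is the \emph{main obstacle}. It is resolved by compactness: $G$ is convex and finite, hence continuous, so it attains a finite maximum $C_0$ on the sphere $\{|y|=M_1\}$. This gives $G(x)\le K_1M_1^{-q}C_0\,|x|^q$ for all $|x|\ge M_1$, which is $G\prec|x|^q$ once the constant is absorbed into the dilation factor $K$. Since $K_1\ge 2$ we have $q\ge 1$, and replacing $q$ by any larger exponent if necessary we may take $q>1$.

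For the lower bound I would pass to the conjugate. By the theorem above, $G^\ast$ again satisfies \ref{G:0in0}--\ref{G:nabla2}; in particular $G^\ast$ obeys $\Delta_2$, so the argument just given, applied to $G^\ast$, produces an exponent $q'>1$ with $G^\ast\prec|x|^{q'}$. I then invoke the duality $G_1\prec G_2\Rightarrow G_2^\ast\prec G_1^\ast$ together with $(G^\ast)^\ast=G$. Power functions of a fixed exponent are equivalent under $\prec$ up to a constant dilation, so $G^\ast\prec|x|^{q'}$ is the same as $G^\ast\prec\frac{1}{q'}|x|^{q'}$, whose conjugate is $\frac1p|x|^p$ with $\frac1p+\frac1{q'}=1$ (see the Example). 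Dualizing gives $\frac1p|x|^p\prec G$, which is the same as $|x|^p\prec G$ by the same equivalence of power functions, and $p=q'/(q'-1)>1$ because $q'<\infty$. Combining the two estimates yields $|x|^p\prec G\prec|x|^q$ with $p,q\in(1,\infty)$.
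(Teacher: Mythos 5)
Your proof is correct, and while your upper bound is essentially the paper's argument (the same $\Delta_2$ doubling iteration along rays, with continuity of $G$ on the sphere $\{|y|=M_1\}$ supplying the uniform constant $C_0=\overline{G}(M_1)$ and $q=\log_2 K_1$), your lower bound takes a genuinely different route. The paper never conjugates: it uses \ref{G:nabla2} directly, showing by induction that $|x|\geq K_2^k M$ forces $G(x)\geq (2K_2)^k C$, which yields the explicit exponent $p=1+\tfrac{1}{\log_2 K_2}$. You instead apply the $\Delta_2$ argument to $G^\ast$ (legitimate, since the theorem that $G^\ast$ inherits \ref{G:0in0}--\ref{G:nabla2} precedes the lemma) and dualize via $G_1\prec G_2\Rightarrow G_2^\ast\prec G_1^\ast$, $(G^\ast)^\ast=G$, and the computation $\left(\tfrac{1}{q'}|x|^{q'}\right)^\ast=\tfrac1p|x|^p$; your bookkeeping with the $\prec$-equivalence of constant multiples of power functions is sound. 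What your route buys is economy -- one argument proves both bounds -- and it is in the spirit of the paper, which immediately after the lemma deduces $|x|^{\frac{q}{q-1}}\prec G^\ast\prec|x|^{\frac{p}{p-1}}$ by exactly the dualization you use (you have effectively run that derivation in reverse). What it costs is self-containedness and explicitness: it leans on the imported facts that $G^\ast$ satisfies $\Delta_2$ (which is precisely the dual encoding of \ref{G:nabla2} for $G$) and on the Krasnosel'skii--Rutickii duality for $\prec$, and the resulting $p$ is only explicit in terms of the $\Delta_2$ constant of $G^\ast$ rather than of $K_2$. A small point in your favor: you note that $q=\log_2 K_1$ may equal $1$ when $K_1=2$ and fix this by enlarging $q$, an edge case the paper's proof glosses over.
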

\begin{proof}
Set $C=\overline{G}(M_1)$. By induction, if $|x|\leq 2^n M_1$ then $G(x)\leq K_1^n C$. For $|x|\geq
M_1$ choose $n$ such that $2^{n-1}M_1\leq |x|\leq 2^nM_1$. Then $n-1\leq \log_2(|x|/M_1)$ and
$G(x)\leq C K_1^{1+\log_2(|x|/M_1)}$. Therefore, for $|x|\geq M_1$,
\[
G(x)\leq CK_1M_1^{-q}\,|x|^q,\quad q=\log_2(K_1).
\]
This proves that $G\prec |x|^q$. Choose $r>0$ such that if $x\in G^{-1}(\overline{G}(M_1))$ then
$|x|\leq r$. Set $M=rM_1$. Again, by induction, for $|x|\geq K_2^k M$ we have $(2K_2)^kC\leq G(x)$.
This implies
\[
G(x)\geq C(2K_2)^{-q}|x|^{q},\quad p=1+\frac{1}{\log_2 (K_2)}
\]
whenever $|x|\geq MK_2$. Hence $|x|^p\prec G$.
\end{proof}

Immediately from the above we get $|x|^{\frac{q}{q-1}}\prec G^\ast \prec |x|^{\frac{p}{p-1}}$.

\section{Orlicz spaces}
\label{sec:Ospace}

Let $I\subset \R$ be a finite interval. The Orlicz space $\LGspace=\LGspace(I,\R^n)$ is defined to
be
\[
\LGspace(I,\R^n)=
\left\{
u\colon I\to \R^n\colon u\text{ - measurable }
\int_I G\left(u\right)\,dt<\infty
\right\}.
\]

As usual, we identify functions equal a.e. For an arbitrary G-function $f\colon \R^n\to [0,\infty)$
which does not satisfies $\Delta_2$ the set $\LGspace[f]$ is not a linear space but only a convex
set. In fact, it is well known that the set $\LGspace[f]$ is linear space if and only if a
G-function $f$ satisfies $\Delta_2$ condition.

For $u\in \LGspace$ define:
\[
\LGnorm{u}=\inf\left\{\alpha>0\colon \int_I G\left(\frac{u}{\alpha}\right)\, dt\leq 1
\right\}.
\]
The function $\LGnorm{\cdot}$ is called the Luxemburg norm. It is easy to see that
\[
    \int_IG\left(\frac{u}{\LGnorm{u}}\right)\, dt = 1,
\]
since $G$ satisfies $\Delta_2$. Moreover
\[
  \int_IG\left(\frac{u}{k}\right)\, dt\leq 1\iff \LGnorm{u}\leq k.
\]

\begin{remark}
All properties of $\LGspace$ remains true for $\LGspace[{G^\ast}],$ since $G$ and $G^\ast$
belongs to the same class of functions.
\end{remark}

\begin{theorem}\label{th:Luxemburgisnorm}
    If $G\colon \R^n\to [0,\infty)$ satisfies \ref{G:0in0}--\ref{G:nabla2}, then
$(\LGspace(I,\R^n),\LGnorm{\cdot})$ is a normed linear space.
\end{theorem}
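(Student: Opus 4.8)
The plan is to verify the three norm axioms for the Luxemburg functional $\LGnorm{\cdot}$ on the linear space $\LGspace(I,\R^n)$, using the facts already established in the excerpt: namely that $G$ is convex, even, satisfies $\Delta_2$ (so that $\LGspace$ is a genuine linear space and the defining integral equals $1$ at the norm), and is supercoercive with $G(0)=0$.

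First I would establish definiteness. If $u=0$ a.e. then $\int_I G(u/\alpha)\,dt=\int_I G(0)\,dt=0\le 1$ for every $\alpha>0$, so $\LGnorm{u}=0$. For the converse, suppose $\LGnorm{u}=0$; then for every $\alpha>0$ we have $\int_I G(u/\alpha)\,dt\le 1$. I would use supercoercivity \ref{G:supercoercivity} together with Lemma \ref{lemma:xp<G<xq}, which gives $|x|^p\prec G$, i.e. a bound $c|x|^p\le G(x)$ for $|x|$ large (absorbing the small-$x$ regime into an additive constant on the finite-measure interval $I$). This forces $\int_I |u/\alpha|^p\,dt$ to stay bounded as $\alpha\to 0$, which is only possible if $u=0$ a.e. Alternatively, one observes that $\int_I G(u/\alpha)\,dt\to\infty$ as $\alpha\to 0$ whenever $u\not\equiv 0$, again by the lower polynomial bound.

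Next comes positive homogeneity, $\LGnorm{\lambda u}=|\lambda|\,\LGnorm{u}$ for $\lambda\neq 0$. Because $G$ is even \ref{G:symmetric}, I may assume $\lambda>0$. Here I would simply rescale the infimum: $\int_I G\big(\lambda u/\alpha\big)\,dt\le 1$ holds iff $\int_I G\big(u/(\alpha/\lambda)\big)\,dt\le 1$, so substituting $\beta=\alpha/\lambda$ shows the admissible set for $\lambda u$ is exactly $\lambda$ times the admissible set for $u$, whence the infimum scales by $\lambda$. The case $\lambda=0$ is covered by definiteness.

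Finally, the triangle inequality, which is the main obstacle. I would use the convexity \ref{G:convex} of $G$ together with the characterization $\int_I G(u/k)\,dt\le 1\iff \LGnorm{u}\le k$ already recorded in the excerpt. Let $s=\LGnorm{u}$ and $t=\LGnorm{v}$ (both nonzero; the zero cases are trivial by definiteness). Writing $\frac{u+v}{s+t}=\frac{s}{s+t}\cdot\frac{u}{s}+\frac{t}{s+t}\cdot\frac{v}{t}$ as a convex combination, convexity of $G$ gives the pointwise bound
\[
G\left(\frac{u+v}{s+t}\right)\le \frac{s}{s+t}\,G\left(\frac{u}{s}\right)+\frac{t}{s+t}\,G\left(\frac{v}{t}\right).
\]
Integrating over $I$ and using $\int_I G(u/s)\,dt=1=\int_I G(v/t)\,dt$ yields $\int_I G\big((u+v)/(s+t)\big)\,dt\le 1$, and hence $\LGnorm{u+v}\le s+t=\LGnorm{u}+\LGnorm{v}$ by the characterization. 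The delicate point throughout is making sure the defining integrals really equal $1$ at the norm value (rather than merely being $\le 1$), which is exactly where the $\Delta_2$ condition is used and is already asserted in the text preceding the theorem; I would invoke that fact explicitly.
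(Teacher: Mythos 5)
Your verification of the three norm axioms is correct, and on homogeneity and the triangle inequality it is exactly the paper's argument: the same rescaling of the admissible set, and the same convex combination $\frac{u+v}{s+t}=\frac{s}{s+t}\frac{u}{s}+\frac{t}{s+t}\frac{v}{t}$ integrated against the normalization $\int_I G(u/s)\,dt=1=\int_I G(v/t)\,dt$. Definiteness is where you genuinely diverge: the paper stays inside the elementary properties of Section \ref{sec:G}, using ray-monotonicity \eqref{G:properties:nondecreasing} to reduce to a sphere and bound $\int_I G(u/k)\,dt\ge\mu(I_1)\,\underline G(\varepsilon/k)$ on a set $I_1$ of positive measure where $|u|\ge\varepsilon$, with $\underline G(r)=\inf\{G(y)\colon |y|=r\}\to\infty$ by supercoercivity \ref{G:supercoercivity}; you instead pass through the polynomial lower bound $|x|^p\prec G$ of Lemma \ref{lemma:xp<G<xq} (legitimate, since that lemma precedes the theorem), which is heavier machinery but yields the same conclusion and even a quantitative rate. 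Two caveats. First, the theorem asserts a normed \emph{linear} space, and the paper's proof devotes its entire first paragraph to linearity --- splitting $I$ into $\{t\colon |u(t)|\le M_1\}$ and its complement, using Proposition \ref{lemma:delat2charact} together with continuity of $G$ to show $\alpha u\in\LGspace$, and then convexity for sums; you take linearity as ``already established,'' but the text preceding the theorem only \emph{asserts} it as well known, and this theorem's proof is precisely where the paper carries it out, so in a self-contained proof you should supply that short argument rather than cite it. Second, your ``delicate point'' is overstated: the triangle inequality needs only $\int_I G\left(u/\LGnorm{u}\right)\,dt\le 1$, which holds for any G-function by monotone convergence along rays; the $\Delta_2$ condition \ref{G:delta2} is what upgrades this to equality, but equality is never actually used in your argument.
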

\begin{proof}
We first prove that $\LGspace$ is a linear space.  Since $G$ is continuous and satisfies $\Delta_2$,
we get
\[
\int_ I G(\alpha u)\, dt=
\int_{ I_1} G(\alpha u)\, dt+\int_{ I\setminus I_1} G(\alpha u)\, dt
\leq
\mu(I_1)\,C_\alpha +K_1(\alpha) \int_ I G(u)\, dt <\infty
\]
where $ I_1={\{t\in I\colon |u(t)|\leq M_1\}}$. Hence, if $u\in \LGspace$ then $\alpha
u\in\LGspace$ for all $\alpha\in \R$. For every $u,v\in \LGspace$ and $\alpha,\beta\in \R$, by
\ref{G:convex} and Proposition \ref{lemma:delat2charact}, we have
\[
\int_ I G(\alpha u+\beta v)dt
\leq
\frac12 \int_ I G(2\alpha u)dt+\frac12 \int_ I G(2\beta v)dt<\infty.
\]
Hence $\alpha u+\beta v\in \LGspace$.

Now we show that $\LGnorm{\cdot}$ is a norm on $\LGspace$.
It is evident that if $u=0$ then $\LGnorm{u}=0$. Suppose $u\neq 0$. There exists $I_1\subset I $
with positive measure and $\varepsilon>0$ such that for all $t\in I_1$, $|u(t)|\geq \varepsilon$.
For every $t\in I_1$ there exists $\alpha_t\geq 1$ and $y_t\in \R^n$, $|y_t|=\varepsilon$ such that
$u(t)=\alpha_t y_t$. For all $k>0$ we have by \eqref{G:properties:nondecreasing} that $G(\alpha_t
y_t/k)\geq G(y_t/k)$. Hence
\[
    \int_I  G\left(\frac{u}{k}\right)\,dt
    \geq
    \int_{I _1} G\left(\frac{u}{k}\right)\,dt
    =
    \int_{I _1} G\left(\frac{\alpha_t y_t}{k}\right)\,dt
    \geq
    \int_{I _1} G\left(\frac{y_t}{k}\right)\,dt
    \geq
    \mu(I_1) \underline G(\varepsilon/k),
\]
where $\underline G(\varepsilon/k) = \inf\{G(y)\colon |y|=\frac{\varepsilon}{k}\}$.  Since
$\underline G(\varepsilon/k)\nearrow \infty$ as $k\searrow 0$, there exists $k_0>0$ such that for
all
$k\leq k_0$
\[
    \int_I  G\left(\frac{u}{k}\right)dt > 1
    \]
    and
    \[
    \LGnorm{u}=\inf\left\{k>0: \int_I  G\left(\frac{u}{k}\right)dt\leq 1\right\}\geq k_0>0.
    \]
    Finally, $\LGnorm{u}=0\iff u=0$. Let $u\in L^G$ and $\alpha\in \R$.  For $\alpha\in \R$:
\begin{multline*}
    \LGnorm{\alpha u}
    =
    \inf\left\{k>0: \int_I  G\left(\frac{\alpha u}{k}\right) \leq 1\right\}
    =
    |\alpha| \inf\left\{k/|\alpha|>0: \int_I  G\left(\frac{ u}{k/|\alpha|}\right)\leq
    1\right\}
    =
    |\alpha|\LGnorm{u}.
\end{multline*}

    If  $\LGnorm{u}=0$ or $\LGnorm{v}=0$, then it is obvious that
    $\LGnorm{u+v}\leq\LGnorm{u}+\LGnorm{v}$. Set $\alpha=\LGnorm{u}>0$, $\beta=\LGnorm{v}>0$. Then
    $\int_I  G\left(\frac{ u}{\alpha}\right)=1$ and
    $\int_I  G\left(\frac{v}{\beta}\right)=1$. Thus
    \begin{equation*}
        \int_I  G\left(\frac{u+v}{\alpha+\beta}\right)dt
        \leq
        \frac{\alpha}{\alpha+\beta}\int_I  G\left(\frac{u}{\alpha}\right)dt
        +
        \frac{\beta}{\alpha+\beta}\int_I  G\left(\frac{ v}{\beta}\right)dt=1.
    \end{equation*}
    As a consequence
    \begin{equation*}
        \int_I  G\left(\frac{ u+v}{\LGnorm{u}+\LGnorm{v}}\right)dt\leq 1
        \implies \LGnorm{u+v}\leq\LGnorm{u}+\LGnorm{v}.
    \end{equation*}
\end{proof}

An important example of Orlicz space is a classical Lebesgue space $(\Lpspace,\Lpnorm{\cdot})$,
$p\in(1,\infty)$ defined by $G(x)=\frac1p|x|^p$. It is easy to check that in this case
$\LGspace=\Lpspace$ and the Luxemburg norm and standard $\Lpspace$ norm are equivalent. Two
important examples of Lebesgue spaces are not covered in our setting, namely $\Lpspace[1]$ and
$\Lpspace[\infty]$. The space $\Lpspace[1]$ is generated by $f(x)=|x|$ and the space
$\LGspace[\infty]$ generated by $f^\ast$. We exclude these two spaces because we want to have only
reflexive spaces in the class of Orlicz spaces we consider.

It was pointed out by Schappacher \cite[example 3.1]{Sch05} that if $f$ is not bounded on bounded
sets (i.e. we allow $f(x)=+\infty$ for some $x\in \R^n$) then $\LGspace[f]$ need not  be a linear
space, even if $f$ satisfies $\Delta_2$ condition. To see this, consider
\[
f(x)=\begin{cases}
         \frac{1}{1-|x|}-1&|x|<1\\
         \infty& |x|\geq 1
     \end{cases}
     \text{ and } u(t)=t/2
\]
See \cite{KraRut61,Sch05} for more details.

\begin{theorem}[H\"older inequality]
    For every $u\in \LGspace$ and $v\in \LGspace[G^\ast]$
    \[
        \int_I \inner{u}{v}\,dt\leq 2\LGnorm{u}\LGnorm[G^\ast]{v}
    \]
\end{theorem}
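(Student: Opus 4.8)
The plan is to reduce everything to the Fenchel inequality $\inner{x}{y}\leq G(x)+G^\ast(y)$ established above, applied pointwise, after normalizing $u$ and $v$ by their Luxemburg norms.

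First I would dispose of the degenerate cases: if $\LGnorm{u}=0$ then, by Theorem \ref{th:Luxemburgisnorm}, $u=0$ almost everywhere, so both sides vanish; likewise if $\LGnorm[G^\ast]{v}=0$. Hence I may assume $\alpha:=\LGnorm{u}>0$ and $\beta:=\LGnorm[G^\ast]{v}>0$. Next I would record the integrability of the integrand. Since $G$ is even, $-\inner{x}{y}=\inner{-x}{y}\leq G(-x)+G^\ast(y)=G(x)+G^\ast(y)$, so together with Fenchel's inequality this yields $|\inner{x}{y}|\leq G(x)+G^\ast(y)$ for all $x,y\in\R^N$. Applying this with $x=u(t)$, $y=v(t)$ and using $u\in\LGspace$, $v\in\LGspace[G^\ast]$ shows $\inner{u}{v}\in\Lpspace[1](I)$, so the left-hand integral is well defined.

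The core computation is then a short estimate. By the characterization $\int_I G(u/k)\,dt\leq 1\iff \LGnorm{u}\leq k$ (and its analogue for $G^\ast$, valid since $G$ and $G^\ast$ lie in the same class), the choices $k=\alpha$ and $k=\beta$ give $\int_I G(u/\alpha)\,dt\leq 1$ and $\int_I G^\ast(v/\beta)\,dt\leq 1$. Applying the pointwise Fenchel inequality to $u(t)/\alpha$ and $v(t)/\beta$ and integrating over $I$ gives
\[
\frac{1}{\alpha\beta}\int_I \inner{u}{v}\,dt
=\int_I \innerbig{\frac{u}{\alpha}}{\frac{v}{\beta}}\,dt
\leq \int_I G\!\left(\frac{u}{\alpha}\right)dt+\int_I G^\ast\!\left(\frac{v}{\beta}\right)dt
\leq 2,
\]
which rearranges to the claimed bound $\int_I\inner{u}{v}\,dt\leq 2\alpha\beta=2\LGnorm{u}\LGnorm[G^\ast]{v}$.

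There is no serious obstacle here; the only point requiring a little care is justifying the two normalizing inequalities $\int_I G(u/\alpha)\,dt\leq 1$ and $\int_I G^\ast(v/\beta)\,dt\leq 1$, which is exactly the equivalence displayed just before the theorem (and which in fact holds with equality under $\Delta_2$). The factor $2$ is intrinsic to working with the Luxemburg norm rather than the Orlicz norm and arises simply from the two integrals each being bounded by $1$.
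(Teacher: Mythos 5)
Your proof is correct and follows essentially the same route as the paper: normalize $u$ and $v$ by their Luxemburg norms, apply the Fenchel inequality pointwise, and integrate to get the bound by $2$. The extra attention you give to the degenerate cases $\LGnorm{u}=0$ or $\LGnorm[G^\ast]{v}=0$ and to the integrability of $\inner{u}{v}$ is sound added rigor, but does not change the argument.
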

\begin{proof}
    Using Fenchel inequality we obtain
    \[
    \innerbig{\frac{u}{\LGnorm{u}}}{\frac{v}{\LGnorm[G^\ast]{v}}}\leq
    G\Big(\frac{u}{\LGnorm{u}}\Big)+G^\ast\Big(\frac{v}{\LGnorm[G^\ast]{v}}\Big).
    \]
Hence
\[
    \int_I\innerbig{\frac{u}{\LGnorm{u}}}{\frac{v}{\LGnorm[G^\ast]{v}}}\,dt
    \leq
    \int_IG\Big(\frac{u}{\LGnorm{u}}\Big)\,dt+\int_I
G^\ast\Big(\frac{v}{\LGnorm[G^\ast]{v}}\Big)\,dt
    \leq 2.
\]
\end{proof}

We finish this section by completeness of Orlicz space.

\begin{theorem}[{cf. \cite{KraRut61}, \cite[theorem 6.1]{Sch05}}]
    The space $(\LGspace,\LGnorm{\cdot})$ is complete.
\end{theorem}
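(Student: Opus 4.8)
The plan is to prove that $\LGspace$ is complete by the classical two-step scheme: first extract an almost-everywhere convergent subsequence of a given Cauchy sequence by means of a continuous embedding into $\Lpspace[1]$, and then upgrade that pointwise convergence back to convergence in the Luxemburg norm via Fatou's lemma.

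First I would record a continuous embedding $\LGspace(I,\R^n)\hookrightarrow\Lpspace[1](I,\R^n)$, where $\Lpspace[1]$ carries its ordinary norm $\Lpnorm[1]{w}=\int_I|w|\,dt$. By supercoercivity \ref{G:supercoercivity} there is $M_\ast>0$ with $G(x)\geq|x|$ whenever $|x|\geq M_\ast$. For $w\in\LGspace$ with $\lambda=\LGnorm{w}>0$ one has $\int_I G(w/\lambda)\,dt\leq1$, and splitting $I$ according to whether $|w/\lambda|\geq M_\ast$ gives
\[
\int_I\Big|\frac{w}{\lambda}\Big|\,dt
\leq \int_{\{|w/\lambda|\geq M_\ast\}} G\Big(\frac{w}{\lambda}\Big)\,dt + M_\ast\,\mu(I)
\leq 1+M_\ast\,\mu(I),
\]
so that $\Lpnorm[1]{w}\leq C\,\LGnorm{w}$ with $C=1+M_\ast\,\mu(I)$. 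Applying this to differences $u_n-u_m$ of a Cauchy sequence $(u_n)$ in $\LGspace$ shows $(u_n)$ is Cauchy in $\Lpspace[1]$; by completeness of $\Lpspace[1]$ it converges there to some $u$, and passing to a subsequence I may assume $u_{n_k}\to u$ almost everywhere.

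It remains to check that $u$ is the $\LGspace$-limit. Fix $\varepsilon>0$ and choose $N$ so that $\LGnorm{u_n-u_m}\leq\varepsilon$ for $n,m\geq N$, which by the norm characterization means $\int_I G((u_n-u_m)/\varepsilon)\,dt\leq1$. Fixing $n\geq N$ and letting $m=n_k\to\infty$, continuity of $G$ gives $G((u_n-u_{n_k})/\varepsilon)\to G((u_n-u)/\varepsilon)$ almost everywhere, so Fatou's lemma yields
\[
\int_I G\Big(\frac{u_n-u}{\varepsilon}\Big)\,dt
\leq \liminf_{k\to\infty}\int_I G\Big(\frac{u_n-u_{n_k}}{\varepsilon}\Big)\,dt\leq1.
\]
Hence $\LGnorm{u_n-u}\leq\varepsilon$ for every $n\geq N$; in particular $u_n-u\in\LGspace$, so $u\in\LGspace$ by linearity (Theorem \ref{th:Luxemburgisnorm}) and $u_n\to u$ in $\LGspace$, proving completeness.

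The two estimates are routine; the step demanding the most care is the passage to an almost-everywhere convergent subsequence, which is precisely where the embedding into $\Lpspace[1]$ — and hence supercoercivity — enters. The concluding Fatou step uses only continuity of $G$, and notably no appeal to $\Delta_2$ is needed for the convergence itself.
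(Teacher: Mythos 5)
Your proof is correct, and it follows the paper's overall two-step skeleton---extract an a.e.\ convergent subsequence from the Cauchy sequence, then upgrade to Luxemburg-norm convergence via Fatou---but the first step runs along a genuinely different route. The paper never passes through $\Lpspace[1]$: it shows directly that a Cauchy sequence in $\LGspace$ is Cauchy in measure, via a Chebyshev-type estimate on the modular (choose $\alpha$ with $G(\alpha x)>2/\delta$ for $|x|\geq\varepsilon$, then bound the measure of the bad set using $\int_I G(\alpha(u_n-u_m))\,dt\leq 1$), and then invokes the standard fact that Cauchy in measure yields an a.e.\ convergent subsequence. You instead prove the continuous embedding $\LGspace\hookrightarrow\Lpspace[1]$ (which the paper only records later, after the completeness theorem) and piggyback on completeness of $\Lpspace[1]$. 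Both hinges rest on supercoercivity and both import a measure-theoretic fact of comparable weight, so neither is more elementary; your version is more modular, while the paper's is self-contained at that point of the development, where no embedding has yet been stated. A second, smaller difference is the endgame: you apply Fatou to $u_n-u_{n_k}$ with $n\geq N$ fixed and $k\to\infty$, obtaining $\LGnorm{u_n-u}\leq\varepsilon$ for the whole tail at once, whereas the paper concludes first for the subsequence and then recovers the full sequence from the Cauchy property---your organization saves that last step. One micro-point that both you and the paper gloss over: the bound $\int_I G((u_n-u)/\varepsilon)\,dt\leq 1$ gives membership $u_n-u\in\LGspace$ immediately only when $\varepsilon\leq 1$ (by monotonicity along rays), or else via $\Delta_2$; taking $\varepsilon\leq 1$ without loss of generality settles it.
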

\begin{proof}
Let $\{u_n\}$ be a Cauchy sequence in $\LGspace$. Fix $\delta,\epsilon>0$ and choose $\alpha>0$ such
that $G(\alpha x)>2/\delta$ if $|x|\geq \varepsilon$. Let $n_0$ be large enough so that
$\LGnorm{u_n-u_m}\leq \alpha^{-1}$, i.e
\[
\int_I G(\alpha (u_n-u_m))\,dt\leq 1.
\]
Put
$
E=\{t\colon G(\alpha (u_n(t)-u_m(t) ))>\delta/2\}.
$
Then
\[
\frac{2}{\delta} \mu(E) \leq \int_I G(\alpha (u_n-u_m))\,dt\leq 1
\]
that is $\mu(E)<\frac{\delta}{2}$. It follows that
\[
\mu\left(\{t\colon|u_n(t)-u_m(t)|\geq \varepsilon\}\right)\leq \frac{\delta}{2}
\]
Thus $\{u_n\}$ is a Cauchy sequence in measure. This follows that there is a subsequence
$\{u_{n_k}\}$ convergent a.e. to some measurable function $u$.

Fix $\varepsilon>0$ and choose $K$ such that for $k,l>K$, $\LGnorm{u_{n_k}-u_{n_l}}\leq \epsilon$.
Then
\[
\int_I G\left(\frac{u_{n_k}-u_{n_l}}{\varepsilon}\right)\, dt
\leq
\int_I G\left(\frac{u_{n_k}-u_{n_l}}{\LGnorm{u_{n_k}-u_{n_l}}}\right)\, dt = 1.
\]
Letting $n_l\to \infty$ we obtain by Fatou Lemma,
\[
\int_I G\left(\frac{u_{n_k}-u}{\varepsilon}\right)\, dt \leq 1.
\]
Hence $u_{n_k}-u\in \LGspace$ and consequently $u\in \LGspace$. Since $\varepsilon>0$ is arbitrary,
$\LGnorm{u_{n_k}~-~u}\to 0$ and $\LGnorm{u_n-u} \to 0$.
\end{proof}

\subsection{Convergence}

Now we investigate relations between Luxemburg norm and the integral
\begin{equation*}
R_G(u):=\int_IG(u)\,dt.
\end{equation*}
A functional $R_G$ is called modular. Theory of modulars is well known and is developed in more
general setting than ours. More information can be found in \cite{Mus83,RaoRen02}.

For Lebesgue spaces a notions of modular and norm are indistinguishable because modular $\int_I
|u|^p\, dt$ is equal to $\Lpnorm[p]{u}^p$. But in Orlicz spaces relation between
$R_G$ and $\LGnorm{\cdot}$ is more complex.

There is remarkable difference between isotropic and anisotropic spaces. It is clear that if
$u,v\in \Lpspace$ (or more generally in isotropic Orlicz space) then $|u(t)|\leq |v(t)|$ a.e.
implies $\Lpnorm{u}\leq \Lpnorm{v}$. In anisotropic case it is no longer true, even if
$G(u(t)) < G(v(t))$. Next two examples illustrates this point.

\begin{example}
Let $G(x,y)=(x-y)^2+y^4$, $I=[0,1]$, $u(t)=(2,0)$ and $v(t)=(2,3/2)$. Then $|u(t)| < |v(t)|$,
$G(u(t)) < G(v(t))$ and $R_G(u)\leq R_G(v)$, but $2=\LGnorm{u} > \LGnorm{v} \simeq 1.6$.
\end{example}

\begin{example}
Let $G(x,y)=x^2+y^4$, $u(t)=(1,0)$ and $v(t)=\tfrac{11}{10}(\cos t,\sqrt{\sin t})$. In
$\LGspace([0,\pi],\R^2)$ we have
\[
\sqrt{\pi}=\LGnorm{u}>\LGnorm{v}\simeq 1.7
\]
but $|u(t)|< |v(t)|$,  $G(u(t)) < G(v(t))$ for all $t\in [0,\pi]$ and $R_G(u)<R_G(v)$.
\end{example}

\begin{definition}
    We say that a subset $K\subset\LGspace$ is modular bounded if there exists $C>0$ such that
    \[
    R_G(u)\leq C, \text{ for all } u\in K.
    \]

\end{definition}

Modular boundedness is sometimes called mean boundedness. It is evident that  $R_G(u) \leq
\LGnorm{u}$ if $\LGnorm{u}\leq 1$ and $R_G(u) > \LGnorm{u}$ if
$\LGnorm{u}> 1$.

\begin{lemma}
    \label{lem:modularboundness}
Let $u\in \LGspace$.
\begin{enumerate}
    \item If $R_G(u)\leq C$ then $\LGnorm{u}\leq \max\{C,1\}$.
    \item If $\LGnorm{u}\leq C$ then $R_G(u)\leq \mu(I) \widetilde{C} +K_1(C)$ for some
$\widetilde{C}>0$.
\end{enumerate}
Moreover, a set $K\subset\LGspace$ is modular bounded if and only if is norm bounded.
\end{lemma}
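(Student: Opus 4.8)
The plan is to establish the two quantitative estimates of parts (1) and (2) separately and then read off the final equivalence as an immediate corollary of them. Throughout I would lean on two facts recorded earlier: the characterization $\int_I G(u/k)\,dt\le 1\iff\LGnorm{u}\le k$, and the homogeneity inequality $G(\alpha x)\le\alpha G(x)$ valid for $0\le\alpha\le1$.

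For part (1), I would split on the size of $C$. If $C\le 1$, then $R_G(u)\le C\le 1$ forces $\int_I G(u)\,dt\le 1$, so taking $k=1$ in the characterization gives $\LGnorm{u}\le 1=\max\{C,1\}$. If $C>1$, I would rescale the argument by $C$: since $1/C<1$, the homogeneity inequality yields $G(u/C)\le (1/C)G(u)$ pointwise, hence $\int_I G(u/C)\,dt\le (1/C)R_G(u)\le (1/C)\cdot C=1$, and the characterization with $k=C$ gives $\LGnorm{u}\le C=\max\{C,1\}$.

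For part (2), the key tool is the consequence of Proposition~\ref{lemma:delat2charact}, namely that for every $\alpha\in\R$ there is $C_\alpha>0$ with $G(\alpha x)\le C_\alpha+K_1(\alpha)G(x)$ for all $x\in\R^N$. Assuming $\LGnorm{u}\le C$, the characterization gives $\int_I G(u/C)\,dt\le 1$. Writing $G(u)=G\bigl(C\cdot(u/C)\bigr)$ and applying the above inequality with $\alpha=C$ and argument $u/C$, I obtain $G(u)\le C_C+K_1(C)\,G(u/C)$ pointwise. Integrating over $I$ and using $\int_I G(u/C)\,dt\le 1$ produces $R_G(u)\le\mu(I)C_C+K_1(C)$, which is exactly the claimed bound with $\widetilde{C}=C_C$.

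Finally, the equivalence of modular and norm boundedness follows with no further work: if $R_G(u)\le C$ for all $u\in K$, part (1) bounds $\LGnorm{u}\le\max\{C,1\}$ uniformly; conversely, if $\LGnorm{u}\le C$ for all $u\in K$, part (2) bounds $R_G(u)\le\mu(I)\widetilde{C}+K_1(C)$ uniformly. There is no genuine obstacle here, since each estimate is essentially one line once the correct tool is selected; the only points requiring care are the direction of the homogeneity inequality in part (1) (applied to $1/C<1$) and, in part (2), the need for the additive constant $C_\alpha$, which absorbs the region $|x|<M_1$ on which the pure $\Delta_2$ estimate of Proposition~\ref{lemma:delat2charact} is not asserted.
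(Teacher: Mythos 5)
Your proof is correct and takes essentially the same approach as the paper: part (1) is identical, and in part (2) the pointwise inequality $G(Cx)\le C_C+K_1(C)\,G(x)$ you invoke is exactly what the paper re-derives inline by splitting $I$ into $I_1=\{t\colon |u(t)|\le M_1C\}$ and its complement, so your constant $C_C$ coincides with the paper's $\widetilde{C}$. The final equivalence is, in both cases, an immediate consequence of the two quantitative estimates.
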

\begin{proof}
Assume that $R_G(u)\leq C$. If $C\leq 1$ then $\LGnorm{u}\leq 1$. If $C>1$ then
\[
    \int_I G\left(\frac{u}{C}\right)\,dt\leq \frac1C\int_IG(u)\,dt\leq 1.
\]
This implies $\LGnorm{u}\leq \max\{C,1\}$. For the second statement, assume $\LGnorm{u}\leq~C$.
Then
\[
    R_G(u)
    =
    \int_{I_1} G\left(u\right)\, dt
    +
    \int_{I\setminus I_1} G\left(C\,\frac{ u}{C}\right)\, dt
    \leq
    \mu(I_1)\, \widetilde{C}+K_1(C)\int_I G\left(\frac{u}{C}\right)\,dt,
\]
where $I_1=\{t\in I\colon |u(t)|\leq M_1 C\}$ and $\widetilde{C}>0$. To finish the proof
observe that
\[
    \int_I G\left(\frac{u}{C}\right)\,dt\leq \int_I G\left(\frac{u}{\LGnorm{u}}\right)\,dt=1.
\]
\end{proof}

\begin{definition}
We say that  a sequence of functions $u_k\in \LGspace$ is modular convergent to $u\in \LGspace$
if $R_G(u_k-u)\to 0$ as $k\to\infty$.
\end{definition}

Modular convergence is sometimes called mean convergence. Norm convergence always implies modular
convergence. Let $\LGnorm{u_k}\to 0$ as $k\to\infty$. We can assume that $\forall_{k}\
\LGnorm{u_k}\leq 1$, then
\begin{equation*}
\frac{1}{\LGnorm{u_k}}R_G(u_k)\leq R_G\Big(\frac{u_k}{\LGnorm{u_k}}\Big) = 1.
\end{equation*}
Hence $0\leq R_G(u_k)\leq \LGnorm{u_k}$. In general, converse is not true unless $G$ satisfies
$\Delta_2$ condition. (see \cite{KraRut61,Sch05}).

\begin{theorem}
\label{modularc=normc}
Norm convergence is equivalent to modular convergence.
\end{theorem}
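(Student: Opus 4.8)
One direction is already essentially done in the excerpt: the text just before the theorem shows that norm convergence implies modular convergence (for any $G$, without $\Delta_2$). So the real content is the converse: assuming $G$ satisfies $\Delta_2$, I must show that $R_G(u_k-u)\to 0$ implies $\WLGnorm{u_k-u}\to 0$, i.e. $\LGnorm{u_k-u}\to 0$. Setting $w_k=u_k-u$, it suffices to show: if $R_G(w_k)\to 0$ then $\LGnorm{w_k}\to 0$.

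**The key idea is to exploit $\Delta_2$ to control $R_G(\alpha w_k)$ by $R_G(w_k)$ for a fixed large constant $\alpha$.** Fix $\varepsilon>0$; I want to show $\LGnorm{w_k}\le\varepsilon$ for large $k$, which by the criterion $\int_I G(w/k)\,dt\le 1\iff \LGnorm{w}\le k$ reduces to showing $\int_I G(w_k/\varepsilon)\,dt\le 1$ eventually. Write $\alpha=1/\varepsilon$ and split the domain into $I_1=\{t: |w_k(t)|\le M_1\varepsilon\}$ and its complement. On the bad set $I\setminus I_1$, Proposition \ref{lemma:delat2charact} gives $G(\alpha w_k)=G(w_k/\varepsilon)\le K_1(1/\varepsilon)\,G(w_k)$, so that part is bounded by $K_1(1/\varepsilon)\,R_G(w_k)$, which tends to $0$. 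On the good set $I_1$, the integrand is bounded by $\sup_{|x|\le M_1}G(x)$, but naively this contributes a term of size $\mu(I)\sup_{|x|\le M_1}G(x)$ that does not vanish.

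**The main obstacle is therefore controlling the good set $I_1$,** and the resolution is to observe that the measure of the set where $|w_k|$ is not small must itself shrink. Concretely, I would first pass to showing convergence in measure: since $R_G(w_k)\to 0$ and $G$ is bounded below on $\{|x|\ge\eta\}$ by a positive constant (by \eqref{G:properties:nondecreasing} and continuity, $\underline{G}(\eta):=\inf_{|x|=\eta}G(x)>0$ for $\eta>0$), Chebyshev's inequality gives $\mu\{t:|w_k(t)|\ge\eta\}\le R_G(w_k)/\underline{G}(\eta)\to 0$. Hence $w_k\to 0$ in measure. Then on the good set I split further at a small threshold $\eta$: where $|w_k|<\eta$ the integrand $G(w_k/\varepsilon)$ is uniformly small (by continuity of $G$ and $G(0)=0$, choosing $\eta$ first), and where $\eta\le|w_k|$ the integrand is bounded by $\sup_{|x|\le M_1}G(x/\varepsilon)$ times the vanishing measure $\mu\{|w_k|\ge\eta\}$. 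Combining the three pieces and sending $k\to\infty$ yields $\int_I G(w_k/\varepsilon)\,dt\le 1$ for large $k$, giving $\LGnorm{w_k}\le\varepsilon$; since $\varepsilon$ was arbitrary, $\LGnorm{w_k}\to 0$. The delicate bookkeeping is the order of quantifiers: $\varepsilon$ fixes $\alpha=1/\varepsilon$ and $K_1(1/\varepsilon)$, then $\eta$ is chosen small depending on $\varepsilon$, and only then is $k$ taken large.
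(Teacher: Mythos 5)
Your overall architecture (reduce to showing $\int_I G(w_k/\varepsilon)\,dt\le 1$ for large $k$, then split $I$ into a $\Delta_2$ region, an intermediate region of vanishing measure, and a region where the integrand is uniformly small) is sound, but the pivotal step fails for the class of G-functions this paper allows. Your convergence-in-measure argument needs $\underline{G}(\eta)=\inf_{|x|=\eta}G(x)>0$ for every $\eta>0$, and conditions \ref{G:0in0}--\ref{G:nabla2} do not give this: Section \ref{sec:G} explicitly admits $G$ vanishing on a neighbourhood of the origin, e.g.\ $G(x)=0$ for $|x|\le 1$, $G(x)=|x|^2-1$ for $|x|>1$. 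For that $G$ and $\eta\le 1$ Chebyshev yields nothing, and indeed $R_G(w_k)\to 0$ does not imply $w_k\to 0$ in measure: the constant sequence $w_k\equiv v_0$ with $|v_0|=1$ has $R_G(w_k)=0$. Moreover this gap cannot be patched, because the theorem itself is false for such $G$: for this same sequence $\int_I G(v_0/\alpha)\,dt=\mu(I)(\alpha^{-2}-1)$ when $\alpha<1$, hence $\LGnorm{w_k}=(1+1/\mu(I))^{-1/2}>0$ for every $k$ --- modular convergence without norm convergence. So your proof is correct exactly under the extra hypothesis $G(x)>0$ for all $x\ne 0$ (automatic for N-functions, not assumed here), and that hypothesis must be stated. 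There is also a smaller slip: you invoke Proposition \ref{lemma:delat2charact} on $\{|w_k|>M_1\varepsilon\}$, but it requires $|w_k|\ge M_1$, so the region $\{M_1\varepsilon\le |w_k|<M_1\}$ is uncovered as written; this one is cosmetic --- split at $M_1$ instead and let the intermediate region be $\{\eta\le |w_k|<M_1\}$, which your vanishing-measure estimate already handles.

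For comparison, the paper's proof performs only your first split, $I_{1,k}=\{|u_k|\le M_1\}$ and its complement, bounds $\int_I G(u_k/\varepsilon)\,dt\le \mu(I)\,C_{M_1}+K_1(1/\varepsilon)R_G(u_k)\le \mu(I)\,C_{M_1}+1=:C$ for large $k$, and concludes $\LGnorm{u_k}\le C\varepsilon$ from Lemma \ref{lem:modularboundness}. But the constant there is $C_{M_1}=\sup_{|x|\le M_1}G(x/\varepsilon)$, which depends on $\varepsilon$: by supercoercivity \ref{G:supercoercivity}, $\varepsilon\,C_{M_1}\to\infty$ as $\varepsilon\to 0$, so the bound $\LGnorm{u_k}\le C(\varepsilon)\,\varepsilon$ does not tend to $0$ and the stated conclusion does not follow as written. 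This is precisely the obstacle you flagged (the good-set term ``does not vanish''), and your third, measure-theoretic region is the standard repair. So your proposal is not merely a different route: granted positivity of $G$ off the origin, it is a complete argument, whereas the paper's two-region proof is not --- consistent with the fact that, without some positivity, no proof could exist.
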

\begin{proof}
We need only to prove that modular convergence implies norm convergence. Fix $\varepsilon>0$ and
assume that $\{u_k\}$ is modular convergent to $0$. Define
\[
I_{1,k}=\{t\in I\colon |u_k(t)|\leq M_1\}
\]
Since $G$ satisfies $\Delta_2$, for all $k>0$ we have
\begin{multline*}
\int_I G(u_k/\varepsilon)\, dt
\leq
\mu(I_{1,k})\,C_{M_1}+K_1(1/\varepsilon)\int_{I\setminus I_{1,k}} G(u_k)\, dt
\leq 
\mu(I)\,C_{M_1}+K_1(1/\varepsilon)\int_I G(u_k)\, dt.
\end{multline*}
For sufficiently large $k$ we have
\[
\int_I G(u_k)\, dt \leq \frac{1}{K_1(1/\varepsilon)}
\]
and
\[
\int_I G(u_k/\varepsilon)\, dt\leq \mu(I)\,C_{M_1}+1 = C.
\]
Finally, Lemma \ref{lem:modularboundness} shows that $\LGnorm{u_k}\leq C\varepsilon$ and hence
$\LGnorm{u_k}\to 0$.
\end{proof}

It is standard result due to Riesz that for $f_n$, $f\in\Lpspace$
\[
f_n\to f \text{ a.e. }\implies \Lpnorm{f_n}\to \Lpnorm{f} \iff \Lpnorm{f_n-f}\to 0.
\]
Following lemmas establish Orlicz space version of this fact.

\begin{lemma}
    \label{lem:BreLie}
    For every $k>1$ and $0<\varepsilon<\frac1k$ and $x,y\in\R^n$
\[
  |G(x+y)-G(x)|\leq \varepsilon|G(kx)-kG(x)|+2G(C_\varepsilon y)
\]
where $C_\varepsilon=\frac{1}{\varepsilon(k-1)}$
\end{lemma}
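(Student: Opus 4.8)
The plan is to prove the two one-sided estimates separately, after first observing that the absolute value on the right-hand side is cosmetic: since $k>1$, the elementary inequality $kG(x)\le G(kx)$ (valid for $k\ge 1$ because $G$ is convex with $G(0)=0$; see the homogeneity estimates in Section~\ref{sec:G}) gives $|G(kx)-kG(x)|=G(kx)-kG(x)\ge 0$. Throughout I abbreviate $\lambda=\varepsilon(k-1)$ and note that the hypothesis $0<\varepsilon<\tfrac1k$ forces $\lambda\in(0,1)$ together with $C_\varepsilon=\lambda^{-1}$.

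For the upper estimate $G(x+y)-G(x)\le\varepsilon(G(kx)-kG(x))+G(C_\varepsilon y)$ I would use a single application of convexity. The point $x+y$ can be written as the convex combination $x+y=\varepsilon\,(kx)+(1-\varepsilon k)\,x+\lambda\,(C_\varepsilon y)$: the three weights $\varepsilon$, $1-\varepsilon k$, $\lambda$ are nonnegative (here $1-\varepsilon k>0$ uses $\varepsilon<1/k$) and sum to $1$, and the identity $\lambda C_\varepsilon=1$ makes the last term equal to $y$. Convexity \ref{G:convex} then yields $G(x+y)\le\varepsilon G(kx)+(1-\varepsilon k)G(x)+\lambda G(C_\varepsilon y)$; subtracting $G(x)$, collecting the terms into $\varepsilon(G(kx)-kG(x))$, and using $\lambda<1$ on the last summand gives the claim.

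For the lower estimate $G(x)-G(x+y)\le\varepsilon(G(kx)-kG(x))+G(C_\varepsilon y)$ the same convex-combination trick is unavailable, since it would produce the shifted quantity $G(k(x+y))$ rather than $G(kx)$; so I would switch to the supporting-hyperplane and Fenchel machinery already set up in the paper. Choose $a\in\R^N$ supporting $G$ at $x$, that is $G(z)\ge G(x)+\inner{a}{z-x}$ for all $z$. Testing at $z=x+y$ gives $G(x)-G(x+y)\le\inner{a}{-y}$, while testing at $z=kx$ gives $\inner{a}{x}\le (k-1)^{-1}(G(kx)-G(x))$. The supporting inequality rewritten as $\inner{a}{z}-G(z)\le\inner{a}{x}-G(x)$ shows the supremum defining $G^\ast(a)$ is attained at $z=x$, so $G^\ast(a)=\inner{a}{x}-G(x)\le (k-1)^{-1}(G(kx)-kG(x))$. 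Finally I would rescale and invoke the Fenchel inequality together with evenness \ref{G:symmetric}: $\inner{a}{-y}=\lambda\inner{a}{-C_\varepsilon y}\le\lambda\big(G^\ast(a)+G(C_\varepsilon y)\big)$, and inserting the bound on $G^\ast(a)$ and using $\lambda<1$ produces exactly the desired estimate.

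Combining the two one-sided bounds yields $|G(x+y)-G(x)|\le\varepsilon(G(kx)-kG(x))+G(C_\varepsilon y)$, which is in fact stronger than the stated inequality, the factor $2$ leaving room to spare. The main obstacle, and the reason the argument is not purely symmetric, is precisely this asymmetry: convexity bounds $G$ from above by its chords (good for $G(x+y)-G(x)$) but from below only by its tangents (good for $G(x)-G(x+y)$), so the two directions genuinely call for different tools. The delicate point in the lower estimate is to control the subgradient $a$ by the \emph{unshifted} quantity $G(kx)-kG(x)$; this succeeds only because $a$ is chosen to support $G$ at $x$ rather than at $x+y$, which is what ties the bound on $G^\ast(a)$ to $G(kx)$ and keeps the coefficient equal to $\varepsilon$.
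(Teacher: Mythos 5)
Your proof is correct, and while your upper estimate coincides exactly with the paper's (the convex combination $x+y=(1-\varepsilon k)x+\varepsilon(kx)+\varepsilon(k-1)\,(C_\varepsilon y)$), your treatment of the reverse inequality is genuinely different. The paper handles it by the same elementary device: it writes $x$ itself as the convex combination
\[
x=\frac{1}{1+k\varepsilon}(x+y)+\frac{\varepsilon}{1+k\varepsilon}(kx)+\frac{\varepsilon(k-1)}{1+k\varepsilon}(-C_\varepsilon y),
\]
applies convexity and evenness, and rearranges to get $G(x)-G(x+y)\leq \varepsilon(G(kx)-kG(x))+\varepsilon(k-1)G(C_\varepsilon y)$. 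So your remark that the convex-combination trick is ``unavailable'' for this direction is mistaken --- it fails only if one insists on swapping the roles of $x$ and $x+y$; the correct move is to decompose $x$ rather than $x+y$, keeping $kx$ (not $k(x+y)$) as one of the vertices. Your alternative route through the supporting hyperplane at $x$, the identity $G^\ast(a)=\inner{a}{x}-G(x)$, the bound $G^\ast(a)\leq (k-1)^{-1}(G(kx)-kG(x))$, and the rescaled Fenchel inequality is a valid substitute: every ingredient (existence of subgradients for the finite convex $G$, the Fenchel inequality, evenness) is established in Section~\ref{sec:G}, and the arithmetic checks out, including the exact reappearance of the coefficient $\varepsilon$. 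What the paper's argument buys is brevity and self-containment --- two three-point convex combinations, no duality at all; what yours buys is some structural insight, namely that the quantity $G(kx)-kG(x)$ is precisely what controls the conjugate value $G^\ast(a)$ at any subgradient $a$ of $G$ at $x$, which explains conceptually why the unshifted point $kx$ suffices. Both arguments in fact prove the inequality with constant $1$ in place of $2$ in front of $G(C_\varepsilon y)$, so the stated factor $2$ is slack either way.
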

\begin{proof}
The proof is due to Brezis and Lieb \cite{BreLie83} (see also \cite{KhaKoz15}). We repeat the
proof. Let $\alpha=1-k\varepsilon$, $\beta=\varepsilon$, $\gamma=\varepsilon(k-1)$.
Then $\alpha+\beta+\gamma=1$ and $x+y=\alpha x+\beta (kx) + \gamma (C_\varepsilon y)$. By convexity
\[
  G(x+y)\leq \alpha G(x)+\beta G(kx)+\gamma G(C_\varepsilon y).
\]
This implies that
\[
  G(x+y)-G(x)\leq \varepsilon(G(kx)-kG(x))+ G(C_\varepsilon y).
\]
For the reverse inequality let
\[
  \alpha=\frac{1}{1+k\varepsilon},\quad \beta = \frac{\varepsilon}{1+k\varepsilon},\quad
  \gamma=\frac{\varepsilon(k-1)}{1+k\varepsilon}.
\]
Then $x=\alpha(x+y)+\beta (kx)+\gamma (-C_\varepsilon y)$ and
\[
  G(x)-G(x+y)\leq \varepsilon(G(kx)-kG(x))+ \varepsilon (k-1)G(C_\varepsilon y).
\]
\end{proof}

\begin{lemma}
    \label{lem:untou_implies_RGuntoRGu}
  If $u_n\to u$ in $\LGspace$ then $R_G(u_n)\to R_G(u)$.
\end{lemma}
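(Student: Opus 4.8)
The plan is to apply the pointwise Brezis--Lieb estimate of Lemma \ref{lem:BreLie} with $x=u(t)$ and $y=u_n(t)-u(t)$, and then integrate over $I$. Fixing once and for all $k=2$ (any $k>1$ works, so $C_\varepsilon=\tfrac{1}{\varepsilon}$), Lemma \ref{lem:BreLie} gives, for every $0<\varepsilon<\tfrac12$ and a.e.\ $t\in I$,
\[
|G(u_n(t))-G(u(t))|\leq \varepsilon\,|G(2u(t))-2G(u(t))|+2G\big(C_\varepsilon(u_n(t)-u(t))\big).
\]
Integrating and using $|R_G(u_n)-R_G(u)|\leq \int_I|G(u_n)-G(u)|\,dt$ yields
\[
|R_G(u_n)-R_G(u)|\leq \varepsilon\int_I|G(2u)-2G(u)|\,dt+2\,R_G\big(C_\varepsilon(u_n-u)\big).
\]

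The two terms are then handled separately, and in the right order. First I would observe that the quantity $A:=\int_I|G(2u)-2G(u)|\,dt$ is finite and independent of both $n$ and $\varepsilon$: indeed $A\leq \int_I G(2u)\,dt+2\int_I G(u)\,dt$, and since $u\in\LGspace$ and $G$ satisfies $\Delta_2$, Proposition \ref{lemma:delat2charact} (together with the estimate used in Theorem \ref{th:Luxemburgisnorm}) shows $2u\in\LGspace$, so both integrals are finite. Hence, given $\eta>0$, I can choose $\varepsilon\in(0,\tfrac12)$ small enough that $\varepsilon A<\eta/2$. This freezes $\varepsilon$, and with it the constant $C_\varepsilon$.

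For this now-fixed $\varepsilon$, the second term is controlled by the hypothesis. Since $u_n\to u$ in $\LGspace$ we have $\LGnorm{C_\varepsilon(u_n-u)}=C_\varepsilon\LGnorm{u_n-u}\to 0$, so $C_\varepsilon(u_n-u)\to 0$ in norm. By Theorem \ref{modularc=normc} norm convergence implies modular convergence, whence $R_G\big(C_\varepsilon(u_n-u)\big)\to 0$ as $n\to\infty$. Thus there is $N$ such that $2R_G(C_\varepsilon(u_n-u))<\eta/2$ for all $n\geq N$, and combining the two bounds gives $|R_G(u_n)-R_G(u)|<\eta$ for $n\geq N$. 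As $\eta>0$ is arbitrary, $R_G(u_n)\to R_G(u)$.

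The only genuinely delicate point is the ordering of the estimates, which is the essence of the Brezis--Lieb method: the first term does not involve $n$ and is made small by shrinking $\varepsilon$, while the second, for the frozen $\varepsilon$, is driven to zero by the convergence hypothesis. I expect the main thing to check carefully is the finiteness of $A$, that is $2u\in\LGspace$, which rests on the $\Delta_2$ condition; without $\Delta_2$ this integral could diverge and the whole argument would fail.
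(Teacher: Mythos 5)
Your proof is correct and follows essentially the same route as the paper: the same application of Lemma \ref{lem:BreLie} with $x=u$, $y=u_n-u$, $k=2$, the same two-term decomposition with the term $\varepsilon\int_I|G(2u)-2G(u)|\,dt$ made small by $\Delta_2$-finiteness, and the remainder killed by the convergence hypothesis. The only cosmetic difference is bookkeeping: the paper drives $R_G\bigl(\tfrac{u_n-u}{\varepsilon}\bigr)$ to zero directly via the elementary estimate $R_G(v)\leq\LGnorm{v}$ for $\LGnorm{v}\leq 1$ (taking $\LGnorm{u_n-u}<\varepsilon^2$), whereas you fix $\varepsilon$ first and invoke Theorem \ref{modularc=normc}; both are valid.
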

\begin{proof}
    In Lemma \ref{lem:BreLie} set $x+y=u_n$, $x=u$, $k=2$. Then $\varepsilon<1/2$,
$C_\varepsilon=\frac1\varepsilon$ and
\[
  |G(u_n)-G(u)|\leq \varepsilon|G(2u)-2G(u)|+2G\left(\frac{u_n-u}{\varepsilon}\right).
\]
Since $u_n\to u$ in $\LGspace$, there exists $n_0$ such that for $n>n_0$ we have
$\LGnorm{u_n-u}<\varepsilon^2\leq \varepsilon < 1$. Thus
\[
\int_IG\left(\frac{u_n-u}{\varepsilon}\right)\,dt
\leq
\frac{1}{\varepsilon}\LGnorm{u_n-u}<\varepsilon.
\]
From this and inequality above we obtain
\[
  |R_G(u_n)-R_G(u)|\leq \varepsilon\int_I|G(2u)-2G(u)|\,dt+2\varepsilon.
\]
Letting $\varepsilon\to 0$ we have $R_G(u_n)\to R_G(u)$.
\end{proof}

Norm convergence $u_n\to u$ in $\Lpspace$ implies that there exists a subsequence such that
$u_{n_k}\to u$ a.e.  and $|u_{n_k}|\leq |h|\in \Lpspace$. According to the above lemma, if $u_n\to
u$
in $\LGspace$ then:
\begin{enumerate}
  \item Since $\LGspace\hookrightarrow \Lpspace[1]$ (see Lemma \ref{lem:LGembed} below), we can
  extract a subsequence $u_{n_k}$ such that
\[
  u_{n_k}\to u \text{ a.e and } |u_{n_k}|\leq h\in \Lpspace[1](I,\R).
\]
  \item Since $R_G(u_n-u)\to 0$, $G(u_n-u)\to 0$ in $\Lpspace[1]$. Thus we can extract a
  subsequence $\{u_{n_k}\}$ such that
\[
G(u_{n_k}-u)\to 0 \text{ a.e and } G(u_{n_k}-u)\leq h\in \Lpspace[1](I,\R).
\]
  \item Since $R_G(u_{n})\to R_G(u)$, $G(u_n)\to G(u)$ in $\Lpspace[1]$. Hence there exists a
  subsequence $\{u_{n_k}\}$ such that
\[
G(u_{n_k})\to G(u) \text{ a.e and } G(u_{n_k})\leq h\in \Lpspace[1](I,\R).
\]
\end{enumerate}

\begin{lemma}
    \label{lem:rieszforLG}
    Let $\{u_n\}\subset \LGspace$ and $u\in \LGspace$. Suppose that
    \begin{enumerate}
        \item $u_n\to u$ a.e.
        \item $R_G(u_n)\to R_G(u)$.
    \end{enumerate}
Then $u_n\to u$ in $\LGspace$.
\begin{proof}
This lemma was proved in \cite[p. 83]{RaoRen91} for N-functions. Since $G$ is convex, we get
$\frac12(G(u_n(t))+G(u(t)))-G(\frac{u_n(t)-u(t)}{2}) \geq 0$. Continuity of $G$ and $u_n\to u$ a.e.
implies
    \[
    \frac12(G(u_n(t))+G(u(t)))-G\left(\frac{u_n(t)-u(t)}{2}\right)\to G(u) \text{ a.e.}
    \]
So that by the Fatou Lemma, we have
\begin{multline*}
    \int_I G(u)\,dt
    \leq
    \liminf_{n\to \infty}\int_I \frac12(G(u_n)+G(u))\,dt-G\left(\frac{u_n-u}{2}\right)\, dt
    \leq \\ \leq
    \lim_{n\to \infty}\int_I\frac12(G(u_n)+G(u))\,dt-
    \limsup_{n\to \infty} \int_I G\left(\frac{u_n-u}{2}\right)\,dt
    =\\=
    \int_IG(u)\, dt-\limsup_{n\to \infty}\int_IG\left(\frac{u_n-u}{2}\right)\,dt.
\end{multline*}
This implies that
\[
    \int_I G\left(\frac{u_k(t)-u(t)}{2}\right)\,dt\to 0
\]
and $\LGnorm{u_k-u}\to 0$ by Theorem \ref{modularc=normc}.
\end{proof}
\end{lemma}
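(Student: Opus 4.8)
The plan is to exploit the convexity and evenness of $G$ to manufacture a nonnegative integrand that converges almost everywhere to $G(u)$, then combine Fatou's lemma with hypothesis (2) to force the ``defect'' term $\int_I G\bigl(\frac{u_n-u}{2}\bigr)\,dt$ to vanish, and finally convert this modular statement into norm convergence via Theorem \ref{modularc=normc}.

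First I would record the pointwise inequality
\[
\frac12\bigl(G(u_n(t))+G(u(t))\bigr)-G\Bigl(\frac{u_n(t)-u(t)}{2}\Bigr)\geq 0 .
\]
This holds because $G$ is even, so $G(-u(t))=G(u(t))$, and then convexity gives $G\bigl(\frac{u_n(t)+(-u(t))}{2}\bigr)\leq \frac12\bigl(G(u_n(t))+G(u(t))\bigr)$. Since $u_n\to u$ a.e.\ and $G$ is continuous (being convex and finite on $\R^N$), the subtracted term tends to $G(0)=0$ a.e.\ while the averaged term tends to $G(u(t))$; hence the whole expression converges a.e.\ to $G(u(t))$.

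Second I would apply Fatou's lemma to this nonnegative sequence,
\[
\int_I G(u)\,dt\leq \liminf_{n\to\infty}\int_I\Bigl[\tfrac12\bigl(G(u_n)+G(u)\bigr)-G\Bigl(\tfrac{u_n-u}{2}\Bigr)\Bigr]\,dt .
\]
Now $\int_I \tfrac12\bigl(G(u_n)+G(u)\bigr)\,dt=\tfrac12\bigl(R_G(u_n)+R_G(u)\bigr)\to R_G(u)$ is a genuine limit by hypothesis (2). Writing $A_n$ for this averaged integral and $B_n=\int_I G\bigl(\tfrac{u_n-u}{2}\bigr)\,dt\geq 0$, the convergence of $A_n$ lets me split $\liminf_n(A_n-B_n)=\lim_n A_n-\limsup_n B_n=R_G(u)-\limsup_n B_n$. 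Comparing with the left-hand side $R_G(u)$ yields $\limsup_n B_n\leq 0$, and since $B_n\geq 0$ this forces $B_n\to 0$, i.e.\ $\int_I G\bigl(\tfrac{u_n-u}{2}\bigr)\,dt\to 0$.

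Third I would translate this into norm convergence: the relation $R_G\bigl(\tfrac{u_n-u}{2}\bigr)\to 0$ says exactly that $\tfrac{u_n-u}{2}$ is modular convergent to $0$, so by Theorem \ref{modularc=normc} it is norm convergent, giving $\LGnorm{u_n-u}\to 0$. The step I expect to be the crux is the second one: the pointwise nonnegativity is precisely what makes Fatou applicable, and it is essential that hypothesis (2) supplies an actual limit (not merely a bound) for the averaged integral, since only then can the $\liminf$ be split to extract the clean sign condition $\limsup_n B_n\leq 0$. The only delicate bookkeeping is the identity $\liminf_n(A_n-B_n)=\lim_n A_n-\limsup_n B_n$ valid when $A_n$ converges; everything else (convexity, continuity, Fatou, and the $\Delta_2$-based equivalence of modular and norm convergence) is routine once this is in place.
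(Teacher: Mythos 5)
Your proposal is correct and follows essentially the same route as the paper: the same convexity inequality, Fatou's lemma combined with the limit $R_G(u_n)\to R_G(u)$ to force $\int_I G\bigl(\tfrac{u_n-u}{2}\bigr)\,dt\to 0$, and Theorem \ref{modularc=normc} to pass from modular to norm convergence. If anything, your write-up is slightly more careful than the paper's, since you make explicit both the use of evenness of $G$ (which the paper's phrase ``since $G$ is convex'' silently relies on) and the bookkeeping identity $\liminf_n(A_n-B_n)=\lim_n A_n-\limsup_n B_n$ that justifies splitting the limit.
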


As a consequence we obtain dominated convergence theorem for anisotropic Orlicz spaces:

\begin{theorem}
    \label{thm:dominatedconvergence}
    Suppose that $\{u_n\}\subset \LGspace$ and
    \begin{enumerate}
      \item $u_n\to u$ a.e.
      \item there exists $h\in \Lpspace[1]$ such that $G(u_n)\leq h$ a.e.
    \end{enumerate}
Then $u\in \LGspace$ and $u_n\to u$ in $\LGspace$.
\end{theorem}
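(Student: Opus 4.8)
The plan is to reduce everything to Lemma \ref{lem:rieszforLG}, whose hypotheses are (1) a.e.\ convergence $u_n\to u$ and (2) convergence of the modulars $R_G(u_n)\to R_G(u)$. Hypothesis (1) is given by assumption, so the only work is to establish that $u\in\LGspace$ and that $R_G(u_n)\to R_G(u)$. Both of these will follow from the pointwise domination $G(u_n)\leq h$ together with the continuity of $G$ and the \emph{classical} (scalar) Lebesgue dominated convergence theorem in $\Lpspace[1](I,\R)$.

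First I would record the pointwise consequences of the assumptions. Since $G$ is continuous (being convex and finite on $\R^N$, hence locally Lipschitz) and $u_n\to u$ a.e., we get $G(u_n)\to G(u)$ a.e. Passing to the limit in the inequality $G(u_n)\leq h$ shows $G(u)\leq h$ a.e., so
\[
R_G(u)=\int_I G(u)\,dt\leq \int_I h\,dt<\infty,
\]
which gives $u\in\LGspace$. Next, the nonnegative measurable functions $G(u_n)$ converge a.e.\ to $G(u)$ and are all dominated by the fixed $\Lpspace[1]$ function $h$; the classical dominated convergence theorem then yields $\int_I G(u_n)\,dt\to\int_I G(u)\,dt$, that is, $R_G(u_n)\to R_G(u)$. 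With both hypotheses of Lemma \ref{lem:rieszforLG} now verified, I conclude $u_n\to u$ in $\LGspace$, which completes the proof.

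I do not expect a genuine obstacle here: the substance of the argument lives in Lemma \ref{lem:rieszforLG}, and the present theorem is essentially a packaging of that lemma together with the scalar Lebesgue DCT. The one conceptual point worth emphasizing — and the reason the domination is stated as $G(u_n)\leq h$ rather than as the naive $|u_n|\leq h$ — is exactly that $G$-domination is what controls the integrands $G(u_n)$ needed to run the scalar DCT. In the anisotropic setting $G$ is not monotone in $|\cdot|$ (as the examples in Section \ref{sec:Ospace} show, $|u|<|v|$ together with $G(u)<G(v)$ may still fail to give $\LGnorm{u}\leq\LGnorm{v}$), so a bound on $|u_n|$ would not transfer to a usable bound on $G(u_n)$; the hypothesis $G(u_n)\leq h$ is precisely the correct anisotropic replacement.
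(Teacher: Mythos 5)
Your proposal is correct and follows essentially the same route as the paper: continuity of $G$ gives $G(u_n)\to G(u)$ a.e., domination gives $G(u)\leq h$ hence $u\in\LGspace$, modular convergence $R_G(u_n)\to R_G(u)$ follows from domination, and Lemma \ref{lem:rieszforLG} upgrades this to norm convergence. The only cosmetic difference is that you invoke the scalar dominated convergence theorem as a black box, while the paper reproves it inline by applying Fatou's lemma to $h\pm G(u_n)$.
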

\begin{proof}
Since $G$ is continuous and $u_n\to u$ a.e., $G(u_n)\to G(u)$ a.e. It follows that $G(u)\leq h$
a.e. Thus $G(u)\in \Lpspace[1]$ and hence $u\in \LGspace$. Since $0\geq
h\pm G(u_n)$ and $h\pm G(u_n)\to h\pm G(u)$ a.e., application of the Fatou Theorem yields
\[
  \int_I h\,dt\pm \int_I G(u_n)\,dt\leq \liminf \int_I h\,dt\pm G(u_n)\,dt.
\]
Therefore,
\begin{align*}
  \int_I h\,dt + \int_I G(u)\,dt\leq \int_I h\,dt + \liminf \int_I G(u_n)\,dt  \\
  \int_I h\,dt - \int_I G(u)\,dt\leq \int_I h\,dt - \limsup \int_I G(u_n)\,dt
\end{align*}
and hence
\[
  \limsup \int_I G(u_n)\,dt \leq \int_I G(u)\,dt\leq \liminf \int_I G(u_n)\,dt
\]
and $R_G(u_n)\to R_G(u)$. By the Lemma \ref{lem:rieszforLG}, $u_n\to u$ in $\LGspace$.
\end{proof}

In the above theorem, assumption $G(u_n)\leq h$ can be replaced by $G(u_n)\leq G(h)$,
$h\in\LGspace$.
Consider a sequence $\{u_n\}\subset \LGspace$ convergent pointwise to measurable function $u$. Under
standard dominance condition (i.e.  $|u_n|\leq |g|$, $g\in \LGspace$) it is not true in general
that $u_n\to u \in \LGspace$.

\begin{example}
Let $G(x,y)=x^2+y^4$, $I=(0,1)$, $u(t)=(0,t^{-1/4}) \text{ and } h(t)=(t^{-3/8},0)$. Define
\[
u_n(t)=\begin{cases}
         u(t)&|u(t)|\leq n\\
         0 & |u(t)|> n
       \end{cases}
\]
Then $u_n\to u$ a.e., $u_n,h\in \LGspace$ and $|u_n|\leq |h|$ for every $t$. But
$G(u(t))=t^{-1}\notin \Lpspace[1](I,\R)$. Hence $u\notin \LGspace$.
\end{example}

\begin{remark}
    Modular $R_G$ is called monotone modular if $|x|\leq |y|$ implies $R_G(x)\leq R_G(y)$. If $R_G$
is monotone modular then $u_k\to u$ a.e and $|u_k|\leq |g|$, $g\in \LGspace$ implies $u\in
\LGspace$ and
$\LGnorm{u_k-u}\to 0$. We refer the reader to \cite{KhaKoz15} for more details.
\end{remark}

\subsection{Separability}

For every $u \in\LGspace$ there exists a sequence of bounded functions $\{u_n\}\subset \LGspace$
such that $u_n\to u$ in $\LGspace$. For example, one can define
\[
u_n(t)=\begin{cases}u(t)&|u(t)|\leq n\\0&|u(t)|>n\end{cases}
\]
In this case $u_n\to u$ a.e and $G(u_n(t)-u(t))\leq G(u(t))$. Therefore, by Theorem
\ref{thm:dominatedconvergence} we get $u_n\to u$ in $\LGspace$.

\begin{theorem}[{cf. \cite[p. 81]{KraRut61}}]
  The space $\LGspace$ is separable.
\end{theorem}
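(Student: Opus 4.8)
The plan is to produce an explicit countable dense subset and to let the anisotropic dominated convergence theorem do the analytic work. As the candidate I would take the rational step functions
\[
\mathcal{D}=\Big\{\textstyle\sum_{j=1}^{k} q_j\,\chi_{J_j}\colon k\in\mathbb N,\ q_j\in\mathbb{Q}^n,\ J_j\subset I \text{ an interval with rational endpoints}\Big\},
\]
which is plainly countable. It then remains to show that $\mathcal{D}$ is dense in $\LGspace$. I would first reduce to bounded functions: the truncation argument recorded just before the statement shows that the truncations $u_N$ converge to $u$ in $\LGspace$ for every $u\in\LGspace$, so it suffices to approximate an arbitrary bounded $w\in\LGspace$, say $|w|\le N$ a.e., by elements of $\mathcal{D}$.

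For such a $w$ the goal is to produce $s_m\in\mathcal{D}$ with $s_m\to w$ almost everywhere and $\sup_m\norm{s_m}_\infty<\infty$; this is pure measure theory and I would keep it clean by avoiding any manipulation of irrational breakpoints. Concretely, I would take the averages of $w$ over a refining sequence of partitions of $I$ into rational subintervals: by the Lebesgue differentiation theorem these averages converge to $w$ a.e., they are step functions with rational breakpoints, and averaging does not increase the norm, so they remain bounded by $N$. Finally I would round the finitely many vector values of each averaged step function to nearby points of $\mathbb{Q}^n$, with rounding error tending to $0$; this preserves both the uniform bound and a.e. convergence and lands the approximants in $\mathcal{D}$.

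The last step invokes Theorem \ref{thm:dominatedconvergence}, and this is the one point where the anisotropy genuinely intervenes. The familiar $\Lpspace$ reasoning ``$|s_m-w|\le h\in\Lpspace[1]$ forces convergence'' is unavailable here, since the Luxemburg norm is not monotone and dominance must be phrased through $G$ rather than through $|\cdot|$. However, because $|s_m-w|$ is uniformly bounded by some constant $C$ and $G$ is continuous, one has $G(s_m-w)\le \max_{|z|\le C}G(z)=:h$, a constant, and $h\in\Lpspace[1](I,\R)$ since $I$ is finite. Thus the hypotheses of Theorem \ref{thm:dominatedconvergence} are met and $s_m\to w$ in $\LGspace$, so $\mathcal{D}$ is dense. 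I expect the only real obstacle to be the bookkeeping of the second paragraph—keeping the rationalised approximants uniformly bounded while preserving a.e. convergence—rather than anything Orlicz-specific, precisely because the boundedness of the approximants neutralises the non-monotonicity of the norm and reduces the $G$-dominance to a trivial constant bound.
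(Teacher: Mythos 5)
Your proof is correct, and it takes a genuinely different route from the paper's. Both arguments start with the same reduction: bounded functions are dense by truncation together with Theorem \ref{thm:dominatedconvergence}. From there the paper uses Luzin's theorem to replace a bounded $u$ by a continuous $u_1$ agreeing with $u$ off a set of small measure, establishing $\LGnorm{u-u_1}\leq\varepsilon$ through a direct modular estimate $\int_I G\left(\frac{u-u_1}{\varepsilon}\right)dt\leq \mu(I\setminus I_1)\,C\leq 1$, and then approximates continuous functions uniformly by polynomials with rational coefficients; its countable dense set is thus the rational polynomials. You instead take rational step functions as the dense set, manufacture approximants by averaging over refining rational partitions (the Lebesgue differentiation theorem gives a.e.\ convergence, and averaging preserves the uniform bound $N$), round to rational values, and then invoke Theorem \ref{thm:dominatedconvergence} a second time, with the crucial observation that uniform boundedness of $s_m-w$ makes $G(s_m-w)$ dominated by a constant, integrable because $\mu(I)<\infty$. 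What your route buys: it is entirely self-contained within the paper's own toolkit (no Luzin, no Weierstrass---only measure theory plus the already-proved dominated convergence theorem), and it isolates precisely why anisotropy is harmless here, namely that uniform boundedness of the error reduces $G$-dominance to a trivial constant bound. What the paper's route buys: it proves the stronger structural fact that continuous functions, and indeed polynomials, are dense in $\LGspace$, and its Luzin step is quantitative rather than an appeal to a convergence theorem. One piece of bookkeeping you flag but should actually carry out: if $I$ has irrational endpoints it cannot be partitioned exactly into intervals with rational endpoints, so define your approximants to vanish on the two shrinking edge pieces; since these have measure tending to zero, a.e.\ convergence and the uniform bound are unaffected.
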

\begin{proof}
Fix $\varepsilon>0$. Suppose that $u\in \LGspace$ is bounded and $|u(t)|\leq a$. Set
$C=\sup\{G(x/\varepsilon)\colon |x|\leq 2a\}$.

By the Luzin theorem we can find a compact subset $I_1\subset I$ and a continuous function
$u_1\colon I\to \R^N$ such that $\mu(I\setminus I_1)\leq 1/C$, $u(t)=u_1(t)$ for all $t\in I_1$ and
$|u_1(t)|\leq a$. Now we get
\[
\int_I G\left(\frac{u-u_1}{\varepsilon}\right)\,dt
=
\int_{I\setminus I_1} G\left(\frac{u-u_1}{\varepsilon}\right)\,dt
\leq \mu(I\setminus I_1) C\leq 1.
\]
Hence $\LGnorm{u-u_1}\leq \varepsilon$. For arbitrary $v\in \LGspace$ we can find a bounded $u_1\in
\LGspace$ such that $\LGnorm{v-u}\leq \varepsilon/2$. Thus
\[
  \LGnorm{u-u_1}\leq \varepsilon.
\]
For every continuous function there exists uniformly convergent sequence of polynomials with
rational coefficients.  It is easy to check that uniform convergence implies norm convergence in
$\LGspace$. This completes the proof.
\end{proof}

\begin{remark}
It is well known that if G-function does not satisfies $\Delta_2$ condition then $\LGspace$ is
not separable. One can define a subspace $E^G$ as  the closure of bounded functions
under Luxemburg norm. In this case, the space $E^G$ is a proper subset of $\LGspace$ and is always
separable (see \cite{KraRut61,Sch05}).
\end{remark}

\subsection{Embeddings}
We will use the symbols $\hookrightarrow$ nad $\hookrightarrow\hookrightarrow$ for, respectively,
continuous and compact embeddings. Recall that
\[
  F\prec G \iff F(x)\leq G(Kx), |x|\geq M.
\]
and
\[
F\prec\prec G \iff \lim_{x\to\infty} \frac{G(\alpha x)}{G(x)}=\infty, \text{ for all } \alpha>0.
\]

Next two theorems provide a basic embeddings for Orlicz spaces.

\begin{proposition}
Assume that $F\prec G$. Then $L^G\hookrightarrow L^F$
and
\[
\LGnorm[F]{u}\leq K(C \mu(I)+1)\LGnorm{u}.
\]
for some $C>0$.
\end{proposition}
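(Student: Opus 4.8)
The plan is to control the $F$-modular of a rescaled copy of $u$ and then read the norm bound straight off the definition of the Luxemburg norm, so that no $\Delta_2$ hypothesis on $F$ is needed. Assume $u\neq 0$ and set $\alpha=\LGnorm{u}>0$; by the identity recorded for the Luxemburg norm, $\int_I G(u/\alpha)\,dt=1$. From $F\prec G$ fix $M\ge 0$ and $K>0$ with $F(x)\le G(Kx)$ whenever $|x|\ge M$. The decisive choice is the scale $\beta=K\alpha$, since then $G(Ku/\beta)=G(u/\alpha)$ identically, which is exactly what lets the large-argument part of the $F$-integral be absorbed by the $G$-modular of $u/\alpha$.

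First I would split $I$ at the threshold $M$. On $A=\{t:|u(t)|/\beta\ge M\}$ the dominance inequality applies pointwise, giving $F(u/\beta)\le G(Ku/\beta)=G(u/\alpha)$, hence $\int_A F(u/\beta)\,dt\le\int_I G(u/\alpha)\,dt=1$. On $B=I\setminus A$ I would use that $F$, being convex and finite on $\R^N$, is continuous and therefore bounded on the ball $\{|y|\le M\}$; putting $C=\sup\{F(y):|y|\le M\}<\infty$ yields $\int_B F(u/\beta)\,dt\le C\,\mu(I)$. Adding the two pieces gives $\int_I F(u/\beta)\,dt\le C\mu(I)+1$, and I write $D=C\mu(I)+1\ge 1$.

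To finish I would rescale by $D$. Since $D\ge 1$, the homogeneity estimate $F(\lambda x)\le\lambda F(x)$ for $0\le\lambda\le 1$ (a consequence of convexity and $F(0)=0$) gives $\int_I F\big(u/(D\beta)\big)\,dt\le D^{-1}\int_I F(u/\beta)\,dt\le 1$. By the definition of the infimum this forces $\LGnorm[F]{u}\le D\beta=K(C\mu(I)+1)\LGnorm{u}$, which is precisely the claimed estimate and identifies the constant. In particular the finiteness of the rescaled $F$-modular shows $u\in L^F$, and the embedding $L^G\hookrightarrow L^F$ is continuous because the bound is linear in $\LGnorm{u}$.

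I expect the only genuine subtlety to be the estimate on $B$, which needs $F$ to be bounded on bounded sets, i.e. finite-valued on $\R^N$. This is where the standing assumption that $F$ is a finite convex G-function of our class is used; if $F$ were allowed to take the value $+\infty$ on a bounded set, both this step and the conclusion would fail. Everything else is mechanical given the Luxemburg-norm identity and the convexity estimates already established.
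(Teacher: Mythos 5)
Your argument is correct and is essentially the paper's own proof: the same splitting of $I$ at the threshold $M$ for the rescaled function $u/(K\LGnorm{u})$, the same bound of the $F$-modular by $\sup\{F(y)\colon |y|\leq M\}\,\mu(I)+1$, and the same convexity rescaling producing the constant $K(C\mu(I)+1)$; your write-up in fact repairs two slips in the paper's version, where the roles of $I_1$ and $I\setminus I_1$ are interchanged and the small-argument constant is written as a supremum of $G$ rather than of $F$. The one caveat concerns your closing remark: under the paper's definition $\LGspace[F]=\{u\colon \int_I F(u)\,dt<\infty\}$, finiteness of $\int_I F\bigl(u/(D\beta)\bigr)\,dt$ does not by itself yield $u\in\LGspace[F]$, since scaling back up from $u/(D\beta)$ to $u$ is precisely where a $\Delta_2$ condition on $F$ would be needed; thus your claim that no $\Delta_2$ hypothesis on $F$ enters is accurate for the norm inequality but not for this membership step. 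If you want the set inclusion without assuming $\Delta_2$ for $F$, argue it directly: $\int_I F(u)\,dt\leq \mu(I)\sup_{|x|\leq M}F(x)+\int_I G(Ku)\,dt<\infty$, the last integral being finite by the $\Delta_2$ condition for $G$ (Proposition \ref{lemma:delat2charact}).
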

\begin{proof}
It is evident that $\LGspace\subset \LGspace[F]$. Let $u\in \LGspace$ and set
\[
I_1=\Big\{t\in I: \Big|\frac{u(t)}{K\LGnorm{u}}\Big|\leq M\Big\}
\]
For every $t\in I_1$, we have
\[
F\Big(\frac{u(t)}{K\LGnorm{u}}\Big)\leq G\Big(\frac{u(t)}{\LGnorm{u}}\Big)
\]
and
\begin{multline*}
\int_I F\Big(\frac{u}{K\LGnorm{u}}\Big)dt
=
\int_{I\setminus I_1}F\Big(\frac{u}{K\LGnorm{u}}\Big)dt
+
\int_{I_1}F\Big(\frac{u}{K\LGnorm{u}} \Big)dt
\leq \\ \leq
    \mu(I\setminus I_1)\widetilde{C}+
    \int_I G\Big(\frac{u}{\LGnorm{u}}\Big)dt
\leq
    \mu(I)\widetilde{C}+1,
\end{multline*}
where $\widetilde{C}=\sup\{G(x)\colon |x|\leq M\}$. Since  $1\leq \widetilde{C}\mu(I)+1$, we have
\[
\int_I F\Big(\frac{u(t)}{K(\widetilde{C}\mu(I)+1)\LGnorm{u}}\Big)dt\leq 1.
\]
Finally,
\[
\LGnorm[F]{u}\leq K(\widetilde{C}\mu(I)+1)\LGnorm{u}.
\]

\end{proof}

It is easy to see that there exist constants $C_1,C_2>0$ such that $\Lpnorm[1]{u}\leq C_1
\LGnorm{u}$ and  $\LGnorm{u}\leq C_2 \Lpnorm[\infty]{u}$.

Directly from Lemma \ref{lemma:xp<G<xq} we obtain that Orlicz spaces can be viewed as
a spaces between two Lebesgue spaces determined by constants in $\Delta_2$ and $\nabla_2$
conditions.

\begin{proposition}
    \label{lem:LGembed}
    For every $G$ there exists $p,q\in (1,\infty)$ such that
    \[
    \Lpspace[q]\hookrightarrow \LGspace \hookrightarrow \Lpspace[p].
    \]
In particular $\Lpspace[\infty]\hookrightarrow \LGspace \hookrightarrow\hookrightarrow \Lpspace[1]$.
\end{proposition}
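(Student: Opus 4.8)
The plan is to read off both inclusions directly from the polynomial two-sided bound of Lemma~\ref{lemma:xp<G<xq} combined with the preceding comparison proposition, which asserts that for two G-functions $A\prec B$ one has $\LGspace[B]\hookrightarrow\LGspace[A]$ continuously. By Lemma~\ref{lemma:xp<G<xq} there are exponents $p,q\in(1,\infty)$ with $p\le q$ and $|x|^p\prec G\prec |x|^q$. Applying the comparison proposition to $G\prec|x|^q$ (with $A=G$, $B=|x|^q$) gives $\Lpspace[q]\hookrightarrow\LGspace$, and applying it to $|x|^p\prec G$ (with $A=|x|^p$, $B=G$) gives $\LGspace\hookrightarrow\Lpspace$. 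The only thing to check is the identification of the Orlicz space generated by $|x|^r$ with the Lebesgue space $\Lpspace[r]$: since $|x|^r$ and $\tfrac1r|x|^r$ differ only by the positive constant factor $r$, they generate the same set of functions and equivalent Luxemburg norms, and the norm induced by $|x|^r$ is exactly the usual $\Lpspace[r]$ norm. This already proves the two-sided embedding $\Lpspace[q]\hookrightarrow\LGspace\hookrightarrow\Lpspace$.

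For the refinement, recall that $I$ is a finite interval, so H\"older's inequality gives the continuous Lebesgue inclusions $\Lpspace[\infty]\hookrightarrow\Lpspace[q]$ and $\Lpspace\hookrightarrow\Lpspace[1]$ with constants depending on $\mu(I)$. Composing with the two embeddings just established produces the continuous chain $\Lpspace[\infty]\hookrightarrow\Lpspace[q]\hookrightarrow\LGspace\hookrightarrow\Lpspace\hookrightarrow\Lpspace[1]$; in particular $\Lpspace[\infty]\hookrightarrow\LGspace$ and $\LGspace\hookrightarrow\Lpspace[1]$ are continuous. These recover the two scalar estimates $\Lpnorm[1]{u}\le C_1\LGnorm{u}$ and $\LGnorm{u}\le C_2\Lpnorm[\infty]{u}$ recorded just before the statement.

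The substantive claim, and the step I expect to be the main obstacle, is the \emph{compactness} of $\LGspace\hookrightarrow\Lpspace[1]$. The natural approach exploits superlinear growth: by \ref{G:supercoercivity} we have $|x|\prec\prec G$, so on a norm-bounded family, which by Lemma~\ref{lem:modularboundness} coincides with a modular-bounded family $\{R_G(u)\le C\}$, the de la Vall\'ee--Poussin criterion (with the superlinear $G$ as test function) yields equi-integrability in $\Lpspace[1]$; together with $\mu(I)<\infty$ this gives relative weak compactness in $\Lpspace[1]$ by the Dunford--Pettis theorem. The crux is upgrading weak compactness to strong (norm) compactness in $\Lpspace[1]$: modular bounds alone permit oscillation, so this last passage is delicate and is exactly the point at which one must invoke additional control on the admissible functions, of the type available in the Sobolev space $\WLGspace$, where a bound on $\dot u$ supplies the missing equicontinuity and makes a genuine compactness statement go through.
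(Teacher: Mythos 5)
Your handling of the two-sided continuous embedding is exactly the paper's route: the paper states Proposition \ref{lem:LGembed} with no proof at all, only the remark that it follows ``directly'' from Lemma \ref{lemma:xp<G<xq} and the comparison proposition $F\prec G\implies \LGspace\hookrightarrow\LGspace[F]$. Your two applications of that proposition, the identification of the Orlicz space generated by $|x|^r$ with $\Lpspace[r]$, and the finite-interval chain $\Lpspace[\infty]\hookrightarrow\Lpspace[q]$, $\Lpspace\hookrightarrow\Lpspace[1]$ are precisely the details the paper leaves implicit, and they are correct.

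On the compact embedding you are right to balk, and your diagnosis is sharper than the paper's. The claim $\LGspace\hookrightarrow\hookrightarrow\Lpspace[1]$ is false as stated: take $G(x)=\tfrac12|x|^2$, so that $\LGspace(I,\R^N)=\Lpspace[2](I,\R^N)$ with equivalent norms, and $u_n(t)=(\sin nt,0,\dots,0)$ on $I=[0,2\pi]$. This sequence is norm (hence modular) bounded, converges weakly to $0$ in $\Lpspace[1]$ by Riemann--Lebesgue, yet $\Lpnorm[1]{u_n}=4$ for every $n$, so no subsequence can converge strongly in $\Lpspace[1]$; oscillation defeats compactness exactly as you say, and Dunford--Pettis weak compactness is the most that boundedness can give. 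The paper offers no argument for the double arrow; the only conceivable in-paper source is the subsequent theorem ``$F\prec\prec G\implies\LGspace\hookrightarrow\hookrightarrow\LGspace[F]$'' applied with $F(x)=|x|$ (using $|x|\prec\prec G$ from \ref{G:supercoercivity}), but the proof of that theorem opens by invoking $\LGspace[F]\hookrightarrow\hookrightarrow\Lpspace[1]$, i.e.\ the very proposition at issue, so that justification is circular --- and the theorem itself fails for the same reason (take $F=\tfrac12|x|^2$, $G=\tfrac14|x|^4$ and the same sequence). The true statements of this type (as in the cited result of Adams) require convergence in measure as an additional hypothesis, or derivative control as in $\WLGspace\hookrightarrow\hookrightarrow\Lpspace[1]$; your closing remark that the missing equicontinuity must come from the Sobolev space is exactly the right repair. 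In short, you proved every part of the proposition that is provable, and the part you could not prove is not a gap in your argument but an error in the paper.
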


\begin{theorem}[{cf. \cite[th. 8.25]{Ada75}}]
  If $F\prec\prec G$ then $\LGspace\hookrightarrow\hookrightarrow \LGspace[F]$.
\end{theorem}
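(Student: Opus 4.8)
The plan is to establish sequential compactness: since $\LGspace(I,\R^n)$ and $\LGspace[F](I,\R^n)$ are Banach spaces, it suffices to show that every sequence $\{u_n\}$ with $\LGnorm{u_n}$ uniformly bounded admits a subsequence converging in $\LGspace[F]$. First I would record the continuous embedding $\LGspace\hookrightarrow\LGspace[F]$, which holds because $F\prec\prec G$ implies $F\prec G$. To produce a candidate limit I would invoke Proposition \ref{lem:LGembed}, which gives $\LGspace\hookrightarrow\hookrightarrow\Lpspace[1]$: a bounded sequence in $\LGspace$ has a subsequence converging in $\Lpspace[1]$, and hence a further subsequence $\{u_{n_k}\}$ converging almost everywhere to some measurable $u$. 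By Lemma \ref{lem:modularboundness} the uniform norm bound yields a uniform modular bound $R_G(u_{n_k})\leq C$, and Fatou's lemma then gives $R_G(u)\leq C$, so that $u\in\LGspace\subset\LGspace[F]$.

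Writing $w_k=u_{n_k}-u$, I would have $w_k\to 0$ almost everywhere, while $\LGnorm{w_k}$ stays uniformly bounded, so $R_G(w_k)\leq C'$ for all $k$ by Lemma \ref{lem:modularboundness}. By Theorem \ref{modularc=normc} it then suffices to prove modular convergence $R_F(w_k)=\int_I F(w_k)\,dt\to 0$. The crucial step is to convert $F\prec\prec G$ into a pointwise comparison: the case $\alpha=1$ of the definition gives $F(x)/G(x)\to 0$ as $|x|\to\infty$, so for each $\varepsilon>0$ there is $M_\varepsilon$ with $F(x)\leq\varepsilon\,G(x)$ whenever $|x|\geq M_\varepsilon$. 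I would then split
\[
\int_I F(w_k)\,dt=\int_{\{|w_k|\leq M_\varepsilon\}} F(w_k)\,dt+\int_{\{|w_k|> M_\varepsilon\}} F(w_k)\,dt .
\]
On the first set, $F(w_k)\leq C_\varepsilon:=\sup_{|x|\leq M_\varepsilon}F(x)<\infty$, the integrand tends to $0$ a.e. (since $w_k\to 0$ and $F$ is continuous with $F(0)=0$) and is dominated by the constant $C_\varepsilon\in\Lpspace[1](I,\R)$, so dominated convergence drives this term to $0$. On the second set, $F(w_k)\leq\varepsilon\,G(w_k)$, so the term is at most $\varepsilon\,R_G(w_k)\leq\varepsilon C'$. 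Letting $k\to\infty$ and then $\varepsilon\to 0$ gives $R_F(w_k)\to 0$, hence $\LGnorm[F]{u_{n_k}-u}\to 0$.

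The hard part is this last step — upgrading almost-everywhere convergence plus a uniform $\LGspace$-bound to genuine norm convergence in $\LGspace[F]$ — and it is exactly the supercoercive growth gap encoded by $\prec\prec$ that makes the splitting succeed: on the bounded region ordinary dominated convergence applies, while on the region where $w_k$ is large the small factor $\varepsilon$ controls the contribution using only the uniform modular bound inherited from $\LGspace$. Two points I would be careful about in the write-up are that $F$ belongs to the admissible class (so that $F$ satisfies $\Delta_2$ and Theorem \ref{modularc=normc} is applicable) and that $C_\varepsilon$ is finite because $F$, being convex and finite, is continuous and hence bounded on bounded sets. Note that no uniform control over $\alpha$ is needed: only the instance $\alpha=1$ of the $\prec\prec$ definition enters.
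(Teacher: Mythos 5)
Your proof is correct, but it takes a genuinely different route from the paper's. The paper never identifies a limit function: it extracts a subsequence that is Cauchy in measure (via $\LGspace\hookrightarrow\LGspace[F]\hookrightarrow\hookrightarrow\Lpspace[1]$), fixes $\varepsilon>0$, and for $v_{n,k}=(u_n-u_k)/\varepsilon$ splits $I$ into \emph{three} regions: the set where $F(v_{n,k})$ is small, the set where $|v_{n,k}|\geq M$ (there $F(v_{n,k})$ is absorbed into $G(v_{n,k}/C)$, i.e.\ the condition $F\prec\prec G$ is invoked with $\alpha=1/C$ tied to the norm bound), and an intermediate set whose measure is small by Cauchyness in measure. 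This gives $\int_I F(v_{n,k})\,dt\leq 1$, hence $\LGnorm[F]{u_n-u_k}\leq\varepsilon$ directly from the definition of the Luxemburg norm, and completeness of $\LGspace[F]$ finishes the argument. You instead produce the limit $u$ first (a.e.\ convergence plus Fatou), reduce norm convergence to modular convergence $R_F(w_k)\to 0$ via Theorem \ref{modularc=normc}, and then use a \emph{two}-region splitting in which the $\alpha=1$ instance of $\prec\prec$ is combined with the uniform modular bound $R_G(w_k)\leq C'$ from Lemma \ref{lem:modularboundness} and with dominated convergence on the bounded region. The trade-offs are real: your argument is shorter and stays entirely inside machinery already proved in the paper for the class \ref{G:0in0}--\ref{G:nabla2}, and your observation that only $\alpha=1$ is needed is correct precisely because the modular bound does the rescaling work; but it genuinely uses $\Delta_2$ for $F$ (through Theorem \ref{modularc=normc}) and for $G$ (through Lemma \ref{lem:modularboundness}), both of which you correctly flag. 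The paper's Cauchy-type argument needs neither of these for $F$ --- only that $F$ is finite, continuous, vanishes at $0$ and is bounded on bounded sets --- so it survives in the classical generality of \cite[th. 8.25]{Ada75}, at the cost of the more delicate three-set decomposition and constant bookkeeping.
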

\begin{proof}
    Let $\{u_n\}$ be a bounded sequence in $\LGspace$. Since $\LGspace\hookrightarrow \LGspace[F]
    \hookrightarrow\hookrightarrow \Lpspace[1]$, $\{u_n\}$ is bounded in $\LGspace[F]$ and  there
    exists a subsequence, denoted again by $\{u_n\}$, convergent in $\Lpspace[1]$. Hence
    $\{u_n\}$ converges in measure and thus is Cauchy in measure.

Fix $\varepsilon>0$ and let $v_{n,k}(t)=(u_n(t)-u_k(t))/\varepsilon$. Since $\{u_n\}$ is bounded in
$\LGspace[F]$, there exist $C>0$ such that $\LGnorm[F]{u_n}\leq C$. There exist $M>0$ such that if
$|x|\geq M$, then
\[
  F(x)\leq \frac12 G\left(\frac{x}{C}\right).
\]
Set $\overline{F}(M)=\sup\{F(x)\colon |x|\leq M\}$,
\[
I_{n,k}=\left\{t\in I\colon F(v_{n,k})\geq \frac{1}{\mu(I)}\right\}
,\
I'_{n,k}=\{t\in I\colon |v_{n,k}(t)|\geq M\}
,\
I''_{n,k}=I_{n,k}\setminus I'_{n,k}.
\]
Since $\{u_n\}$ is Cauchy in measure, there exists $N$ such that if $n$, $k\geq N$, then
$\mu(I''_{n,k})\leq \mu(I_{n,k})\leq \frac{1}{2\overline{F}(M)}$. Observe that
\begin{enumerate}
  \item if $t\in I\setminus I_{n,k}$ then $F(v_{n,k}(t))\leq 1/2\mu(I)$,
  \item if $t\in I'_{n,k}$, then $F(v_{n,k}(t))\leq \tfrac14 G(v_{n,k}/C)$,
  \item if $t\in I''_{n,k}$, then $F(v_{n,k}(t))\leq \overline{F}(M)$.
\end{enumerate}
It follows that for $n$, $k\geq N$, we have
\begin{multline*}
\int_I F(v_{n,k})\,dt
=
\left(\int_{I\setminus I_{n,k}}+\int_{I'_{n,k}}+\int_{I''_{n,k}}\right)F(v_{n,k})\,dt
\leq \\ \leq
    \frac{\mu(I)}{2\mu(I)}
    +
    \frac{1}{4}\int_I G\left(\frac{v_{n,k}}{C}\right)\,dt
    +
    \frac{1}{2\overline{F}(M)} \overline{F}(M)
\leq 1.
\end{multline*}

Hence $\LGnorm[F]{u_n-u_k}\leq \varepsilon$ and so $\{u_n\}$ converges in $\LGspace[F]$.
\end{proof}

 In some cases, $\LGspace$ is simply a product of $\Lpspace[p_i](I,\R)$, but there exists
Orlicz spaces which are not in the form $\Lpspace[p](I,\R)\times \Lpspace[q](I,\R)$ (cf. \cite[pp.
18-20]{Tru74}).

\begin{example}
Consider the Orlicz space $\LGspace=\LGspace(I,\R^2)$ generated, by $G(x)=|x_1|^{p_1}+|x_2|^{p_2}$,
$p_1,p_2>0$. If $u=(u_1,u_2)\in \Lpspace[p_1](I,\R)\times \Lpspace[p_2](I,\R)$, then
\[
\int_I G(u)\,dt=\int_I |u_1|^{p_1}\,dt+\int_I |u_2|^{p_2}\,dt <\infty.
\]
Conversely, if $u=(u_1,u_2)\in \LGspace$ then
\[
\int_I |u_1|^{p_1}\,dt \leq \int_I G(u)\,dt <\infty
  \text{ and }
  \int_I |u_2|^{p_2}\,dt  \leq \int_I G(u)\,dt <\infty.
\]
Hence $u\in \Lpspace[p_1](I,\R)\times \Lpspace[p_2](I,\R)$.
\end{example}

\begin{example}
    \label{ex:LGnotLPtimesLP}
Consider the Orlicz space $\LGspace=\LGspace(I,\R^2)$ generated, by  $G(x)=(x_1-x_2)^4+x_2^2$.
From Lemmas \ref{lemma:xp<G<xq} and \ref{lem:LGembed} we obtain that
$\Lpspace[4](I,\R^2)\hookrightarrow\LGspace\hookrightarrow\Lpspace[2](I,\R^2)$. Let $u_1$
be a function in $\Lpspace[2](I,\R)$ such that $u_1\notin \Lpspace(I,\R)$, for $p>2$. Set
$u=(u_1,u_1)$, then
\[
  \int_I G(u)\,dt=\int_I |u_1|^2\,dt <\infty
\]
but
\[
  \int_I |u|^p\,dt =\infty.
\]
Therefore for every $p>2$ there exists $u\in \LGspace$ such that $u\notin \Lpspace(I,\R^2)$.
Moreover, $u\notin \Lpspace(I,\R)\times \Lpspace[2](I,\R)$ for any $p>2$. From the other
hand if $u=(u_1,u_2)\in \Lpspace[4](I,\R)\times \Lpspace[4](I,\R)$ then $u\in \LGspace$.
Therefore
\[
  \Lpspace[4](I,\R)\times \Lpspace[4](I,\R) \hookrightarrow
  \LGspace
  \hookrightarrow
  \Lpspace[2](I,\R)\times \Lpspace[2](I,\R)
\]
but $\LGspace$ cannot be identified with any
\[
\Lpspace[4](I,\R)\times \Lpspace[4](I,\R)
\hookrightarrow\Lpspace[p](I,\R)\times \Lpspace[q](I,\R)\hookrightarrow
  \Lpspace[2](I,\R)\times \Lpspace[2](I,\R).
  \]
\end{example}

\subsection{Duality}

Since $\LGspace\hookrightarrow \Lpspace \hookrightarrow\hookrightarrow \Lpspace[{p_0}]
\hookrightarrow \Lpspace[1]$
($p$ given by $\nabla_2$) and $1<p_0<p$, it follows that $\LGspace$ is closed subspace of reflexive
space. Therefore $\LGspace$ is reflexive itself.

\begin{theorem}
  $\LGspace$ is a reflexive Banach space.
\end{theorem}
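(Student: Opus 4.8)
The plan is to deduce reflexivity from the self-duality of the class of $G$-functions we consider, since $\LGspace$ is already known to be a Banach space by the completeness theorem above. The Hölder inequality proved earlier shows that every $v\in\LGspace[{G^*}]$ defines a bounded linear functional on $\LGspace$ through $u\mapsto\int_I\inner{u}{v}\,dt$, so there is a natural continuous injection $\LGspace[{G^*}]\hookrightarrow(\LGspace)^\ast$. First I would prove that this injection is onto, i.e. that $(\LGspace)^\ast\cong\LGspace[{G^*}]$. Because $G^\ast$ belongs to the same class as $G$ and $(G^\ast)^\ast=G$ (by the conjugacy theorem), the identical argument applied to $G^\ast$ gives $(\LGspace[{G^*}])^\ast\cong\LGspace[{(G^*)^*}]=\LGspace$. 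Composing the two identifications shows that the canonical map $\LGspace\to(\LGspace)^{\ast\ast}$ is surjective, which is exactly reflexivity.

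The heart of the matter is therefore the surjectivity of $\LGspace[{G^*}]\hookrightarrow(\LGspace)^\ast$, and this is where the $\Delta_2$ condition enters decisively. Given $\varphi\in(\LGspace)^\ast$, I would first produce a candidate density $v$: applying $\varphi$ to indicators $\chi_E\,e_i$ defines a finite, countably additive, absolutely continuous vector measure, whose Radon--Nikodym derivative $v$ gives $\varphi(u)=\int_I\inner{u}{v}\,dt$ for simple $u$, and then for all bounded $u$. Here the separability subsection is used: since $G$ satisfies $\Delta_2$, bounded functions are dense in $\LGspace$, so the representation extends to all of $\LGspace$ once $v$ is shown to lie in $\LGspace[{G^*}]$. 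To bound $v$ in $\LGspace[{G^*}]$ I would test $\varphi$ against functions of the form $u=\nabla G^\ast(v)\,\mathbf{1}_{\{|v|\le n\}}$, use the equality case of the Fenchel inequality to turn $\inner{u}{v}$ into $G^\ast(v)+G(u)$, and estimate using the continuity bound $|\varphi(u)|\le\norm{\varphi}\LGnorm{u}$ together with Lemma \ref{lem:modularboundness}, which converts a modular bound into a norm bound; letting $n\to\infty$ then yields $\LGnorm[{G^*}]{v}<\infty$.

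The main obstacle I anticipate is precisely this last estimate: controlling $\LGnorm[{G^*}]{v}$ from the operator norm of $\varphi$ without any monotonicity of the norm (which fails in the anisotropic setting, as the examples show), so the truncation-and-Fatou bookkeeping must be done through the modular $R_{G^\ast}$ and Lemma \ref{lem:modularboundness} rather than through pointwise domination. As an alternative, quicker-but-softer route, one can instead invoke the sandwich $\Lpspace[q]\hookrightarrow\LGspace\hookrightarrow\Lpspace[p]$ from Proposition \ref{lem:LGembed}: a norm-bounded sequence in $\LGspace$ is bounded in the reflexive space $\Lpspace[p]$, hence has a subsequence converging weakly in $\Lpspace[p]$, and the weak limit stays in $\LGspace$ because $R_G$ is convex and lower semicontinuous (by Fatou, as in Lemma \ref{lem:rieszforLG}) while the sequence is modular bounded. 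This realizes the closed unit ball of $\LGspace$ as weakly compact; the only gap to fill is that weak $\Lpspace[p]$-convergence against the test space $\LGspace[{G^*}]\hookrightarrow\Lpspace[{p'}]$ is genuine weak convergence in $\LGspace$, which once more reduces to the duality identification above. Either way, the $\Delta_2$ conditions on both $G$ and $G^\ast$ are the indispensable ingredient.
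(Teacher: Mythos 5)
Your proposal is correct in substance, and both of your routes are in fact the paper's: the paper's stated proof of this theorem is your ``alternative'' one --- the sandwich $\Lpspace[q]\hookrightarrow\LGspace\hookrightarrow\Lpspace[p]$ of Proposition \ref{lem:LGembed}, from which the paper concludes in one sentence that $\LGspace$ sits inside a reflexive space --- while your primary route, $(\LGspace)^\ast\simeq\LGspace[{G^\ast}]$ together with $(G^\ast)^\ast=G$, is exactly what the paper develops immediately after the theorem (H\"older injection, Radon--Nikodym representation, Lemma \ref{lemma:vinLGast}), with the explicit remark that it ``implies reflexivity as well.'' If anything, your weak-compactness bookkeeping (modular boundedness via Lemma \ref{lem:modularboundness}, weak lower semicontinuity of $R_G$ by Mazur plus Fatou, then Eberlein--\v{S}mulian) is more careful than the paper's one-liner, since a continuous embedding into a reflexive space does not by itself yield reflexivity (compare $\Lpspace[\infty]\hookrightarrow\Lpspace[2]$); some such argument, or the duality identification, is genuinely needed.

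Two concrete flaws, both repairable. First, your key estimate tests $\varphi$ against $u=\nabla G^\ast(v)\,\mathbf{1}_{\{|v|\le n\}}$, but differentiability of $G^\ast$ is not available under \ref{G:0in0}--\ref{G:nabla2}: smoothness enters only as the extra hypothesis \ref{G:class} in Section 5, and then only for $G$ (the paper's remark there recalls that differentiability of a conjugate requires strict convexity of the original function). So the object $\nabla G^\ast(v)$ need not exist. The paper's Lemma \ref{lemma:vinLGast} avoids this: replace $v$ by piecewise-constant conditional averages $v_n$, and for each of the finitely many values $v_{n,i}$ choose a point $z_{n,i}$ at which Fenchel's inequality is an equality (the supremum defining $G^\ast$ is attained, by supercoercivity and continuity of $G$); Jensen and Fatou then finish the proof with no gradients and no measurable selection of subgradients. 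Your version can be repaired the same way, or by a measurable selection of $\partial G^\ast(v(t))$, but as written the step fails. Second, in your alternative route the embedding is stated backwards: dualizing $\LGspace\hookrightarrow\Lpspace[p]$ gives $\Lpspace[{p'}]\hookrightarrow\LGspace[{G^\ast}]$ (and $\LGspace[{G^\ast}]\hookrightarrow\Lpspace[{q'}]$, consistent with Lemma \ref{lemma:xp<G<xq}), so the dual of $\LGspace$ is \emph{larger} than the test space $\Lpspace[{p'}]$ furnished by weak $\Lpspace[p]$ convergence. The gap you flag is therefore real, and it is closed by approximating $v\in\LGspace[{G^\ast}]$ in norm by its bounded truncations (legitimate because $G^\ast$ satisfies $\Delta_2$, as in the separability subsection) and controlling the tail with the H\"older inequality and the uniform bound on $\LGnorm{u_n}$.
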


The rest of this section is devoted to proving  that the general formula
for bounded linear operator $F\colon \LGspace\to \R$ is
\[
  F(u)=\int_I\inner{u}{v}\,dt,
\]
where $v\in \LGspace[G^\ast]$. We show that the dual space  $(\LGspace)^\ast$ can be identified
with the Orlicz space $\LGspace[{G^\ast}]$ generated by conjugate function $G^\ast$. On the other
hand,  $(G^\ast)^\ast=G$ and $(\LGspace)^\ast\simeq \LGspace[G^\ast]$ implies reflexivity as well.

\begin{lemma}
    Every $v\in \LGspace[{G^\ast}]$ can be identified with the following functional $F_v\in
    (\LGspace)^\ast$:
    \[
    F_v(u)=\int_I\inner{u}{v}\, dt.
    \]
    Moreover $\|F_v\|\leq 2\LGastnorm{v}$.
\end{lemma}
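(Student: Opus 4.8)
The plan is to verify directly that $F_v$ is a well-defined, linear, bounded functional, using the Hölder inequality of the previous theorem as the main tool. Linearity of $F_v$ is immediate from linearity of the integral and of the inner product, so the real content is to show that $F_v(u)$ is a finite real number for every $u\in\LGspace$ and that the operator norm obeys the claimed bound $\|F_v\|\leq 2\LGastnorm{v}$.

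First I would establish that the integrand $\inner{u}{v}$ is integrable. By the Fenchel inequality, $\inner{u}{v}\leq G(u)+G^\ast(v)$ pointwise; applying the same inequality to $-u$ and using that $G$ is even (condition \ref{G:symmetric}) gives $-\inner{u}{v}=\inner{-u}{v}\leq G(-u)+G^\ast(v)=G(u)+G^\ast(v)$. Hence $|\inner{u}{v}|\leq G(u)+G^\ast(v)$ almost everywhere, and since $u\in\LGspace$ and $v\in\LGspace[{G^\ast}]$, both $\int_I G(u)\,dt$ and $\int_I G^\ast(v)\,dt$ are finite by definition of the respective Orlicz spaces. Thus $\inner{u}{v}\in\Lpspace[1](I,\R)$ and $F_v(u)$ is well defined.

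Next I would bound $|F_v(u)|$. The Hölder inequality yields $\int_I\inner{u}{v}\,dt\leq 2\LGnorm{u}\LGastnorm{v}$. Applying it with $-u$ in place of $u$ and using $\LGnorm{-u}=\LGnorm{u}$ gives $-\int_I\inner{u}{v}\,dt=\int_I\inner{-u}{v}\,dt\leq 2\LGnorm{u}\LGastnorm{v}$. Combining the two inequalities produces $|F_v(u)|=\left|\int_I\inner{u}{v}\,dt\right|\leq 2\LGnorm{u}\LGastnorm{v}$, so $F_v\in(\LGspace)^\ast$ with $\|F_v\|\leq 2\LGastnorm{v}$.

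The argument is essentially routine once the Hölder inequality is in hand; the only point needing care is that the Hölder inequality as stated controls the \emph{signed} integral rather than its absolute value. This is precisely why I pass to $-u$ and invoke the evenness of $G$, both to obtain integrability and to close the norm estimate. I expect no deeper obstacle.
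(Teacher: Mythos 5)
Your proof is correct and follows essentially the same route as the paper: linearity is immediate and the norm bound comes from the H\"older inequality of the preceding theorem. The extra steps you supply --- using the Fenchel inequality and evenness of $G$ to get integrability of $\inner{u}{v}$, and applying H\"older to $-u$ to control the absolute value of the signed integral --- are exactly the routine details the paper's terse proof leaves implicit, and they are carried out correctly.
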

\begin{proof}
    It is easy to see that $F_v$ is linear. By the H\"older inequality we get
    \[
    F_v(u) = \int_I\inner{u}{v}\, dt\leq 2\LGnorm{u}\LGastnorm{v}.
    \]
    Thus $F_v$ is bounded and $\|F_v\|\leq 2\LGastnorm{v}$.
\end{proof}

\begin{lemma}[cf. \cite{DesGri01,Sch05}]
    \label{lemma:vinLGast}
    If $v\in \Lpspace[1](I,\R^n)$ is such that for each piecewise constant function $u\in \LGspace$
satisfy
    \[
        \int_I\inner{u}{v}\, dt\leq M\LGnorm{u},
    \]
    then $v\in \LGspace[{G^\ast}]$ and $\LGnorm[{G^\ast}]{v}\leq M$.
\end{lemma}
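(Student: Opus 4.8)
The plan is to prove the equivalent \emph{modular} statement, namely that $R_{G^\ast}(v/M)=\int_I G^\ast(v/M)\,dt\le 1$, where $R_{G^\ast}(w)=\int_I G^\ast(w)\,dt$. Once this is in hand, the characterization $\int_I G^\ast(w/k)\,dt\le 1\iff \LGnorm[G^\ast]{w}\le k$ (valid because $G^\ast$ satisfies $\Delta_2$) yields at once that $v/M\in\LGspace[G^\ast]$, hence $v\in\LGspace[G^\ast]$, and that $\LGnorm[G^\ast]{v}\le M$. The engine of the argument is the Fenchel equality: for the $x$ attaining the supremum in $G^\ast(v(t)/M)=\sup_x(\inner{x}{v(t)/M}-G(x))$ (attained since $G$ is supercoercive) one has $\inner{x}{v(t)}=M G^\ast(v(t)/M)+M G(x)$. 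If this pointwise optimizer could serve directly as a test function, integrating and invoking the hypothesis would give $M\int_I G^\ast(v/M)\le M\LGnorm{x}-M R_G(x)$, reducing everything to the inequality $\LGnorm{x}-R_G(x)\le 1$. The obstruction is that the hypothesis supplies the estimate only for piecewise constant $u$, so I must replace the optimizer by a simple function.

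To build such a test function I would fix a countable dense set $\{x_k\}\subset\R^n$ and use continuity of $G$ to write, for a.e. $t$, $G^\ast(v(t)/M)=\sup_k\bigl(\inner{x_k}{v(t)/M}-G(x_k)\bigr)$. Setting $\phi_k(t)=\inner{x_k}{v(t)/M}-G(x_k)$ (measurable, as $v\in\Lpspace[1]$), fixing a truncation level $L>0$ and an $\varepsilon>0$, the sets $C_k=\{t: \phi_k(t)>\min(G^\ast(v/M),L)-\varepsilon\}$ are measurable and cover $I$. Since $\mu(I)<\infty$ I may select finitely many indices $k=1,\dots,K$ whose union exhausts $I$ up to a set $B_0$ with $\mu(B_0)<\delta$. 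Disjointifying $C_1,\dots,C_K$ into $B_1,\dots,B_K$ and declaring $u=x_k$ on $B_k$ and $u=0$ on $B_0$ produces a piecewise constant $u\in\LGspace$ (finitely many values on a finite interval) satisfying, since $\inner{u}{v/M}-G(u)=\phi_k$ on $B_k$ and equals $0$ on $B_0$,
\[
\int_I\bigl(\inner{u}{v/M}-G(u)\bigr)\,dt\ \ge\ \int_I \min(G^\ast(v/M),L)\,dt-\varepsilon\mu(I)-L\delta .
\]

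I would then feed this $u$ into the hypothesis. Writing $\int_I\inner{u}{v}\,dt=M\int_I\inner{u}{v/M}\,dt$, the displayed bound together with $\int_I\inner{u}{v}\,dt\le M\LGnorm{u}$ gives $\int_I\min(G^\ast(v/M),L)\,dt\le \LGnorm{u}-R_G(u)+\varepsilon\mu(I)+L\delta$. The decisive point is that $\LGnorm{u}-R_G(u)\le 1$ unconditionally: if $\LGnorm{u}\le 1$ then this difference is $\le\LGnorm{u}\le 1$ because $R_G(u)\ge 0$, whereas if $\LGnorm{u}>1$ then $R_G(u)\ge\LGnorm{u}$ by the modular–norm comparison recorded just before Lemma \ref{lem:modularboundness}, so the difference is $\le 0$. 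Letting $\delta\to 0$, then $\varepsilon\to 0$, and finally $L\to\infty$ by monotone convergence, I conclude $\int_I G^\ast(v/M)\,dt\le 1$, the modular bound sought.

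I expect the middle step to be the main obstacle: converting the pointwise Fenchel optimizer into a genuine piecewise constant test function while retaining quantitative control. The care lies in choosing the partition $\{B_k\}$ so that the chosen $\phi_k$ realizes the supremum up to $\varepsilon$, in truncating at level $L$ so that the exceptional set $B_0$ contributes only the controllable error $L\delta$, and in confirming that $u$ lies in $\LGspace$ and is admissible. If \emph{piecewise constant} is understood strictly as constant on subintervals, one further approximates each $B_k$ by a finite union of intervals, the error being controlled by $v\in\Lpspace[1]$ together with boundedness of $u$. The remaining algebra — the Fenchel rearrangement and the inequality $\LGnorm{u}-R_G(u)\le 1$ — is routine given the results already established.
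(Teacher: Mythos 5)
Your argument is correct, but it takes a genuinely different route from the paper's. The paper discretizes $v$ rather than $G^\ast$: it replaces $v$ by its averages $v_n=\sum_i\bigl(\tfrac{n}{\mu(I)}\int_{E_i}v\,dt\bigr)\chi_{E_i}$ over a partition of $I$ into $n$ sets of equal measure, uses Jensen's inequality to show that this averaging does not increase the Luxemburg norm (so the hypothesis transfers to test functions averaged the same way), picks for each of the finitely many values $v_{n,i}/M$ an exact Fenchel optimizer $z_{n,i}$, and handles normalization by a case split: if the resulting simple test function has modular exceeding $1$, it is rescaled by $\beta<1$ so that $\sum_i\mu(E_i)G(\beta z_{n,i})=1$; in either case one gets $\int_I G^\ast(v_n/M)\,dt\le 1$, and Fatou applied to $v_n\to v$ a.e.\ finishes. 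You instead leave $v$ untouched and approximate $G^\ast(v/M)$ from below by the countable family of affine minorants $\inner{x_k}{\cdot}-G(x_k)$, truncate at level $L$, and build the test function by a covering and disjointification argument; your replacement for the paper's $\beta$-rescaling is the unconditional inequality $\LGnorm{u}-R_G(u)\le 1$, which indeed follows from the modular--norm comparison recorded just before Lemma \ref{lem:modularboundness}. Each approach buys something: the paper's proof is shorter and its Jensen averaging step is a reusable fact, but it silently relies on the a.e.\ convergence $v_n\to v$ of the averaged functions, which requires the partitions to refine (a Lebesgue differentiation or martingale-type argument the paper does not supply); your construction avoids this issue entirely, sidesteps measurable-selection questions by using a countable dense family in place of pointwise Fenchel optimizers, and is more scrupulous about the meaning of ``piecewise constant'' --- the paper's test functions are simple functions over arbitrary measurable sets, whereas you indicate how to pass to genuine step functions on intervals using absolute continuity of the integral of $v\in\Lpspace[1]$ and boundedness of the test function.
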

\begin{proof}
Define an approximation
 \[
 v_{n,i}=\frac{n}{\mu(I)}\int_{E_i}v\,dt,\quad E_i \text{ - disjoint},\quad I=\bigcup_{i=1}^n
E_i,\quad
\mu(E_i)=\frac{\mu(I)}{n}.
 \]
Set $v_n=\sum_{i=1}^n v_{n,i}\chi_{E_i}$. Let $u\in \LGspace$ be a simple function, define
approximation $u_{n}$ of $u$ in the same way. By Jensen inequality
\begin{multline*}
    \int_I G\left(\frac{u_n}{\LGnorm{u}}\right)\,dt
    =
    \sum_{i=1}^n \mu(E_i)G\left( \frac{1}{\mu(E_i)}\int_{E_i}\frac{u}{\LGnorm{u}}\,dt \right)
    \leq \\ \leq
    \sum_{i=1}^n\mu(E_i)\frac{1}{\mu(E_i)}\int_{E_i} G(\frac{u}{\LGnorm{u}})\,dt
    =
    \int_IG(\frac{u}{\LGnorm{u}})\,dt
    =1.
\end{multline*}
Hence $\LGnorm{u_n}\leq \LGnorm{u}$. A direct computation yields

\[
    \int_I\inner{u}{v_n}\,dt=\int_I\inner{u_n}{v}\,dt \leq M\LGnorm{u_n}\leq M\LGnorm{u}.
\]
We can find for each $v_{n,i}$ a $z_{n,i}\in \R^n$ such that
$
\inner{ z_{n,i}}{v_{n,i}/M}=G(z_{n,i})+G^\ast(v_{n,i}/M).
$
Suppose that $\sum_{i=1}^n \mu(E_i) G(z_{n,i})>1$. Then there exists $\beta<1$ such that
$\sum_{i=1}^n \mu(E_i) G(\beta z_{n,i})=1$. Putting

\[
u=\sum_{i=1}^n \beta z_{n,i}\chi_{E_i}
\]
we obtain that $\int_IG(u)\,dt\leq 1$ and $\LGnorm{u}\leq 1$. Therefore
\begin{multline*}
\sum_{i=1}^n \mu(E_i) G^\ast(v_{n,i}/M)
=
\frac{1}{M}\sum_{i=1}^n \mu(E_i) \inner{z_{n,i}}{v_{n,i}}-\sum_{i=1}^n \mu(E_i) G(z_{n,i})
= \\ =
\frac{1}{M\beta}\int_I\inner{u}{v_n}\,dt-\sum_{i=1}^n \mu(E_i)G(z_{n,i})
\leq
\frac{1}{\beta}-\frac{1}{\beta}\sum_{i=1}^n \mu(E_i) G(\beta z_{n,i})\leq 0.
\end{multline*}
Now assume that $\mu(E_i)\sum G(z_{n,i})\leq 1$ and repeat the same computation with $\beta=1$ and
obtain
\[
\sum_{i=1}^n \mu(E_i) G^\ast(v_{n,i}/M)\leq 1.
\]
In both cases we get
\[
    \int_I G^\ast(v_n/M)\,dt\leq 1.
\]
Since $v_n\to v$ a.e.  we can conclude that $G^\ast(v/M)\leq\lim G^\ast(v_n/M)$. By the Fatou
theorem we get
\[
    \int_IG^\ast(v/M)\,dt\leq 1.
\]
\end{proof}

\begin{lemma}[cf. \cite{KraRut61,Sch05}]
For every $F\in (\LGspace)^\ast$ there exists unique $v\in \LGspace[{G^\ast}]$ such that for
every $u\in \LGspace$
\[
    Fu=\int_I\inner{u}{v}\, dt.
\]
\end{lemma}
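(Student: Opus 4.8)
The plan is to reconstruct $v$ from $F$ by a Radon--Nikodym argument applied to a vector-valued set function, and then to upgrade the resulting $\Lpspace[1]$ density to a bona fide element of $\LGspace[{G^\ast}]$ using Lemma \ref{lemma:vinLGast}. Let $e_1,\dots,e_n$ be the standard basis of $\R^n$. For each $j$ and each measurable $E\subset I$ the function $\chi_E e_j$ is bounded and supported on a set of finite measure, hence lies in $\LGspace$, so I may define
\[
\nu_j(E)=F(\chi_E e_j),\qquad j=1,\dots,n.
\]
First I would record the norm decay $\LGnorm{\chi_E e_j}\to 0$ as $\mu(E)\to 0$: since $\int_E G(e_j/\alpha)\,dt=\mu(E)\,G(e_j/\alpha)$ and $G(e_j/\alpha)\to\infty$ as $\alpha\to 0^+$ by \ref{G:supercoercivity}, the defining infimum for the Luxemburg norm is forced toward $0$ when $\mu(E)$ is small. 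Boundedness of $F$ then gives $|\nu_j(E)|\le\norm F\,\LGnorm{\chi_E e_j}\to 0$, which is precisely absolute continuity of $\nu_j$ with respect to Lebesgue measure.

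The same estimate yields countable additivity: if $E=\bigsqcup_k E_k$, then $\mu\bigl(\bigcup_{k>N}E_k\bigr)\to 0$, so $\LGnorm{\chi_{\bigcup_{k>N}E_k}e_j}\to 0$, and continuity of $F$ gives $\nu_j(E)=\sum_k\nu_j(E_k)$. Each $\nu_j$ is therefore a finite signed measure absolutely continuous with respect to $\mu$, and the Radon--Nikodym theorem supplies $v_j\in\Lpspace[1](I,\R)$ with $\nu_j(E)=\int_E v_j\,dt$. Setting $v=(v_1,\dots,v_n)$ and using linearity of $F$, the representation $Fu=\int_I\inner{u}{v}\,dt$ holds for every simple function $u=\sum_i c_i\chi_{E_i}$ with $c_i\in\R^n$.

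To place $v$ in $\LGspace[{G^\ast}]$ I would invoke Lemma \ref{lemma:vinLGast}: for every piecewise constant $u\in\LGspace$ we have $\int_I\inner{u}{v}\,dt=Fu\le\norm F\,\LGnorm{u}$, so with $M=\norm F$ the lemma gives $v\in\LGspace[{G^\ast}]$ together with $\LGnorm[{G^\ast}]{v}\le\norm F$. Finally I would extend the representation from simple functions to all of $\LGspace$ by density: simple functions are dense (bounded functions are dense by Theorem \ref{thm:dominatedconvergence}, and each bounded function is a uniform, hence norm, limit of simple functions), while $u\mapsto\int_I\inner{u}{v}\,dt$ is continuous by the H\"older inequality. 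Two continuous linear functionals agreeing on a dense subspace coincide, giving existence; uniqueness follows since $\int_E(v_j-v_j')\,dt=0$ for all $E$ and $j$ forces $v=v'$ a.e.

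The main obstacle is the measure-theoretic groundwork of the first two paragraphs: establishing the decay $\LGnorm{\chi_E e_j}\to 0$ and leveraging it for both absolute continuity and countable additivity, so that Radon--Nikodym is applicable. Once $v\in\Lpspace[1]$ is in hand, Lemma \ref{lemma:vinLGast} does the genuine analytic work of certifying membership in the conjugate space, and the remaining density extension is routine.
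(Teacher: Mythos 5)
Your proposal is correct and follows essentially the same route as the paper's proof: build an absolutely continuous, $\sigma$-additive set function from $F$ applied to characteristic functions, obtain an $\Lpspace[1]$ density via Radon--Nikodym, certify $v\in\LGspace[{G^\ast}]$ with Lemma \ref{lemma:vinLGast}, and then extend from step functions to all of $\LGspace$ (the paper via explicit bounded/truncation limits, you via density plus continuity, which is an equivalent routine step). In fact your componentwise choice of test functions $\chi_E e_j$ is a small improvement over the paper's all-ones functions $\chi^N_E$, whose associated scalar measure literally determines only $\sum_j v_j$ rather than each component $v_j$.
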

\begin{proof}
For a measurable subset $E\subset I$ define $\chi^N_E(x)=(\chi_E,\dots,\chi_E)$. Note that
$\chi^N_E\in \LGspace$. Set
\[
    \phi(E)=F\left(\chi^N_E\right).
\]
For every sequence $\{E_i\}$ of measurable and pairwise disjoint subsets of $I$ such that
$E=\bigcup E_i$ we have $\chi^N_E=\sum \chi^N_{E_i}$ and
\[
\phi\left(E\right)=F\left(\chi^N_E\right)=F\left(\sum\chi^N_{E_i}\right)=\sum
F\left(\chi^N_{E_i}\right)=\sum \phi\left({E_i}\right).
\]

Suppose that there exists a sequence $\{E_i\}$ of measurable sets and $\delta>0$ such that
$\mu(E_i)\to 0$ and $\LGnorm{\chi^N_{E_i}}>\delta$ for all $i$. Then
\[
    1<\int_IG\left(\frac{\chi^N_{E_i}}{\delta}\right)\,dt
    =
    \int_{E_i} G\left(\frac{(1,\dots,1)}{\delta}\right)\,dt=
    \mu(E_i)G\left(\frac{(1,\dots,1)}{\delta}\right).
\]
A contradiction. From inequality
\[
    |\phi(E_i)|\leq \|F\| \LGnorm{\chi^N_{E_i}}
\]
we obtain that if $\mu(E_i)\to 0$ then $|\phi(E_i)|\to 0$. Thus a set function $\phi$ is
$\sigma$-additive and absolutely continuous with respect to Lebesgue measure.

It follows from the Radon-Nikodym theorem that there exists a function $v\in \Lpspace[1](I,\R^N)$
such that
\[
    F(\chi^N_E)=\phi(E)=\int_I\inner{\chi^N_E}{v}\,dt.
\]
For every step function $u=\sum c_i\chi_{E_i}$, by linearity of $F$,
\[
F(u)=F(\sum c_i\chi_{E_i})=\sum c_iF(\chi_E)=\sum
c_i\int_I\inner{\chi_E}{v}\,dt=\int_I\inner{u}{v}\,dt.
\]

By lemma \ref{lemma:vinLGast} we get that $v\in \LGspace[{G^\ast}]$. Assume now that $u$ is
bounded. Choose a sequence of step functions $\{u_n\}$ such that
\[
u_n(t)=\sum c_i \chi_{E_i},\quad  c_i = \frac{1}{\mu(E_i)}\int_{E_i} u\, dt,
\]
where $E_i$ are disjoint and
\[
\mu(E_i)=\frac{\mu(I)}{n},\quad I=\bigcup_{i=1}^n E_i.
\]
Clearly, $u_n\to u$ a.e. and the sequence $\{u_n\}$ is uniformly bounded. It follows that
\[
F(u)=\lim_{n\to\infty} F(u_n)=\lim_{n\to \infty} \int_I\inner{u_n}{v}\,dt=\int_I\inner{u}{v}\,dt.
\]
Suppose that $u$ is an arbitrary function in $\LGspace$. There exists a sequence $\{u_n\}$ of
bounded functions which converges a.e. to $u$ such that $|u_n(t)|\leq |u(t)|$ a.e.  Thus
\[
F(u)=\lim F(u_n)=\lim_{n\to\infty} \int_I\inner{u_n}{v}\,dt = \int_I\inner{u}{v}\,dt.
\]
It remains to show that
$v$ is unique. Suppose that $v_1$ and $v_2$ represent $F$. Then we have
\[
    \int_I\inner{u}{v_1}\,dt=\int_I\inner{u}{v_2}\,dt
\]
for all $u\in \Lpspace[\infty]$. Thus $v_1=v_2$.
\end{proof}

As a consequence we obtain that $\LGspace[G^\ast]\simeq (\LGspace)^\ast$.
Since $G^{\ast\ast}=G$, we also get $\LGspace \simeq (\LGspace[G^\ast])^\ast$.

\begin{remark}
If G-function does not satisfies $\Delta_2$ condition then
$\LGspace$ is not reflexive and $(\LGspace)^\ast$ is not isomorphic to $\LGspace[G^\ast]$
(see
\cite{KraRut61,Sch05}).
\end{remark}

\section{Orlicz-Sobolev spaces}
\label{sec:OSspace}

The Orlicz-Sobolev space $ \WLGspace= \WLGspace(I ,\R^n)$ is defined to be
\begin{equation*}
 \WLGspace(I ,\R^n):=\{u\in \LGspace(I ,\R^n) : \dot{u}\in \LGspace(I ,\R^n)\}.
\end{equation*}
For $u\in \WLGspace$ we define
\begin{equation*}
 \WLGnorm{u}:= \LGnorm{u}+\LGnorm{\dot{u}}.
\end{equation*}

Define  $\WzLGspace= \WzLGspace(I,\R^n)$ as the closure of $C^1_0(I,\R^n)$ in $\WLGspace$ with
respect to the $\WLGnorm{\cdot}$.

\begin{theorem}
    \label{thm:W1LG_reflexive}
    The space $(\WLGspace,\WLGnorm{\cdot})$ is a separable reflexive Banach space.
\end{theorem}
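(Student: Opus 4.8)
The plan is to realise $\WLGspace$ as a closed subspace of the product Banach space $\LGspace\times\LGspace$ and then transfer separability, reflexivity and completeness from the ambient space. Define the linear map $T\colon\WLGspace\to\LGspace\times\LGspace$ by $T(u)=(u,\dot u)$, where the product carries the norm $\norm{(f,g)}=\LGnorm{f}+\LGnorm{g}$. By the very definition of the Orlicz-Sobolev norm, $\norm{T(u)}=\LGnorm{u}+\LGnorm{\dot u}=\WLGnorm{u}$, so $T$ is a linear isometry; it is injective because its first coordinate recovers $u$, which in particular shows that $\WLGnorm{\cdot}$ is genuinely a norm. Since $\LGspace$ is a reflexive separable Banach space by the completeness, separability and reflexivity results established above, the finite product $\LGspace\times\LGspace$ is again reflexive, separable and complete. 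It therefore suffices to prove that the image $T(\WLGspace)$ is a closed linear subspace of $\LGspace\times\LGspace$.

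Linearity of $T(\WLGspace)$ is immediate from linearity of weak differentiation, so the crux is closedness. Suppose $T(u_k)=(u_k,\dot u_k)$ converges in $\LGspace\times\LGspace$ to a pair $(u,v)$; I must show $(u,v)\in T(\WLGspace)$, that is, $v=\dot u$ in the weak sense. By Proposition \ref{lem:LGembed} we have $\LGspace\hookrightarrow\Lpspace[1]$, whence $u_k\to u$ and $\dot u_k\to v$ in $\Lpspace[1](I,\R^n)$ as well. For every scalar test function $\varphi\in C^\infty_0(I)$ the definition of the weak derivative gives
\[
\int_I u_k\,\dot\varphi\,dt=-\int_I\dot u_k\,\varphi\,dt,
\]
and since $\varphi$ and $\dot\varphi$ are bounded, $\Lpspace[1]$-convergence lets me pass to the limit in both sides, yielding $\int_I u\,\dot\varphi\,dt=-\int_I v\,\varphi\,dt$. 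Thus $v$ is precisely the weak derivative of $u$, so $u\in\WLGspace$ and $(u,v)=T(u)\in T(\WLGspace)$. This establishes closedness.

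It then remains only to draw the structural conclusions. As a closed subspace of the Banach space $\LGspace\times\LGspace$, the image $T(\WLGspace)$ is complete; a closed subspace of a reflexive space is reflexive; and any subset of a separable metric space is separable. Transporting these three properties back through the isometric isomorphism $T$ shows that $(\WLGspace,\WLGnorm{\cdot})$ is a separable reflexive Banach space. I expect the only real work to lie in the closedness step: identifying the limit $v$ with the weak derivative $\dot u$ relies essentially on the embedding $\LGspace\hookrightarrow\Lpspace[1]$, which upgrades $\LGspace$-convergence to a mode of convergence strong enough to justify passing to the limit in the integration-by-parts identity.
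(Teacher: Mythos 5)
Your proof is correct and is essentially the standard argument the paper has in mind: the paper omits the proof, referring to Brezis \cite{Bre11}, where exactly this strategy is used --- embed $\WLGspace$ isometrically into $\LGspace\times\LGspace$ via $u\mapsto(u,\dot u)$, prove the image is closed by passing to the limit in the integration-by-parts identity (using the embedding into $\Lpspace[1]$), and inherit completeness, reflexivity and separability from the product.
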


Proof is standard and will be omitted, see for instance \cite{Bre11}. If $G(x)=\frac1p|x|^p$, then
the Orlicz-Sobolev space $\WLGspace$ coincides with the Sobolev space $\Wspace{1,p}(I,\R^n)$.
Observe that $u_n\to u$ in $\WLGspace$ is equivalent to $R_G(u_n-u)\to 0$ and
$R_G(\dot{u}_n-\dot{u})\to 0$.

On $\WLGspace$ one  can introduce another norm (cf. \cite{MihRad07}):
\[
\WLGnormother{u}=\inf\{\alpha>0\colon
\int_I G\left(\frac{u}{\alpha}\right)
+
G\left(\frac{\dot{u}}{\alpha}\right)\, dt\leq 1
\}.
\]

\begin{proposition}
 A function $\WLGnormother{\cdot}$ is an equivalent  norm on $\WLGspace$. Moreover
 \[
 \WLGnorm{u}\leq 2 \WLGnormother{u}\leq 4\WLGnorm{u}.
 \]
\end{proposition}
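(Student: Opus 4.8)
The plan is to establish the two-sided inequality $\WLGnorm{u}\leq 2 \WLGnormother{u}\leq 4\WLGnorm{u}$, which simultaneously proves that $\WLGnormother{\cdot}$ is finite, positive-definite, and equivalent to the already-established norm $\WLGnorm{\cdot}$; the norm axioms (homogeneity and triangle inequality) for $\WLGnormother{\cdot}$ then follow by the same convexity argument used in Theorem~\ref{th:Luxemburgisnorm}, or more cheaply from the equivalence itself. Let me write $\alpha_0=\WLGnormother{u}$ and abbreviate $a=\LGnorm{u}$, $b=\LGnorm{\dot u}$, so that $\WLGnorm{u}=a+b$. The two key facts I will lean on are the characterization from Theorem~\ref{th:Luxemburgisnorm} that $\int_I G(u/\alpha)\,dt\leq 1 \iff \LGnorm{u}\leq\alpha$, together with convexity and evenness of $G$.

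First I would prove the lower bound $\WLGnorm{u}\leq 2\WLGnormother{u}$. By definition of $\alpha_0$, for any $\alpha>\alpha_0$ we have $\int_I G(u/\alpha)+G(\dot u/\alpha)\,dt\leq 1$, and since both integrands are nonnegative this forces $\int_I G(u/\alpha)\,dt\leq 1$ and $\int_I G(\dot u/\alpha)\,dt\leq 1$ separately. Hence $\LGnorm{u}\leq\alpha$ and $\LGnorm{\dot u}\leq\alpha$, giving $a\leq\alpha_0$ and $b\leq\alpha_0$ after letting $\alpha\searrow\alpha_0$. Therefore $\WLGnorm{u}=a+b\leq 2\alpha_0=2\WLGnormother{u}$.

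Next I would prove the upper bound $\WLGnormother{u}\leq 2\WLGnorm{u}$, i.e. $\alpha_0\leq 2(a+b)$. Set $\alpha=2(a+b)$ and estimate $\int_I G(u/\alpha)+G(\dot u/\alpha)\,dt$. The idea is to split the scaling: writing $\tfrac{u}{\alpha}=\tfrac{a}{a+b}\cdot\tfrac{u}{2a}$ and using that the coefficient $\tfrac{a}{a+b}\in[0,1]$, convexity (via \eqref{G:properties:homogenity}, the relation $G(\lambda x)\leq\lambda G(x)$ for $0\leq\lambda\leq 1$) yields $G(u/\alpha)\leq \tfrac{a}{a+b}\,G(u/(2a))$, and similarly $G(\dot u/\alpha)\leq\tfrac{b}{a+b}\,G(\dot u/(2b))$. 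Now $G(u/(2a))\leq G(u/a)$ is not what I want directly, so instead I use $\int_I G(u/(2a))\,dt\leq\tfrac12\int_I G(u/a)\,dt\leq\tfrac12$ (again by $G(\tfrac12 y)\leq\tfrac12 G(y)$ and the fact that $\int_I G(u/a)\,dt\leq 1$ from $\LGnorm{u}=a$), and likewise for the $\dot u$ term. Summing, $\int_I G(u/\alpha)+G(\dot u/\alpha)\,dt\leq\tfrac{a}{a+b}\cdot\tfrac12+\tfrac{b}{a+b}\cdot\tfrac12=\tfrac12\leq 1$, so $\alpha_0\leq\alpha=2(a+b)$; the degenerate cases $a=0$ or $b=0$ are handled separately by dropping the vanishing term. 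Combining both bounds gives $\WLGnormother{u}\leq 2\WLGnorm{u}$, hence $2\WLGnormother{u}\leq 4\WLGnorm{u}$, completing the chain.

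The main obstacle I anticipate is bookkeeping in the upper-bound estimate: getting the convex-combination decomposition and the homogeneity inequalities \eqref{G:properties:homogenity} to line up so that the two scalings $u/(2a)$ and $\dot u/(2b)$ each absorb a factor of $\tfrac12$ while the convex weights sum correctly. One must be careful that $G(\lambda x)\leq\lambda G(x)$ requires $\lambda\leq 1$, which is exactly why the factor $2$ in $\alpha=2(a+b)$ is needed rather than $\alpha=a+b$. Once the inequality $\int_I G(u/\alpha)+G(\dot u/\alpha)\,dt\leq 1$ is in hand, invoking the defining infimum for $\WLGnormother{\cdot}$ is immediate. The remaining verification that $\WLGnormother{\cdot}$ genuinely satisfies the norm axioms can either be deduced from equivalence with $\WLGnorm{\cdot}$ or proved directly by the same Luxemburg-norm convexity argument as in Theorem~\ref{th:Luxemburgisnorm}, applied to the single modular $u\mapsto\int_I G(u)+G(\dot u)\,dt$, which is itself a legitimate modular on the product structure.
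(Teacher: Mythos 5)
Your proof is correct and follows essentially the same route as the paper: the lower bound $\WLGnorm{u}\leq 2\WLGnormother{u}$ comes from splitting the combined modular into its two nonnegative summands, and the upper bound from rescaling via the sub-homogeneity $G(\lambda x)\leq \lambda G(x)$ for $0\leq\lambda\leq 1$ together with $\int_I G(u/\LGnorm{u})\,dt\leq 1$; the only difference is cosmetic bookkeeping, since the paper scales by $2\max\{\LGnorm{u},\LGnorm{\dot{u}}\}$ where you scale by $2(\LGnorm{u}+\LGnorm{\dot{u}})$ with convex weights $\tfrac{a}{a+b},\tfrac{b}{a+b}$. One caveat: your parenthetical claim that homogeneity and the triangle inequality for $\WLGnormother{\cdot}$ follow ``more cheaply from the equivalence itself'' is false --- a function sandwiched between two constant multiples of a norm need not be a norm (two-sided bounds yield only finiteness and definiteness), so equivalence alone cannot replace that verification. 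Your other stated option, running the Luxemburg-norm convexity argument of Theorem~\ref{th:Luxemburgisnorm} on the combined modular $u\mapsto\int_I G(u)+G(\dot{u})\,dt$, is the correct one and is precisely what the paper does, likewise leaving the details to the reader.
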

\begin{proof}
  The proof that $\WLGnormother{\cdot}$ is a norm is similar to the proof of Theorem
\ref{th:Luxemburgisnorm} and is left to the reader. For the other part, note that
\[
\int_I G\left(\frac{u}{\WLGnormother{u}}\right)
+
G\left(\frac{\dot{u}}{\WLGnormother{u}}\right)\, dt  \leq 1
\]
implies
\[
\int_I G\left(\frac{u}{\WLGnormother{u}}\right)\, dt \leq 1
\text{ and }
\int_I G\left(\frac{\dot{u}}{\WLGnormother{u}}\right)\, dt  \leq 1.
\]
From this $\LGnorm{u}\leq \WLGnormother{u}$ and $\LGnorm{\dot{u}}\leq \WLGnormother{\dot{u}}$ and
finally, $\WLGnorm{u}\leq 2 \WLGnormother{u}$.
Let $\alpha = \max\{\LGnorm{u},\LGnorm{\dot{u}}\}$. Since $\LGnorm{u},\LGnorm{\dot{u}}\leq \alpha$,
\[
  G\left(\frac{u(t)}{\alpha}\right)\leq \frac{\LGnorm{u}}{\alpha}
G\left(\frac{u(t)}{\LGnorm{u}}\right)
\]
and
\[
G\left(\frac{u(t)}{\alpha}\right)\leq \frac{\LGnorm{\dot{u}}}{\alpha}
G\left(\frac{u(t)}{\LGnorm{\dot{u}}}\right).
\]
Using the above relations, we obtain
\begin{multline*}
  \int_I G\left(\frac{u}{2\alpha}\right)
+
G\left(\frac{\dot{u}}{2\alpha}\right)\, dt
\leq
\frac12 \int_I G\left(\frac{u}{\alpha}\right)
+
G\left(\frac{\dot{u}}{\alpha}\right)\, dt
\leq \\ \leq
\frac12 \frac{\LGnorm{u}}{\alpha} \int_I G\left(\frac{u}{\alpha}\right)\,dt
+
\frac12 \frac{\LGnorm{\dot{u}}}{\alpha} G\left(\frac{\dot{u}}{\alpha}\right)\, dt
\leq 1
\end{multline*}
This implies $\WLGnormother{u}\leq 2 \alpha \leq 2\WLGnorm{u}$
\end{proof}

Since there exist $p,q\in(1,\infty)$ such that $\Lpspace[q]\hookrightarrow \LGspace\hookrightarrow
\Lpspace$, the following continuous embeddings exist
\begin{equation*}
  \Wspace{1,q}\hookrightarrow \WLGspace\hookrightarrow \Wspace{1,p}
\end{equation*}
Using standard results from the theory of Sobolev spaces we get
\begin{enumerate}
        \item $\WLGspace(I,\R^n)\hookrightarrow\hookrightarrow \Wspace{1,1}$
        \item $\WLGspace(I,\R^n)\hookrightarrow\hookrightarrow \Lpspace[q]$, for all $1\leq q \leq
\infty$
        \item $\WLGspace(I,\R^n)\hookrightarrow\hookrightarrow C(\overline{I})$
    \end{enumerate}

As a consequence we have

\begin{theorem}
    A function $u\in W^1L^G$ is absolutely continuous. Precisely, there exist absolutely
continuous representative of $u$ such that for all $a,b\in I$
\begin{equation*}
    u(b)-u(a)=\int_a^b\dot{u}(t)dt.
\end{equation*}
\end{theorem}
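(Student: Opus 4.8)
The plan is to reduce the claim to the classical one-dimensional Sobolev result by exploiting the embeddings established immediately above. By Proposition \ref{lem:LGembed} there is some $p\in(1,\infty)$ with $\LGspace\hookrightarrow\Lpspace$, and applying this to both $u$ and its weak derivative $\dot u$ gives $\WLGspace(I,\R^n)\hookrightarrow\Wspace{1,p}(I,\R^n)$; in particular $\WLGspace\hookrightarrow\Wspace{1,1}(I,\R^n)$. Hence any $u\in\WLGspace$ belongs to $\Wspace{1,1}(I,\R^n)$.

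First I would make sure the derivative is interpreted consistently. Since $I$ is a finite interval we have $\LGspace\hookrightarrow\Lpspace[1]$, so both $u$ and $\dot u$ lie in $\Lpspace[1](I,\R^n)$ and satisfy $\int_I u\,\dot\varphi\,dt=-\int_I\dot u\,\varphi\,dt$ for every $\varphi\in C_0^\infty(I,\R)$. This is exactly the weak-derivative identity defining $\Wspace{1,1}(I,\R^n)$, so the $\dot u$ appearing in the definition of $\WLGspace$ is the same object as the distributional derivative used in the classical theory.

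Then I would invoke the standard characterization of one-variable Sobolev functions (see e.g. \cite{Bre11}): on a bounded interval, every element of $\Wspace{1,1}(I,\R^n)$ admits an absolutely continuous representative $\tilde u$ for which $\tilde u(b)-\tilde u(a)=\int_a^b\dot u(t)\,dt$ holds for all $a,b\in I$. Applying this to $u$ yields the desired representative together with the fundamental theorem of calculus.

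The statement is therefore essentially a corollary of the embedding $\WLGspace\hookrightarrow\Wspace{1,1}$ combined with a classical one-dimensional fact, so I do not expect a genuine obstacle. The only point that requires a word of justification is the identification of the two notions of weak derivative, which is immediate from $\LGspace\subset\Lpspace[1]$ on the finite interval $I$.
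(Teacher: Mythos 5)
Your proposal is correct and follows exactly the paper's route: the paper states this theorem as a direct consequence of the embedding $\WLGspace\hookrightarrow\Wspace{1,p}$ (hence into $\Wspace{1,1}$) together with the classical characterization of one-dimensional Sobolev functions as absolutely continuous functions satisfying the fundamental theorem of calculus. Your extra remark identifying the weak derivative in the definition of $\WLGspace$ with the distributional derivative is a sensible detail the paper leaves implicit.
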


Directly from definition of $\WzLGspace$ we obtain important property of functions in $\WzLGspace$.
\begin{theorem}
If $u\in \WzLGspace$ then $u=0$ on $\partial I$.
\end{theorem}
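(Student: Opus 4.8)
The plan is to lean entirely on the continuous embedding $\WLGspace \hookrightarrow C(\overline{I})$ recorded just above (item (3) in the list of compact embeddings, which in particular gives a continuous embedding). The point is that convergence in the $\WLGnorm{\cdot}$ norm is strong enough to force uniform convergence of the continuous representatives on $\overline{I}$, and uniform convergence preserves boundary values. Since the generators $C^1_0(I,\R^n)$ all vanish on $\partial I$, this property must survive passage to the limit.

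Concretely, I would start by unwinding the definition of $\WzLGspace$. Given $u\in\WzLGspace$, by definition there is a sequence $\{u_k\}\subset C^1_0(I,\R^n)$ with $\WLGnorm{u_k-u}\to 0$. Next I would invoke the embedding: there is a constant $C>0$ such that $\norm{v}_{C(\overline{I})}\leq C\,\WLGnorm{v}$ for every $v\in\WLGspace$, where we identify each element of $\WLGspace$ with its absolutely continuous representative supplied by the preceding theorem. Applying this to $v=u_k-u$ gives $\norm{u_k-u}_{C(\overline{I})}\leq C\,\WLGnorm{u_k-u}\to 0$, so $u_k\to u$ uniformly on $\overline{I}$.

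To finish, I would observe that each $u_k\in C^1_0(I,\R^n)$ has compact support inside the open interval $I$, hence vanishes at the endpoints, i.e. $u_k=0$ on $\partial I$. Uniform convergence implies pointwise convergence at every point of $\overline{I}$, so for each $t\in\partial I$ we get $u(t)=\lim_{k\to\infty}u_k(t)=0$, whence $u=0$ on $\partial I$. The only genuine subtlety—and the step I would flag as the crux—is interpretive rather than computational: elements of $\WLGspace$ are equivalence classes, so the statement "$u=0$ on $\partial I$" is meaningful only for the continuous representative, and it is precisely the embedding into $C(\overline{I})$ that both makes the boundary values well defined and upgrades norm convergence to the uniform convergence needed to pass to the limit.
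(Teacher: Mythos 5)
Your proof is correct and is precisely the argument the paper leaves implicit: the paper states this theorem with no proof beyond the remark that it follows ``directly from definition,'' relying on the embedding $\WLGspace\hookrightarrow C(\overline{I})$ and the bound \eqref{eq:Linf<WLG} recorded just above, exactly as you do. Your elaboration---approximating $u$ by $C^1_0$ functions, upgrading $\WLGnorm{\cdot}$-convergence to uniform convergence via that embedding, and passing boundary values to the limit---is the intended reasoning, including the correct observation that the statement concerns the continuous representative.
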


Using embeddings mentioned above we have for every $u\in \WLGspace$
\begin{equation}
    \label{eq:Linf<WLG}
\Lpnorm[\infty]{u}\leq C \WLGnorm{u}.
\end{equation}

\begin{theorem}[Sobolev inequality]
For every function $u\in \WLGspace$
\[
    \LGnorm{u-u_I}\leq \mu(I)\LGnorm{\dot{u}}
\]
where $u_I=\frac{1}{\mu(I)}\int_Iu$.
\end{theorem}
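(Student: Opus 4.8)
The plan is to verify directly, from the definition of the Luxemburg norm, that
\[
\int_I G\Bigl(\frac{u-u_I}{\mu(I)\LGnorm{\dot u}}\Bigr)\,dt \leq 1,
\]
since by the characterization recalled just after the definition of $\LGnorm{\cdot}$ this inequality is precisely the assertion $\LGnorm{u-u_I}\leq \mu(I)\LGnorm{\dot u}$. Write $\alpha=\LGnorm{\dot u}$, so that $\int_I G(\dot u/\alpha)\,dt\leq 1$.

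First I would invoke the preceding theorem, by which $u$ has an absolutely continuous representative satisfying $u(t)-u(s)=\int_s^t\dot u(\tau)\,d\tau$ for all $s,t\in I$, and rewrite the deviation from the mean as an average of increments, $u(t)-u_I=\frac{1}{\mu(I)}\int_I(u(t)-u(s))\,ds$. Applying the Jensen integral inequality established in Section~\ref{sec:G} to the outer average over $s$ gives
\[
G\Bigl(\frac{u(t)-u_I}{\mu(I)\alpha}\Bigr)\leq \frac{1}{\mu(I)}\int_I G\Bigl(\frac{u(t)-u(s)}{\mu(I)\alpha}\Bigr)\,ds.
\]

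The core step is the pointwise bound on the integrand. For $s\leq t$ set $\lambda=(t-s)/\mu(I)\in[0,1]$ and factor $\frac{u(t)-u(s)}{\mu(I)\alpha}=\lambda\cdot\frac{1}{t-s}\int_s^t\frac{\dot u(\tau)}{\alpha}\,d\tau$. Since $0\leq\lambda\leq 1$, the homogeneity relation $G(\lambda x)\leq\lambda G(x)$ together with a second Jensen inequality, now for the average of $\dot u/\alpha$ over $[s,t]$, yields
\[
G\Bigl(\frac{u(t)-u(s)}{\mu(I)\alpha}\Bigr)\leq \frac{1}{\mu(I)}\int_s^t G\Bigl(\frac{\dot u(\tau)}{\alpha}\Bigr)\,d\tau\leq \frac{1}{\mu(I)}\int_I G\Bigl(\frac{\dot u(\tau)}{\alpha}\Bigr)\,d\tau\leq\frac{1}{\mu(I)};
\]
the case $s>t$ is identical once one uses that $G$ is even (\ref{G:symmetric}) to replace $u(t)-u(s)$ by $u(s)-u(t)$. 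Substituting this uniform bound into the previous display and integrating over $t\in I$ collapses the resulting double integral to $\int_I G(\dot u/\alpha)\,d\tau\leq 1$, which is exactly the modular estimate sought.

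The argument is thus essentially a twofold application of Jensen, and no deep property of $G$ beyond convexity, homogeneity, and evenness enters. I expect the only delicate point to be bookkeeping: keeping the normalization by $\mu(I)$ consistent across both averages and using $\lambda\leq 1$ through $G(\lambda x)\leq\lambda G(x)$ to absorb the varying length $t-s$ of the inner interval. Getting this interplay between the factor $\mu(I)$ and the interval length $t-s$ exactly right—rather than any spectral or duality input—is the main thing to handle with care.
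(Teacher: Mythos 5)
Your proof is correct, and it takes a genuinely different --- in fact more robust --- route than the paper. The paper anchors the argument at a single point: it invokes the mean value theorem to produce $t_0\in I$ with $u(t_0)=u_I$, writes $u(t)-u(t_0)=\int_{t_0}^t\dot u\,d\tau$, and then needs only \emph{one} application of Jensen's inequality along $[t_0,t]$, absorbing the interval length through $G(\lambda x)\leq\lambda G(x)$ with $\lambda=|t-t_0|/\mu(I)\leq 1$. You instead represent the deviation from the mean as an average of increments, $u(t)-u_I=\frac{1}{\mu(I)}\int_I(u(t)-u(s))\,ds$, and pay for the missing anchor point with a second application of Jensen (over $s$) and with the evenness of $G$ to handle $s>t$; your inner estimate for $G\bigl(\frac{u(t)-u(s)}{\mu(I)\alpha}\bigr)$ (with your normalization $\alpha=\LGnorm{\dot u}$) is then identical in spirit to the paper's single-anchor estimate, and both arguments deliver the same constant $\mu(I)$. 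What your version buys is dimension-independence: the paper's mean value step is guaranteed only for scalar functions, and for $\R^N$-valued $u$ with $N\geq 2$ a point $t_0$ with $u(t_0)=u_I$ need not exist (take $u(t)=(\cos t,\sin t)$ on $[0,2\pi]$, whose mean is $0$ but which never vanishes). Since the paper is precisely about vector-valued functions, your double-averaging argument actually repairs this weak point, at the modest cost of the extra bookkeeping you describe. One trivial case to set aside in a final write-up: if $\LGnorm{\dot u}=0$ then $u$ is constant and the inequality is immediate; both your argument and the paper's divide by $\LGnorm{\dot u}$ and so tacitly assume it is positive.
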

\begin{proof}
Since $u$ is absolutely continuous, there exists $t_0\in I$ such that
$u(t_0)=\frac{1}{\mu(I)}\int_I u$ and for every $t\in I$ we have
    \[
    u(t)-u(t_0)=\int_{t_0}^t \dot{u}\, dt.
    \]
    By Jensen's inequality,
    \begin{multline*}
        G\left(\frac{u(t)-u(t_0)}{\mu(I)\LGnorm{\dot{u}}}\right)
        =
        G\left(\frac{1}{|t-t_0|}\int_{t_0}^t
        \frac{|t-t_0|}{\mu(I)}\frac{\dot{u}}{\LGnorm{\dot{u}}}\,dt\right)
        \leq\\\leq
        \frac{1}{|t-t_0|}
        \int_{t_0}^t
            G\left(
                \frac{|t-t_0|}{\mu(I)} \frac{\dot{u}}{\LGnorm{\dot{u}}}
            \right)\,dt
        \leq
        \frac{1}{\mu(I)} \int_I
            G \left(
                \frac{\dot{u}}{\LGnorm{\dot{u}}}
              \right)\,dt
        \leq \frac{1}{\mu(I)}.
    \end{multline*}
    Integrating both sides over $I$ we get
    \[
    \int_I G\left(\frac{u-u(t_0)}{\mu(I)\LGnorm{\dot{u}}}\right)\,dt\leq 1.
    \]
    Thus $\LGnorm{u-u_I}\leq \mu(I)\LGnorm{\dot{u}}$
\end{proof}

In similar way we get

\begin{theorem}[Poincare inequality]
 For every $u\in \WzLGspace$
 \[
  \LGnorm{u}\leq \mu(I) \LGnorm{\dot{u}}.
 \]
\end{theorem}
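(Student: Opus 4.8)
The plan is to mirror the proof of the Sobolev inequality above, replacing the mean-value point by the boundary. First I would write $I=(a,b)$ and invoke the theorem that every $u\in\WzLGspace$ vanishes on $\partial I$, so that the absolutely continuous representative of $u$ satisfies $u(a)=0$. Combining this with the fundamental theorem of calculus for absolutely continuous functions gives, for every $t\in I$,
\[
u(t)=u(t)-u(a)=\int_a^t\dot u(s)\,ds.
\]
This is the only structural change from the Sobolev case: there one subtracts $u(t_0)$ with $t_0$ chosen so that $u(t_0)$ equals the mean of $u$, whereas here the Dirichlet condition lets us subtract the genuine value $u(a)=0$, so no averaging term survives.

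Next I would normalize and rewrite the argument of $G$ as an average over $[a,t]$, namely
\[
\frac{u(t)}{\mu(I)\LGnorm{\dot u}}=\frac{1}{|t-a|}\int_a^t\frac{|t-a|}{\mu(I)}\frac{\dot u(s)}{\LGnorm{\dot u}}\,ds,
\]
so that Jensen's integral inequality (valid since $G$ is convex) yields
\[
G\left(\frac{u(t)}{\mu(I)\LGnorm{\dot u}}\right)\le\frac{1}{|t-a|}\int_a^t G\left(\frac{|t-a|}{\mu(I)}\frac{\dot u(s)}{\LGnorm{\dot u}}\right)ds.
\]
Since $t\in(a,b)$ forces $0\le|t-a|/\mu(I)\le1$, the homogeneity property $G(\alpha x)\le\alpha G(x)$ for $0\le\alpha\le1$ lets me pull the factor $|t-a|/\mu(I)$ out of $G$; the two occurrences of $|t-a|$ then cancel and the domain of integration may be enlarged to all of $I$, giving
\[
G\left(\frac{u(t)}{\mu(I)\LGnorm{\dot u}}\right)\le\frac{1}{\mu(I)}\int_I G\left(\frac{\dot u(s)}{\LGnorm{\dot u}}\right)ds\le\frac{1}{\mu(I)},
\]
where the last bound uses $\int_I G(\dot u/\LGnorm{\dot u})\,dt=1$.

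Finally I would integrate this pointwise estimate over $t\in I$ to obtain $\int_I G\left(u/(\mu(I)\LGnorm{\dot u})\right)\,dt\le1$, and then read off $\LGnorm{u}\le\mu(I)\LGnorm{\dot u}$ directly from the characterization of the Luxemburg norm in terms of the modular. I do not expect any serious obstacle: the computation is essentially identical to the Sobolev inequality, and the only points requiring care are the reduction to the endpoint $a$, i.e.\ justifying $u(a)=0$ for the absolutely continuous representative, and checking that $|t-a|\le\mu(I)$ uniformly in $t$ so that the homogeneity bound genuinely applies with exponent $\alpha\le1$.
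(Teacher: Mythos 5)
Your proof is correct and is exactly the argument the paper intends: the paper proves the Poincar\'e inequality ``in similar way'' to the Sobolev inequality, i.e.\ by the same Jensen-plus-homogeneity computation with the mean-value point $t_0$ replaced by the endpoint $a$ where $u$ vanishes. Your execution of that computation, including the cancellation of $|t-a|$ and the final reading of the modular bound as a Luxemburg norm bound, matches the paper's approach step for step.
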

It follows that one can introduce equivalent norm in $\WzLGspace$:
\[
\WzLGnorm{u}=\LGnorm{\dot{u}}.
\]
Every linear functional $F$ on $\WzLGspace$ can be represented in the form
  \[
  F(u)=\int_I \inner{u}{v_0}+\inner{\dot{u}}{v_1}\,dt.
  \]
Where $v_0,v_1\in \LGspace[G^\ast]$. Moreover,
\[
\norm{F}=\max\{\LGnorm[G^\ast]{v_0},\LGnorm[G^\ast]{v_1}\}.
\]
In the case of Sobolev space $\Wspace{1,p}$ the proof is given in \cite[proposition 8.14]{Bre11},
but it remains the same for Orlicz-Sobolev spaces. As was pointed out in \cite{Bre11}, the first
assertion of the above proposition holds for every linear functional on $\WLGspace$.

\section{Variational setting}
\label{sec:functional}

In this section we examine the principal part
\begin{equation}
    \label{eq:functional}
\ISymbol(u)=\int_I F(t,u,\dot{u})\, dt
\end{equation}
of the variational functional associated with Euler-Lagrange equation
\[
\frac{d}{dt} F_v(t,u,\dot{u})=F_x(t,u,\dot{u})+\nabla V(t,u), \quad t\in I
\]
where $u\colon I\to \R^N$ and the Lagrangian $L\colon I\times \R^N\times\R^N\to \R$ is given by
$L(t,x,v)=F(t,x,v)+V(t,x)$.

In definition of the Orlicz space we need not to assume that $G$ is differentiable, but when we
consider the functional $\ISymbol$ we need it to show that $\ISymbol\in C^1$. Throughout this
section we will assume, in addition to \ref{G:0in0}--\ref{G:nabla2}, that $G$ satisfies

\begin{enumerate}[resume*=G]
    \item \label{G:class}
    $G$ is of a class $C^1.$
\end{enumerate}

\begin{remark}
Differentiability of $f$ is not sufficient to differentiability of $f^\ast$. But if $f$ is
finite, strictly convex, 1-coercive and differentiable then so is $f^\ast$. This result is in
close relation with Legendre duality (see \cite[p. 239]{HirLem04} and \cite{MawWil89} for more
details).
\end{remark}

It is well known that if $G$ is continuously differentiable then for all $x,y\in \R^n$
\begin{equation}
\label{eq:G-G<inner}
           G(x)-G(x-y)\leq \inner{\nabla G(x)}{y}\leq G(x+y)-G(x)
\end{equation}
and
\begin{equation*}
    \label{eq:G+Gast=inner}
    \inner{x}{\nabla G(x)}=G(x)+G^\ast(\nabla G(x)).
\end{equation*}
Let $y=x$ in \eqref{eq:G-G<inner}. Then
$\inner{\nabla G(x)}{x}\leq G(2x)-G(x)$.
Therefore, for all $x\in \R^N$
\[
  G^{\ast}(\nabla G(x))\leq G(2x).
\]
Directly from the above we get
\begin{proposition}
\label{eq:nablaGinG*}
If $u\in \LGspace$ then $\nabla G(u)\in \LGspace[G^\ast]$.
\end{proposition}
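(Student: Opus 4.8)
The plan is to read off the conclusion from the pointwise inequality established immediately before the statement, namely
\[
  G^{\ast}(\nabla G(x))\leq G(2x)\quad\text{for all }x\in\R^N,
\]
and then to integrate. By definition, membership $\nabla G(u)\in\LGspace[{G^\ast}]$ is precisely the assertion that $\int_I G^{\ast}(\nabla G(u))\,dt<\infty$, so the whole task reduces to controlling this integral. Applying the inequality pointwise with $x=u(t)$ and integrating over $I$ gives
\[
  \int_I G^{\ast}(\nabla G(u))\,dt\leq \int_I G(2u)\,dt,
\]
and it remains only to show the right-hand side is finite.

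Here is where the $\Delta_2$ condition enters, which is the one genuinely necessary ingredient. Using the corollary of Proposition \ref{lemma:delat2charact} with $\alpha=2$, there exist $C_2>0$ and $K_1(2)>0$ such that $G(2u(t))\leq C_2+K_1(2)\,G(u(t))$ for all $t$. Integrating this bound yields
\[
  \int_I G(2u)\,dt\leq C_2\,\mu(I)+K_1(2)\int_I G(u)\,dt.
\]
Since $u\in\LGspace$ means exactly $\int_I G(u)\,dt<\infty$, and $\mu(I)<\infty$ because $I$ is a finite interval, the right-hand side is finite. Chaining the two displayed estimates gives $\int_I G^{\ast}(\nabla G(u))\,dt<\infty$, which is the claim.

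I do not anticipate a real obstacle, since the hard analytic content is already packaged in the inequality $G^{\ast}(\nabla G(x))\leq G(2x)$ and in Proposition \ref{lemma:delat2charact}. The only point worth flagging is that the argument is exactly as strong as the $\Delta_2$ assumption: without it, $u\in\LGspace$ would not force $2u\in\LGspace$, the integral $\int_I G(2u)\,dt$ could diverge, and the conclusion would fail. Thus the step controlling $\int_I G(2u)\,dt$ by $\int_I G(u)\,dt$ is the load-bearing one, and invoking the $\Delta_2$-characterization there is the natural way to carry it out.
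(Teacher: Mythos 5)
Your proof is correct and is essentially the paper's own argument: the paper establishes $G^{\ast}(\nabla G(x))\leq G(2x)$ immediately before the proposition and then asserts the result ``directly from the above,'' the unstated details being exactly your two steps of integrating the pointwise bound and using the $\Delta_2$ consequence $G(2x)\leq C_2+K_1(2)G(x)$ (equivalently, the already-proved fact that $\LGspace$ is a vector space) to see that $\int_I G(2u)\,dt<\infty$.
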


\begin{lemma}[{cf. \cite[lemma A.5]{Cletal04}}]
    \label{lem:convRG*nabla}
If $u_n\to u$ in $\LGspace$ then $R_{G^\ast}(\nabla G(u_n))\to R_{G^\ast}(\nabla G(u)).$

\end{lemma}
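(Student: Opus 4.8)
The plan is to combine the pointwise domination $G^\ast(\nabla G(x))\le G(2x)$, established just above, with the $\Delta_2$-type bound coming from Proposition~\ref{lemma:delat2charact}, and then to reduce the claim to the classical Lebesgue dominated convergence theorem via a subsequence argument. Since a sequence of real numbers converges to a given limit precisely when every subsequence admits a further subsequence converging to that limit, it suffices to show that every subsequence of $\{u_n\}$ has a further subsequence along which $R_{G^\ast}(\nabla G(u_n))\to R_{G^\ast}(\nabla G(u))$. So I would fix an arbitrary subsequence; it still converges to $u$ in $\LGspace$, and by the remarks following Lemma~\ref{lem:untou_implies_RGuntoRGu} I may pass to a further subsequence (which I relabel $\{u_n\}$) such that $u_n\to u$ a.e.\ and $G(u_n)\le h$ a.e.\ for some $h\in\Lpspace[1]$.

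Next I would produce an integrable majorant for $G^\ast(\nabla G(u_n))$. From $G^\ast(\nabla G(x))\le G(2x)$ together with the consequence of Proposition~\ref{lemma:delat2charact} (namely $G(2x)\le C_2+K_1(2)G(x)$), I obtain
\[
G^\ast(\nabla G(u_n))\le G(2u_n)\le C_2+K_1(2)\,G(u_n)\le C_2+K_1(2)\,h \quad\text{a.e.}
\]
The right-hand side lies in $\Lpspace[1]$ because $h\in\Lpspace[1]$ and $I$ is finite. Moreover, since $G\in C^1$ the gradient $\nabla G$ is continuous, and $G^\ast$ is continuous as well, so $x\mapsto G^\ast(\nabla G(x))$ is continuous; combined with $u_n\to u$ a.e.\ this gives $G^\ast(\nabla G(u_n))\to G^\ast(\nabla G(u))$ a.e.

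Finally, the classical Lebesgue dominated convergence theorem applies to the real-valued integrands $G^\ast(\nabla G(u_n))$, yielding $\int_I G^\ast(\nabla G(u_n))\,dt\to\int_I G^\ast(\nabla G(u))\,dt$, that is $R_{G^\ast}(\nabla G(u_n))\to R_{G^\ast}(\nabla G(u))$ along the relabeled subsequence; the subsequence principle then gives convergence of the full sequence. I expect the main obstacle to be the construction of the integrable majorant: in the anisotropic setting there is no useful pointwise bound on $|u_n|$, so the domination must be carried out at the level of $G(u_n)$ rather than $|u_n|$, which is precisely why the extraction granting $G(u_n)\le h\in\Lpspace[1]$ (rather than $|u_n|\le h$) is essential, and why $G^\ast(\nabla G(x))\le G(2x)$ must be combined with $\Delta_2$ to return the majorant to the same scale as $G(u_n)$.
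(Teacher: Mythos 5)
Your proposal is correct and follows essentially the same route as the paper's proof: extract a subsequence with $u_n\to u$ a.e.\ and $G(u_n)\le h\in\Lpspace[1]$, dominate via $G^\ast(\nabla G(x))\le G(2x)\le C+K_1(2)G(x)$, apply the classical dominated convergence theorem, and conclude by the subsequence principle. Your explicit statement of the subsequence principle at the outset is in fact slightly more careful than the paper's closing remark, but the argument is identical in substance.
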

\begin{proof}
There exists a subsequence $\{u_{n_k}\}$ such that $u_{n_k}\to u$ a.e., $G(u_{n_k})\to G(u)$
a.e. and $G(u_{n_k})\leq h\in \Lpspace[1](I,\R)$. By continuity of $\nabla G$ and $G^\ast$ we have
$\nabla
G(u_{n_k})\to \nabla G(u)$ a.e. and
\[
G^\ast(\nabla G(u_{n_k}))\to G^\ast(\nabla G(u)) \text{ a.e.}
\]
Since $G^\ast(\nabla G(x))\leq G(2x)$,
\[
  G^\ast(\nabla G(u_{n_k}))\leq G(2u_{n_k})\leq C+K_1 G(u_{n_k}) \leq C+K_1h.
\]
By dominated convergence theorem $R_{G^\ast}(\nabla G(u_{n_k}))\to R_{G^\ast}(\nabla G(u))$. Since
this holds for any subsequence of $\{u_n\}$ we have that
\[
R_{G^\ast}(\nabla G(u_n))\to R_{G^\ast}(\nabla G(u)).
\]
\end{proof}

As a direct consequence of the above lemma and Lemma \ref{lem:rieszforLG} we obtain

\begin{proposition}
\label{eq:convG->convnablaG*}
\begin{equation*}
\LGnorm{u_n-u}\to 0 \implies \LGnorm[G^\ast]{\nabla G(u_n)- \nabla G(u)}\to 0.
\end{equation*}
\end{proposition}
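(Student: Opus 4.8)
The plan is to recognize this as an application of the Riesz-type convergence criterion, Lemma \ref{lem:rieszforLG}, carried out in the space $\LGspace[G^\ast]$ rather than in $\LGspace$. By the earlier Remark that $G$ and $G^\ast$ belong to the same class, every result established for $\LGspace$ is available verbatim for $\LGspace[G^\ast]$, so Lemma \ref{lem:rieszforLG} applies with $G^\ast$ in place of $G$. I set $v_n := \nabla G(u_n)$ and $v := \nabla G(u)$; by Proposition \ref{eq:nablaGinG*} these all lie in $\LGspace[G^\ast]$, so it is legitimate to seek $\LGnorm[G^\ast]{v_n - v}\to 0$ there. To invoke Lemma \ref{lem:rieszforLG} I need two ingredients: $v_n\to v$ almost everywhere, and $R_{G^\ast}(v_n)\to R_{G^\ast}(v)$.

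The modular ingredient is immediate: Lemma \ref{lem:convRG*nabla} states precisely that $u_n\to u$ in $\LGspace$ forces $R_{G^\ast}(\nabla G(u_n))\to R_{G^\ast}(\nabla G(u))$, that is $R_{G^\ast}(v_n)\to R_{G^\ast}(v)$ for the full sequence. The difficulty lies in the pointwise ingredient: norm convergence $u_n\to u$ in $\LGspace$ yields $u_n\to u$ almost everywhere only along a subsequence (exactly as exploited inside the completeness proof and the proof of Lemma \ref{lem:convRG*nabla}), not for the whole sequence, so Lemma \ref{lem:rieszforLG} cannot be applied to $\{v_n\}$ directly. This is the one genuinely non-routine point.

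I would resolve it with the standard subsequence principle: it suffices to show that every subsequence of $\{v_n\}$ has a further subsequence converging to $v$ in $\LGspace[G^\ast]$, which forces the whole sequence to converge to $v$. Fix an arbitrary subsequence $\{u_{n_j}\}$; it still converges to $u$ in $\LGspace$, hence a further subsequence $\{u_{n_{j_k}}\}$ converges to $u$ almost everywhere. By continuity of $\nabla G$ we obtain $v_{n_{j_k}} = \nabla G(u_{n_{j_k}}) \to \nabla G(u) = v$ almost everywhere, while $R_{G^\ast}(v_{n_{j_k}})\to R_{G^\ast}(v)$ persists as a subsequence of a convergent sequence of reals. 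Lemma \ref{lem:rieszforLG}, applied in $\LGspace[G^\ast]$, then gives $\LGnorm[G^\ast]{v_{n_{j_k}} - v}\to 0$. Since the initial subsequence was arbitrary, the subsequence principle yields $\LGnorm[G^\ast]{\nabla G(u_n) - \nabla G(u)}\to 0$, as claimed.
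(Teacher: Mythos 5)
Your proof is correct and takes essentially the same route as the paper, which presents the proposition as a direct consequence of Lemma \ref{lem:convRG*nabla} (for the modular convergence $R_{G^\ast}(\nabla G(u_n))\to R_{G^\ast}(\nabla G(u))$) and Lemma \ref{lem:rieszforLG} applied with $G^\ast$ in place of $G$. Your explicit treatment of the almost-everywhere convergence via the subsequence principle merely fills in a detail the paper leaves implicit; it is the same argument.
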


\subsection{Case I}
We shall first examine a special case $F(t,x,v)=G(v)$, now functional \eqref{eq:functional} takes
the form
\[
\ISymbol(u)=\int_IG(\dot{u})\,dt.
\]
\begin{theorem}
    \label{thm:IC1}
    $\ISymbol\in C^1(\WLGspace,\R)$. Moreover
    \begin{equation}
    \label{I'}
    \ISymbol'(u)v=\int_I\inner{\nabla G(\dot{u})}{\dot{\varphi}}dt.
    \end{equation}
\end{theorem}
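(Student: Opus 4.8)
The plan is to follow the classical three-step scheme for establishing $C^1$ regularity: first verify that $\ISymbol$ is finite on $\WLGspace$, then compute its Gateaux derivative and identify it with the formula \eqref{I'}, and finally show that this derivative depends continuously on $u$, which simultaneously upgrades Gateaux differentiability to Frechet differentiability. Finiteness is immediate, since for $u\in\WLGspace$ we have $\dot u\in\LGspace$, so $\ISymbol(u)=R_G(\dot u)<\infty$ by the very definition of the Orlicz space.

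For the Gateaux derivative I would fix $u,\varphi\in\WLGspace$ and study the difference quotient $\tfrac1s[\ISymbol(u+s\varphi)-\ISymbol(u)]=\int_I \tfrac1s[G(\dot u+s\dot\varphi)-G(\dot u)]\,dt$ as $s\to 0$. Because $G\in C^1$ (condition \ref{G:class}), the integrand converges pointwise to $\inner{\nabla G(\dot u)}{\dot\varphi}$, and the real work is to justify passing the limit under the integral sign; this is the step I expect to demand the most care. Here convexity provides integrable bounds. For $s\in(0,1]$, convexity gives the upper bound $\tfrac1s[G(\dot u+s\dot\varphi)-G(\dot u)]\le G(\dot u+\dot\varphi)-G(\dot u)$, while the right inequality in \eqref{eq:G-G<inner} (with $x=\dot u$, $y=s\dot\varphi$) gives the lower bound $\inner{\nabla G(\dot u)}{\dot\varphi}\le \tfrac1s[G(\dot u+s\dot\varphi)-G(\dot u)]$. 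Both envelopes are integrable: the upper one because $\LGspace$ is a vector space, so $\dot u+\dot\varphi\in\LGspace$, and the lower one because $\nabla G(\dot u)\in\LGspace[G^\ast]$ by Proposition \ref{eq:nablaGinG*}, which together with $\dot\varphi\in\LGspace$ makes $\inner{\nabla G(\dot u)}{\dot\varphi}$ integrable via the H\"older inequality. The dominated convergence theorem then delivers the one-sided limit; running the same argument with $-\varphi$ handles $s<0$, and the two limits coincide, yielding $\ISymbol'(u)\varphi=\int_I\inner{\nabla G(\dot u)}{\dot\varphi}\,dt$.

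It remains to show that $u\mapsto\ISymbol'(u)$ is continuous, which also secures Frechet differentiability and hence $\ISymbol\in C^1$. Suppose $u_n\to u$ in $\WLGspace$, so that $\dot u_n\to\dot u$ in $\LGspace$. Applying the H\"older inequality to $\ISymbol'(u_n)\varphi-\ISymbol'(u)\varphi=\int_I\inner{\nabla G(\dot u_n)-\nabla G(\dot u)}{\dot\varphi}\,dt$ and using $\LGnorm{\dot\varphi}\le\WLGnorm{\varphi}$, I would bound the operator norm by $\norm{\ISymbol'(u_n)-\ISymbol'(u)}\le 2\LGnorm[G^\ast]{\nabla G(\dot u_n)-\nabla G(\dot u)}$. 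By Proposition \ref{eq:convG->convnablaG*}, convergence of $\dot u_n$ to $\dot u$ in $\LGspace$ forces $\LGnorm[G^\ast]{\nabla G(\dot u_n)-\nabla G(\dot u)}\to 0$, so the right-hand side tends to zero. Thus $\ISymbol'$ is continuous and the theorem follows.
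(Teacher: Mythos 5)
Your proposal is correct and follows essentially the same route as the paper: finiteness is immediate, the derivative formula is obtained by passing a limit under the integral (the paper bounds $\int_I|H_s(s,t)|\,dt$ via \eqref{eq:G-G<inner} to differentiate under the integral sign, while you sandwich the difference quotients using convexity---a minor technical variant of the same dominated-convergence idea), and boundedness and continuity of $\ISymbol'$ rest on exactly the same ingredients, namely Proposition \ref{eq:nablaGinG*}, the H\"older inequality, and Proposition \ref{eq:convG->convnablaG*}.
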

\begin{proof}
The proof follows similar lines as \cite[th. 3.2]{AciBurGiuMazSch15} (see also
\cite[thm 1.4]{MawWil89}).
First, note that $\dot{u}\in\LGspace$ implies
\[
0\leq\ISymbol(u)<\infty.
\]
It suffices to show that $\ISymbol$ has at every point $u$ directional derivative
$\ISymbol'(u)\in(\WLGspace)^{\ast}$ given by \eqref{I'} and that the mapping
$\ISymbol':\WLGspace\to(\WLGspace)^{\ast}$,  is continuous.

Let $u\in\WLGspace$, $\varphi\in\WLGspace\setminus\{0\}$, $t\in I$, $s\in[-1,1]$ . Define
\[
  H(s,t):=G(\dot{u}(t)+s\dot{\varphi}(t)).
\]
By \eqref{eq:G-G<inner} we obtain
\[
\int_I|H_s(s,t)|\,dt=\int_I |\inner{\nabla G(\dot{u}+s\dot{\varphi})}{\dot{\varphi}}|\,dt
\leq
\int_I G(\dot{u}+(s+1)\dot{\varphi}) + \int_I G(\dot{u}+s\dot{\varphi})\,dt <\infty.
\]
Consequently, $\ISymbol$ has a directional derivative and
\[
\ISymbol'(u)\varphi=\frac{d}{ds}\ISymbol(u+s\varphi)\Big|_{s=0}=\int_I\inner{\nabla
G(\dot{u})}{\dot{\varphi}}dt.
\]
By Lemma \ref{eq:nablaGinG*} and  H\"older inequality
\[
|\ISymbol'(u)\varphi|
=
\Big|\int_I\inner{\nabla G(\dot{u})}{\dot{\varphi}}dt\Big|
\leq
2\LGnorm[G^{\ast}]{\nabla G(\dot{u})}\LGnorm{\varphi}\leq C\WLGnorm{\varphi}.
\]
To finish the proof it suffices to show that if $u_n\to u$ in $\WLGspace$, then $\ISymbol'(u_n)\to
\ISymbol'(u)$ in $(W^1L^G)^{\ast}$.  Using  H\"older inequality and
Proposition \ref{eq:convG->convnablaG*} we obtain
\begin{multline*}
|\ISymbol'(u_{n})\varphi-\ISymbol'(u)\varphi|
=
\left|\int_I\inner{\nabla G(\dot{u}_n)-\nabla G(\dot{u})}{\dot{\varphi}} \,dt\right|
\leq 
2\LGnorm[G^{\ast}]{\nabla G(\dot{u}_n)-\nabla G(\dot{u})}\LGnorm{\dot{\varphi}}\to 0.
\end{multline*}
\end{proof}

\subsection{Case II}
We turn to general case. Suppose that $F\colon I\times\R^N\times\R^N\to \R$ satisfies
 \begin{enumerate}[label=($F_\arabic*$),series=F]
     \item
     \label{F:C1MW}
     $F\in C^1$
    \item
    \label{F:estMW}
    $ |F(t,x,v)|\leq a(|x|)(b(t)+G(v)), $
    \item
    \label{Fx:estMW}
    $|F_x(t,x,v)|\leq a(|x|)(b(t)+G(v)),$
    \item
    \label{Fv:estMW}
    $G^{\ast}(F_v(t,x,v))\leq a(|x|)(c(t)+G^{\ast}(\nabla G(v))).$
\end{enumerate}
where $a\in C(\R_+,\R_+)$, $b,c\in\LGspace[1](I,\R_+).$

If $G(v)=|v|^p $ then conditions \ref{F:estMW}, \ref{Fx:estMW} and \ref{Fv:estMW} take the standard
form (Theorem 1.4 from \cite{MawWil89}). In \cite{AciBurGiuMazSch15}  there are similar conditions
with $G(v)=\Phi(|v|)$, where $\Phi$  is an N-function. In this case, condition \ref{Fv:estMW} takes
the form $\label{Fv:est_acinasMW} |F_v(t,x,v)|\leq \tilde{a}(|x|)(\tilde{c}(t)+\Phi'(|u|)) $. In
anisotropic case we need to use $G^\ast$, because vector valued G-function is not necessarily
monotone with respect to $|\cdot|$.

Directly from \ref{Fx:estMW}, \ref{Fv:estMW} and Proposition \ref{eq:nablaGinG*} we have

\begin{lemma}
\label{lem:FxLGFvLG*}
If $u\in\WLGspace$, then $F_x(\cdot,u,\dot{u})\in\LGspace[1]$
and $F_v(\cdot,u,\dot{u})\in\LGspace[G^{\ast}]$.
\end{lemma}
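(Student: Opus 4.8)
The plan is to verify the two claims separately, each following directly from the corresponding growth condition together with the compact embedding $\WLGspace \hookrightarrow\hookrightarrow C(\overline I)$ established earlier. Let $u \in \WLGspace$. Since $u$ is absolutely continuous and $I$ is bounded, the embedding into $C(\overline I)$ gives $\Lpnorm[\infty]{u} < \infty$, so the image $u(I)$ is contained in a bounded set and $M := \sup_{t\in I} |u(t)| < \infty$. The function $a \in C(\R_+,\R_+)$ is continuous, hence bounded on $[0,M]$; set $A := \sup_{0\le s \le M} a(s) < \infty$. This uniform bound on $a(|u(t)|)$ is what converts the pointwise growth estimates into genuine integrability statements, and it is the one place where the Orlicz-Sobolev structure (via the $C(\overline I)$ embedding) is essential.

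For the first assertion I would start from condition \ref{Fx:estMW}, which gives the pointwise bound
\[
|F_x(t,u(t),\dot u(t))| \le a(|u(t)|)\big(b(t) + G(\dot u(t))\big) \le A\big(b(t) + G(\dot u(t))\big).
\]
Now $b \in \LGspace[1](I,\R_+)$ by hypothesis, and $G(\dot u) \in \LGspace[1]$ precisely because $\dot u \in \LGspace$ means $R_G(\dot u) = \int_I G(\dot u)\,dt < \infty$. Hence the right-hand side is in $\Lpspace[1]$, and integrating the pointwise bound shows $F_x(\cdot,u,\dot u) \in \LGspace[1]$.

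For the second assertion I would argue analogously using \ref{Fv:estMW}. That condition directly estimates $G^\ast(F_v(t,u,\dot u))$ rather than $|F_v|$, which is the natural quantity since membership in $\LGspace[G^\ast]$ is governed by the modular $R_{G^\ast}$. Applying the uniform bound on $a$ gives
\[
G^\ast\big(F_v(t,u(t),\dot u(t))\big) \le A\big(c(t) + G^\ast(\nabla G(\dot u(t)))\big).
\]
Here $c \in \LGspace[1](I,\R_+)$ by hypothesis, and Proposition \ref{eq:nablaGinG*} yields $\nabla G(\dot u) \in \LGspace[G^\ast]$, which is exactly the statement that $G^\ast(\nabla G(\dot u)) \in \Lpspace[1]$. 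Thus the right-hand side is integrable, so $R_{G^\ast}(F_v(\cdot,u,\dot u)) < \infty$, which means $F_v(\cdot,u,\dot u) \in \LGspace[G^\ast]$. I do not anticipate any serious obstacle: the proof is essentially bookkeeping, and the only subtle point is recognizing that \ref{Fv:estMW} is phrased via $G^\ast$ precisely so that it feeds directly into the modular criterion for $\LGspace[G^\ast]$ without ever needing a bound on $|F_v|$ itself (which would be problematic given the lack of monotonicity of the anisotropic norm).
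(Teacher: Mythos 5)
Your proof is correct and follows essentially the same route as the paper: bound $a(|u(t)|)$ uniformly via the $\Lpspace[\infty]$ (continuous) embedding of $\WLGspace$ together with continuity of $a$, then integrate \ref{Fx:estMW} against $b+G(\dot u)\in\Lpspace[1]$ and \ref{Fv:estMW} against $c+G^\ast(\nabla G(\dot u))\in\Lpspace[1]$, the latter by Proposition \ref{eq:nablaGinG*}. The only cosmetic difference is that the paper packages the bound as a non-decreasing function $\alpha(s)=\sup_{\tau\in[0,s]}a(\tau)$ evaluated at $C\WLGnorm{u}$ (a form it reuses later for sequences), whereas you take the supremum of $a$ over $[0,\sup_t|u(t)|]$ directly; also, only continuity of the embedding into $C(\overline{I})$ is needed, not compactness.
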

\begin{proof}
Define non decreasing function
\[
\alpha(s)=\sup_{\tau\in[0,s]}a(\tau).
\]
Then, for $u\in\WLGspace$ we have
\begin{equation}
\label{eq:alpha}
a(|u(t)|)\leq \alpha(\LGnorm[\infty]{u})\leq \alpha(C\WLGnorm{u}).
\end{equation}
Let $u\in\WLGspace$. By \eqref{eq:alpha} and \ref{Fx:estMW}
\[
\int_I |F_x(t,u,\dot{u})|\,dt
\leq
\int_I a(|u(t)|)(b(t)+G(\dot{u}))\,dt
\leq
\alpha(C\WLGnorm{u})\int_I(b(t)+G(\dot{u}))\,dt<\infty.
\]
Moreover, by Proposition \ref{eq:nablaGinG*} and \ref{Fv:estMW}
\[
\int G^{\ast}(F_v(t,u,\dot{u}))\,dt\
\leq
\alpha(C\WLGnorm{u})\int_I(c(t)+G^{\ast}(\nabla G(\dot{u})))\,dt<\infty.\]
\end{proof}

\begin{theorem}
    \label{thm:IFabcC1}
    $\ISymbol\in C^1(\WLGspace,\R)$. Moreover
    \begin{equation}
    \label{IF'abc}
    \ISymbol'(u)\varphi=\int_I\inner{F_x(t,u,\dot{u})}{\varphi}dt+\int_I\inner{
        F_v(t,u,\dot{u})}{\dot{\varphi}}dt.
    \end{equation}
\end{theorem}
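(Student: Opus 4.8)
The plan is to follow the three-step scheme used in Theorem \ref{thm:IC1}: show that $\ISymbol$ is finite and has the directional derivative \eqref{IF'abc}, that this derivative is a bounded linear functional, and that $u\mapsto\ISymbol'(u)$ is continuous from $\WLGspace$ into $(\WLGspace)^\ast$. Finiteness is immediate from \ref{F:estMW}: since $\WLGspace\hookrightarrow\Lpspace[\infty]$ by \eqref{eq:Linf<WLG}, the factor $a(|u(t)|)$ is bounded on $I$ by $\alpha(C\WLGnorm{u})$ as in \eqref{eq:alpha}, while $b+G(\dot u)\in\Lpspace[1]$, so $|\ISymbol(u)|\leq\alpha(C\WLGnorm{u})\int_I(b+G(\dot u))\,dt<\infty$.

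To obtain the directional derivative I would fix $u,\varphi\in\WLGspace$ and set $H(s,t)=F(t,u+s\varphi,\dot u+s\dot\varphi)$ for $s\in[-1,1]$, then differentiate under the integral sign. This needs a uniform $\Lpspace[1]$ bound on
\[
H_s(s,t)=\inner{F_x(t,u+s\varphi,\dot u+s\dot\varphi)}{\varphi}+\inner{F_v(t,u+s\varphi,\dot u+s\dot\varphi)}{\dot\varphi}.
\]
For $|s|\leq 1$ the argument $u+s\varphi$ stays uniformly bounded, so $a(|u+s\varphi|)$ is controlled by a constant. For the first summand I would use \ref{Fx:estMW} together with convexity, evenness and \eqref{G:properties:nondecreasing} to bound $G(\dot u+s\dot\varphi)\leq\tfrac12 G(2\dot u)+\tfrac12 G(2\dot\varphi)$, which lies in $\Lpspace[1]$ by the $\Delta_2$ estimate following Proposition \ref{lemma:delat2charact}. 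For the second summand I would apply the Fenchel inequality, $|\inner{F_v}{\dot\varphi}|\leq G^\ast(F_v)+G(\dot\varphi)$, then \ref{Fv:estMW} and the bound $G^\ast(\nabla G(x))\leq G(2x)$ to dominate $G^\ast(\nabla G(\dot u+s\dot\varphi))$ by an $\Lpspace[1]$ function, uniformly in $s$. This yields \eqref{IF'abc}; boundedness of $\ISymbol'(u)$ then follows from Lemma \ref{lem:FxLGFvLG*} and the H\"older inequality as in Theorem \ref{thm:IC1}, giving $|\ISymbol'(u)\varphi|\leq(\Lpnorm[1]{F_x}+2\LGnorm[{G^\ast}]{F_v})\,C\WLGnorm{\varphi}$.

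The step I expect to be the main obstacle is continuity of $\ISymbol'$. Given $u_n\to u$ in $\WLGspace$, the compact embedding $\WLGspace\hookrightarrow\hookrightarrow C(\overline I)$ gives uniform convergence $u_n\to u$, while $\dot u_n\to\dot u$ in $\LGspace$; the norms $R:=\sup_n\WLGnorm{u_n}$ are finite. Splitting $\ISymbol'(u_n)\varphi-\ISymbol'(u)\varphi$ into its $F_x$ and $F_v$ parts and estimating by \eqref{eq:Linf<WLG} and the H\"older inequality, it suffices to prove
\[
F_x(\cdot,u_n,\dot u_n)\to F_x(\cdot,u,\dot u)\ \text{in}\ \Lpspace[1]
\quad\text{and}\quad
F_v(\cdot,u_n,\dot u_n)\to F_v(\cdot,u,\dot u)\ \text{in}\ \LGspace[{G^\ast}].
\]
I would establish both through a subsequence argument: it is enough that every subsequence admit a further subsequence along which these convergences hold. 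Along a subsequence with $\dot u_n\to\dot u$ a.e.\ and $G(\dot u_n)\leq h\in\Lpspace[1]$ (as supplied after Lemma \ref{lem:untou_implies_RGuntoRGu}), continuity of $F_x,F_v$ gives a.e.\ convergence of the integrands. For the $F_x$ part, \ref{Fx:estMW} furnishes the dominating function $|F_x(\cdot,u_n,\dot u_n)|\leq\alpha(CR)(b+h)\in\Lpspace[1]$, so classical dominated convergence applies. For the $F_v$ part the anisotropy is decisive: no pointwise bound on $|F_v|$ is available, so I would instead dominate the modular. By \ref{Fv:estMW} and $G^\ast(\nabla G(x))\leq G(2x)\leq C+K_1 G(x)$ one gets $G^\ast(F_v(\cdot,u_n,\dot u_n))\leq\alpha(CR)(c+C+K_1 h)\in\Lpspace[1]$, whence the anisotropic dominated convergence Theorem \ref{thm:dominatedconvergence}, applied to $G^\ast$ (which satisfies \ref{G:0in0}--\ref{G:nabla2}), yields convergence in $\LGspace[{G^\ast}]$. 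As the limit is independent of the chosen subsequence, the full sequences converge, and hence $\ISymbol'(u_n)\to\ISymbol'(u)$ in $(\WLGspace)^\ast$.
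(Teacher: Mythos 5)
Your proposal is correct and follows essentially the same route as the paper's proof: finiteness from \ref{F:estMW} and \eqref{eq:alpha}, the directional derivative via differentiation under the integral with the Fenchel inequality and $G^\ast(\nabla G(x))\leq G(2x)$, boundedness via H\"older and \eqref{eq:Linf<WLG}, and continuity of $\ISymbol'$ via the subsequence argument with the domination $G(\dot u_{n_k})\leq h\in\Lpspace[1]$. The only cosmetic difference is that for the $F_v$ term you invoke Theorem \ref{thm:dominatedconvergence} (for $G^\ast$) directly, whereas the paper establishes modular convergence of $G^\ast(F_v)$ and then applies Lemma \ref{lem:rieszforLG} --- the same argument, since that theorem is itself a corollary of the lemma; if anything, your uniform-in-$s$ domination and the explicit $\Lpspace[1]$ convergence of $F_x(\cdot,u_n,\dot u_n)$ tighten two points the paper leaves implicit.
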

\begin{proof}
By \ref{F:estMW},
\begin{equation*}
|\ISymbol(u)|\leq \int_I a(|u|)(b(t)+G(\dot{u}))dt
\leq
\alpha(\WLGnorm{u})\int_I(b(t)+G(\dot{u}))dt <\infty.
\end{equation*}
It suffices to show that directional derivative $\ISymbol'(u)\in(\WLGspace)^{\ast}$ exists, is
given by \eqref{IF'abc} and that the mapping $\ISymbol':\WLGspace\to(\WLGspace)^{\ast}$  is
continuous.

Let $u\in\WLGspace$, $\varphi\in\WLGspace\setminus\{0\}$, $t\in I$, $s\in[-1,1]$. Define
\[
H(s,t):=F(t,u+s\varphi,\dot{u}+s\dot{\varphi}).
\]
By \ref{Fx:estMW}, continuity of $\varphi$, \eqref{eq:alpha} and the
fact that $u+s\varphi\in\WLGspace$ we obtain
\begin{multline*}
\int_I|\inner{F_x(t,u+s\varphi,\dot{u}+s\dot{\varphi})}{\varphi}|\,dt
\leq
\int_I|F_x(t,u+s\varphi,\dot{u}+s\dot{\varphi})||\varphi|\,dt
\leq\\ \leq
\int_Ia(|u+sv|) (b(t)+G(\dot{u}+s\dot{\varphi}))|\varphi|\,dt
\leq \\ \leq
\alpha(\WLGnorm{u+s\varphi})\int_I (b(t)+G(\dot{u}+s\dot{\varphi}))|\varphi|\,dt <\infty.
\end{multline*}
By the Fenchel inequality, \ref{Fv:estMW} and Lemma \ref{lem:FxLGFvLG*} we obtain
\[
\int_I|\inner{F_v(t,u+s\varphi,\dot{u}+s\dot{\varphi})}{\dot{\varphi}}|dt
\leq
\int_I[G^{\ast}(F_v(t,u+s\varphi,\dot{u}+s\dot{\varphi}))+G(\dot{\varphi})]dt<\infty.
\]
It follows that
\[
\int_I|H_s(s,t)|dt
=
\int_I|\inner{F_x(t,u+s\varphi,\dot{u}+s\dot{\varphi})}{\varphi}+
    \inner{F_v(t,u+s\varphi,\dot{u}+s\dot{\varphi})}{\varphi}|dt<\infty.
\]
Consequently, $\ISymbol$ has a directional derivative and
\[
\ISymbol'(u)\varphi
=
\frac{d}{ds}\ISymbol(u+s\varphi)\Big|_{s=0}
=
\int_I\inner{F_x(t,u,\dot{u})}{\varphi}dt+\int_I\inner{F_v(t,u,\dot{u})}{\dot{\varphi}}dt.
\]
By Lemma \ref{lem:FxLGFvLG*}, the H\"older inequality and \eqref{eq:Linf<WLG} we get
\[
|\ISymbol'(u)\varphi|
\leq
    \LGnorm[1]{F_x(t,u,\dot{u})}\LGnorm[\infty]{\varphi}
    +
    \LGnorm[G^{\ast}]{F_v(t,u,\dot{u})}\LGnorm{\dot {\varphi}}\leq C\WLGnorm{\varphi}.
\]

To finish the proof it suffices to show that $\ISymbol'$ is continuous. Since $u_n\to u$ in
$\WLGspace$, it follows that $u_n\to u$ in $\LGspace$, $\dot{u}_n\to \dot{u}$ in $\LGspace$ and
there exists $M>0$ such that $\WLGnorm{u_n}<M$.

By Lemma  \ref{lem:untou_implies_RGuntoRGu} we have $G(\dot{u}_n)\to G(\dot{u})$ in
$\Lpspace[1](I,\R)$. Hence
there exists a subsequence $\{u_{n_k}\}$ and $h\in \Lpspace[1](I,\R)$ such that
\[
G(\dot{u}_{n_k})\to G(\dot{u}) \text{ a.e and } G(\dot{u}_{n_k})\leq h.
\]
By \ref{Fx:estMW} and since $\{u_{n_k}\}$ is bounded, we obtain
\[
|F_x(t,u_{n_k},\dot{u}_{n_k})|
\leq
\alpha(\WLGnorm{u_{n_k}})(b(t)+G(\dot{u}_{n_k})) dt
\leq
\alpha(M)(b(t)+h(t)).
\]
By \ref{F:C1MW} we have
\[
F_x(t,u_{n_k},\dot{u}_{n_k})\to F_x(t,u,\dot{u})
\]
for a.e $t\in I$. Applying Lebesgue Dominated Convergence Theorem we obtain
\[
\int_I\inner{F_x(t,u_{n_k},\dot{u}_{n_k})}{\varphi}dt\to\int_I\inner{F_x(t,u,\dot{u})}{\varphi}dt.
\]
Since this holds for any subsequence of $\{u_n\}$ we have that
\[
\int_I\inner{F_x(t,u_{n},\dot{u}_{n})}{\varphi}dt\to\int_I\inner{F_x(t,u,\dot{u})}{\varphi}dt.
\]
By \ref{Fv:estMW} and Lemma \ref{lem:FxLGFvLG*}
\[
  G^{\ast}(F_v(t,u_{n_k},\dot{u}_{n_k}))
\leq
\alpha(M)(c(t)+G^{\ast}(\nabla G(\dot{u}_{n_k}))).
\]
In the same way as in the proof of Lemma \ref{lem:convRG*nabla} we obtain
\[
G^{\ast}(F_v(t,u_{n_k},\dot{u}_{n_k})) \leq \alpha(M)(c(t)+C+K_1h(t)).
\]
By continuity of $F_v$ we obtain
\[
G^{\ast}(F_v(t,u_{n_k},\dot{u}_{n_k}))\to G^{\ast}(F_v(t,u,\dot{u}))
\]
for a.e $t\in I$ and consequently
\[
\int_IG^{\ast}(F_v(t,u_{n_k},\dot{u}_{n_k}))dt\to\int_IG^{\ast}(F_v(t,u,\dot{u}))dt.
\]
It follows that
\[
\int_IG^{\ast}(F_v(t,u_{n},\dot{u}_{n}))dt\to\int_IG^{\ast}(F_v(t,u,\dot{u}))dt.
\]
Application of Lemma \ref{lem:rieszforLG} to $R_{G^\ast}$ yields
$\LGnorm[G^{\ast}]{F_v(\cdot,u_n,\dot{u_n})-F_v(\cdot,u,\dot{u})}\to 0$. By H\"older inequality
\[
    \Big|\int_I\inner{F_v(t,u_n,\dot{u}_n)-F_v(t,u,\dot{u})}{\dot{\varphi}}\,dt \Big|\leq
2\LGnorm[G^{\ast}]{F_v(\cdot,u_n,\dot{u}_n)-F_v(\cdot,u,\dot{u})}\LGnorm{\dot{\varphi}}\to 0.
\]
Finally,
\[
\int_I\inner{F_v(t,u_{n},\dot{u}_{n})}{\dot{\varphi}}dt
\to
\int_I\inner{F_v(t,u,\dot{u})}{\dot{\varphi}}dt.
\]

\end{proof}

\bibliographystyle{elsarticle-num}
\bibliography{OSwekt}


\end{document}